\DeclareFontFamily{U}{wncy}{}
\DeclareFontShape{U}{wncy}{m}{n}{%
   <5>wncyr5%
   <6>wncyr6%
   <7>wnyr7%
   <8>wncyr8%
   <9>wncyr9%
   <10>wncyr10%
   <11>wncyr10%
   <12>wncyr6%
   <14>wncyr7%
   <17>wncyr8%
   <20>wncyr10%
   <25>wncyr10}{}
\newtheorem{thm}{Theorem}[section]
\newtheorem{lem}[thm]{Lemma}
\newtheorem{exple}[thm]{Example}
\newtheorem{cor}[thm]{Corollary}
\newtheorem{prop}[thm]{Proposition}
\newtheorem{rem}[thm]{Remark}
\title{On the number of perfect lattices}
\author{Roland Bacher\footnote{This work has been partially supported by the LabEx PERSYVAL-Lab (ANR--11-LABX-0025). The author is a member of the project-team GALOIS supported by this LabEx.}}
\begin{document}
\maketitle

{\sl Abstract\footnote{Keywords: Perfect lattice.
Math. class:  Primary: 11H55, Secondary: 11T06, 20K01, 05B30, 05E30.}: 
We show that the number $p_d$ of non-similar 
perfect $d$-dimensional 
lattices satisfies eventually the inequalities
$e^{d^{1-\epsilon}}<p_d<e^{d^{3+\epsilon}}$ for arbitrary small
strictly positive $\epsilon$.}
\vskip.5cm


\section{Main result} 
We denote by $\Lambda_{\min}$ the set of vectors having minimal non-zero 
length in an Euclidean lattice (discrete subgroup of an Euclidean
vector space). An Euclidean
lattice $\Lambda$ of rank 
$d=\dim\left(\Lambda\otimes_{\mathbb Z}\mathbb R\right)$ is \emph{perfect}
if the set $\{v\otimes v\}_{v\in\Lambda_{\min}}$
spans the full ${d+1\choose 2}$-dimensional vector space 
of all symmetric elements in $\left(\Lambda\otimes_{\mathbb Z}\mathbb R\right)\otimes_{\mathbb R} \left(\Lambda\otimes_{\mathbb Z}\mathbb R\right)$.
In the sequel, a lattice will always denote an Euclidean lattice of 
finite rank (henceforth called the dimension of the lattice). 
Every perfect lattice is similar
to an integral lattice and the number of similarity classes of 
perfect lattices of given dimension is finite, cf. for example 
\cite{M}. 
Similarity classes of perfect lattices are in one-to-one correspondence
with isomorphism classes of primitive integral perfect lattices. (A 
lattice is primitive integral if the set of all possible
scalar-products is a coprime set of integers.)
For general information on lattices, the reader can consult \cite{CS}.

The main result of this paper can be resumed as follows:

\begin{thm}\label{thmmain} For every strictly positive $\epsilon$,
the number $p_d$ of 
isomorphism classes of perfect $d$-dimensional primitive integral lattices 
satisfies eventually the inequalities 
$$e^{d^{1-\epsilon}}<p_d<e^{d^{3+\epsilon}}\ .$$
\end{thm}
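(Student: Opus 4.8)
\medskip
\noindent\textbf{Proof proposal.}
The two inequalities call for opposite techniques: the upper bound is a counting argument resting on the rigidity of perfect lattices, the lower bound an explicit construction. For the upper bound I would exploit the fact recalled above (see \cite{M}) that a perfect lattice is determined up to isometry by its minimum $m$ and its set $\Lambda_{\min}$ of minimal vectors, the Gram matrix being the unique symmetric solution of the linear system $\langle v,v\rangle=m$, $v\in\Lambda_{\min}$. Two minimal vectors in the same class modulo $2\Lambda$ are equal or opposite, since otherwise $\tfrac12(v\pm w)$ would be a nonzero vector shorter than $m$; hence the pairs $\{\pm v\}$ inject into $(\Lambda/2\Lambda)\setminus\{0\}$ and $\#\Lambda_{\min}\le 2(2^{d}-1)$. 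Moreover $\Lambda_{\min}$ spans $\Lambda\otimes_{\mathbb Z}\mathbb R$, so a perfect lattice is \emph{well rounded}: all $d$ of its successive minima equal $m$, and $d$ independent minimal vectors generate a sublattice $\Lambda_{0}\subseteq\Lambda$ of index controlled by Hadamard's and Hermite's inequalities, $[\Lambda:\Lambda_{0}]^{2}\le m^{d}/(m/\gamma_{d})^{d}=\gamma_{d}^{d}$, so that $[\Lambda:\Lambda_{0}]\le\gamma_{d}^{d/2}=d^{O(d)}$.

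A naive count already gives the weaker exponent $4$: after a $GL_{d}(\mathbb Z)$-reduction, well-roundedness forces every Gram--Schmidt norm of a reduced basis to satisfy $\|b_{k}^{*}\|^{2}\ge m\,d^{-O(d)}$, whence each minimal vector has coordinates bounded by $B=d^{O(d)}$; since a perfect form is pinned down by the $\binom{d+1}{2}$ minimal vectors whose squares form a basis of the symmetric forms, one finds $\log p_{d}\le d\binom{d+1}{2}\log(2B+1)=O(d^{4}\log d)$. To reach the exponent $3$ I would instead organize the count by determinant. By reduction theory the number of $GL_{d}(\mathbb Z)$-classes of positive definite integral forms of determinant $D$ is at most $D^{(d-1)/2}$ times a factor $\exp(O(d^{2}))$ (divisor choices and automorphism groups, the latter bounded since an automorphism permutes $\Lambda_{\min}$ and is fixed by its action on $d$ independent minimal vectors); summing over $D\le D_{\max}$ and using that perfect forms are a subset yields
\[
\log p_{d}\ \le\ \tfrac{d}{2}\,\log D_{\max}+O(d^{2}).
\]
Everything then reduces to a single quantitative input: a bound $\log D_{\max}(d)=O(d^{2+\epsilon})$ on the determinant of a perfect $d$-form (equivalently, via $m\le\gamma_{d}D^{1/d}$ and well-roundedness, a bound $\log m=O(d^{1+\epsilon})$ on its minimum), which gives $\log p_{d}=O(d^{3+\epsilon})$ as required. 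I expect this effective version of Voronoi's finiteness theorem --- a genuine polynomial-in-$d$ bound on $\log$ of the arithmetic size of perfect forms, rather than mere finiteness --- to be the main obstacle; the crude estimate coming from Cramer's rule applied to the defining linear system is far too lossy, so the size bound must be extracted from the geometry of the minimal-vector configuration.

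For the lower bound I would build an explicit family. Fixing a small dimension $n_{0}\ge 4$, in which at least two non-isometric perfect forms exist ($\mathsf A_{4}$ and $\mathsf D_{4}$ already suffice), I would amalgamate $k$ independently chosen such blocks along a controlled system of short ``straddling'' vectors, the role of the straddling minimal vectors being to span the \emph{mixed} symmetric forms between every pair of blocks --- precisely what an orthogonal direct sum fails to do, which is why direct sums of positive-dimensional lattices are never perfect. The resulting form has dimension $d=\Theta(kn_{0})$ together with a linking overhead, and I would secure pairwise non-isometry by an invariant that reads the unordered multiset of blocks off the isometry class (through the components of $\Lambda_{\min}$ cut out by the linking pattern, or through theta series). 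With $k$ of order $d^{1-\epsilon}$ independent binary choices this produces at least $2^{d^{1-\epsilon}}>e^{d^{1-\epsilon}}$ pairwise non-isometric perfect lattices.

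The delicate point of the construction, and the reason the exponent falls short of $1$, is certifying genuine perfection: one must verify that the straddling minimal vectors really do span all mixed terms without accidentally lowering the minimum inside a block, and this spanning requirement forces a superlinear linking overhead that caps the usable number of free blocks at $k=d^{1-\epsilon}$ rather than $\Theta(d)$. Thus I anticipate the two hard cores to be, for the upper bound, the effective determinant bound for perfect forms, and for the lower bound, the simultaneous verification of perfection and block-recovery for the amalgamated family.
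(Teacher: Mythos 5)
Your proposal does not close either inequality; both halves are reduced to statements that remain unproven, and in the upper-bound case the missing statement is precisely the hard part. Your route to the exponent $3$ hinges on an effective determinant bound $\log D_{\max}(d)=O(d^{2+\epsilon})$ for perfect $d$-dimensional forms (equivalently a polynomial bound on $\log m$), which you yourself flag as ``the main obstacle''; no such bound is supplied by your argument, and it does not follow from the ingredients you invoke --- Cramer's rule on the linear system $\langle v,v\rangle=m$, $v$ in a perfect set with coordinates of size $d^{O(d)}$, only yields $\log D_{\max}=O(d^{3}\log d)$, which fed back into your count $\log p_d\le\tfrac d2\log D_{\max}+O(d^2)$ reproduces the exponent $4$, not $3$. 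The paper never bounds determinants or Gram entries at all. Its key observation (Lemma \ref{lemfond}) is that if $v_1,\dots,v_d$ are $d$ independent minimal vectors chosen so that the index $I$ of $\Lambda'=\sum_i\mathbb Z v_i$ in $\Lambda$ is \emph{maximal}, then every minimal vector lies in the box $\{\sum_i x_iv_i:\ |x_i|\le 1\}$, which meets $\Lambda$ in at most $3^dI$ points. A similarity class is then determined by purely combinatorial data: one of at most $I_d^{d+1}$ overlattices of $\Lambda'$ of index $\le I_d$ (Proposition \ref{propmajperfslatt}), plus a choice of $\binom{d}{2}$ further vectors among those $\le 3^dI_d$ points completing $v_1,\dots,v_d$ to a perfect set, since prescribing equal norms on a perfect set fixes the form up to scale. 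This gives $p_d\le I_d^{d+1}\binom{3^dI_d}{\binom d2}$ (Theorem \ref{thmbound}), i.e. $\log p_d=O(d^3\log d)$ once $I_d\le\lfloor(4/\pi)^{d/2}(d/2)!\rfloor$ is known from Minkowski's theorem --- the arithmetic size of the form never enters.

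Your lower bound is likewise a plan rather than a proof: the amalgamation of $A_4/D_4$ blocks along ``straddling'' vectors is not specified, and the two points you concede as the ``hard cores'' --- that the straddling vectors span all mixed symmetric tensors without lowering the minimum, and that the multiset of blocks can be read off the isometry class --- are exactly the content that must be established. (A minor point: an exponential family of size $2^{cd}$, $c>0$, already suffices for every $\epsilon$, so nothing forces you down to $k=d^{1-\epsilon}$ blocks; the obstruction in your sketch is not the count but the missing proofs of perfection and non-isometry.) The paper resolves both issues for a concrete family $L_d(h_1,\dots,h_k)$ of minimum-$4$ lattices, namely the sublattices of $\mathbb Z^{d+2}$ orthogonal to $(1,\dots,1)$ and to a vector with ``holes'': perfection is proved by induction on $d$ using the two coordinate-hyperplane sublattices and a criterion for extending perfection (Corollary \ref{corperfcrit}, Proposition \ref{propinductionstep}, Theorem \ref{thmperf}), and pairwise non-isomorphism is proved by reconstructing the hole sequence intrinsically from the metric data, via the graph on classes of neighbours of a canonical minimal vector and an error-correction argument for $\alpha$-quasi-equivalence classes (Propositions \ref{propdistalpha}, \ref{propvdetholes}, \ref{propadmissvect}, Theorem \ref{thmautom}); the number of admissible hole sequences grows like $\rho^{d}$ with $\rho\approx 1.3247$ (Propositions \ref{propalphan}, \ref{propauxexp}), which gives the eventual inequality $p_d>e^{d^{1-\epsilon}}$.
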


Otherwise stated, there exist a largest real number $\alpha\in[1,3]$
and a smallest real number $\beta\in[\alpha,3]$ such that 
we have eventually $e^{d^{\alpha-\epsilon}}<p_d<e^{d^{\beta+\epsilon}}$.
I suspect that $1<\alpha=\beta\leq 2$. The inequality $1<\alpha$ is
suggested by the large number of known perfect forms in dimension $8$ and 
$9$ (where we lack a complete classification). I present a 
few non-rigorous arguments for $\beta\leq 2$ in Section 
\ref{sectionheuristics}.

The two inequalities of Theorem \ref{thmmain} are proved by completely 
different methods which give actually explicit lower and upper bounds for 
the numbers $p_d$. 

Lower bounds are obtained by describing an explicit
family of non-isomorphic primitive integral
perfect lattices of minimum $4$. (The minimum of a lattice
is the squared Euclidean norm of a shortest non-zero element.)
Proving perfection of the family is easy. Showing 
that it consists only of non-isomorphic elements is somewhat tedious. Our proof
is based on the fact that ``error correction'' is possible
for a symmetric and reflexive relation obtained by adding a few ``errors'' to
an equivalence relation on a finite  set. Finally, we compute the number of
lattices of dimension $d$ in the family and show that this number grows 
exponentially fast with $d$.
This first part is essentially a refined sequel of \cite{B}. Similar
methods and constructions have been used in \cite{Boe1} and \cite{Boe2}.

Constructions used in this paper and in \cite{B} yield scores of perfect 
integral lattices with minimum $4$. A complete understanding or classification
of such lattices is probably a task doomed to failure. 
Integral perfect lattices of minimum
$2$ are root lattices of type $A,D$ or $E$. They are thus rare and very 
well understood.
Perfect integral lattices of minimum $3$ sit between these two worlds. 
Is there some hope for a (at least partial) classification 
or are there already too many of them?

The upper bound for $p_d$ is also explicit, see Theorem \ref{thmbound}.
We prove it by elementary geometric and combinatorial arguments. Somewhat
weaker bounds (amounting to the eventual inequality
$p_d<e^{d^{4+\epsilon}}$) were obtained by C. Soul\'e in Section 1 of \cite{S}.

A slight modification of the proof of Theorem \ref{thmbound}
gives an upper bound for the number of $\mathrm{GL}_d(\mathbb Z)$-orbits of
$d$-dimensional symmetric lattice
polytopes containing no non-zero lattice points in their interior, 
see Section \ref{sectlattpol}.

We have tried to make this paper as elementary and self-contained as possible.
We apologise for the resulting redundancies with the existing literature.

\section{Perfection}

A subset $\mathcal P$ of a vector space $V$ 
of finite dimension $d$ over
a field $\mathbb K$ of characteristic $\not=2$ is \emph{perfect}
if the set $\{v\otimes v\}_{v\in\mathcal P}$ spans the full 
${d+1\choose 2}$-dimensional vector space 
$\sum_{v,w\in V}v\otimes w+w\otimes v$ of all
symmetric tensor-products in $V\otimes_{\mathbb K} V$. 
Perfection belongs to the realm of linear algebra:
every perfect set $\mathcal P$ contains a perfect 
subset $\mathcal S$ of ${d+1\choose 2}$
elements giving rise to a basis $\{v\otimes v\}_{v\in\mathcal S}$
of symmetric tensor products.
Moreover, every subset of $d$ linearly independent elements in 
a perfect set $\mathcal P$
can be extended to a perfect subset of ${d+1\choose 2}$ elements in $
\mathcal P$.

An example of a perfect set is given by 
$b_1,\dots,b_d,b_i+b_j,1\leq i<j\leq d$ where $b_1,\dots,b_d$ is
a basis of $V$.

Other interesting examples over subfields of real numbers
are given by $4$-designs. (We recall that a $t$-design
is a finite subset $\mathcal S$ of the Euclidean $1-$sphere $\mathbb S^{d-1}$
such that the mean value over $\mathcal S$ of any polynomial of degree at most
$t$ equals the mean value of the polynomial over the unit sphere).
See \cite{Ne} for a survey of B. Venkov's work who discovered
relations between $4$-designs and perfect lattices.

Given a perfect set $\mathcal P$ with ${d+1\choose 2}$ elements in
a $d$-dimensional vector space over a field $\mathbb K$ 
of characteristic $\not=2$, any map $\nu:\mathcal P\longrightarrow \mathbb K$ 
corresponds to a unique bilinear
product $\langle\quad  ,\quad \rangle$ on
$V\times V$ such that 
$\nu(v)=\langle v,v\rangle$ for all $v\in \mathcal P$.
Such a bilinear product is in general not positively defined over a 
real field. Equivalently, a homogeneous quadratic form $q:
V\longmapsto \mathbb K$ is completely defined by its restriction 
to a perfect set.

Perfect lattices are lattices whose set of 
minimal vectors (shortest non-zero elements)
determines the Euclidean structure completely, 
up to similarity. Examples of perfect lattices are given e.g. by
lattices whose minimal vectors form a $4$-design. 
This shows perfection of root lattices of type $A,D,E$, of 
the Leech lattice and of many other interesting lattices with large
automorphism groups.

A useful tool for proving perfection of a set is the following easy Lemma
(see Proposition 3.5.5 in \cite{M} or Proposition 1.1 in \cite{B} for a proof): 

\begin{lem}\label{lemfondperf} Suppose that a set $\mathcal S$ of a vector 
space $V$ intersects a hyperplane $\mathcal H$ (linear subspace of 
codimension $1$) in a perfect subset $\mathcal S'=S\cap \mathcal H$ of
$\mathcal H$ and that $\mathcal S\setminus\mathcal S'$ spans $V$.
Then $\mathcal S$ is perfect in $V$.
\end{lem}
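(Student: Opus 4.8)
The plan is to use the dual characterisation of perfection. Since $\{v\otimes v\}_{v\in\mathcal P}$ spans the space of symmetric tensors if and only if no nonzero element of the dual space annihilates all of them, and since the dual of the space of symmetric tensors is exactly the space of symmetric bilinear forms $b$ on $V$ (with $v\otimes v$ pairing to $b(v,v)$), a set $\mathcal P$ is perfect in $V$ precisely when the only symmetric bilinear form $b$ on $V$ satisfying $b(v,v)=0$ for all $v\in\mathcal P$ is $b=0$. Thus I would reduce the lemma to the following rigidity statement: a symmetric bilinear form $b$ on $V$ vanishing on $\{(v,v):v\in\mathcal S\}$ must be identically zero.

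So let $b$ be such a form. First I restrict $b$ to $\mathcal H$: it then vanishes on $(v,v)$ for all $v\in\mathcal S'=\mathcal S\cap\mathcal H$, and since $\mathcal S'$ is perfect in $\mathcal H$ the dual characterisation (applied inside $\mathcal H$) forces $b$ to vanish on $\mathcal H\times\mathcal H$. Fix now a vector $e\in V\setminus\mathcal H$, so that $V=\mathcal H\oplus\mathbb K e$, and set $\ell(h)=b(h,e)$ for $h\in\mathcal H$ and $c=b(e,e)$; since $b$ already vanishes on $\mathcal H\times\mathcal H$, it will follow that $b=0$ as soon as I show $\ell=0$ and $c=0$. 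For any $v=h_v+\lambda_v e\in\mathcal S\setminus\mathcal S'$ (so $\lambda_v\neq 0$), expanding by bilinearity and symmetry gives $0=b(v,v)=b(h_v,h_v)+2\lambda_v\ell(h_v)+\lambda_v^2 c=\lambda_v\bigl(2\ell(h_v)+\lambda_v c\bigr)$, and dividing by $\lambda_v\neq 0$ shows that the linear functional $L$ on $V$ defined by $L(h+\lambda e)=2\ell(h)+\lambda c$ vanishes at $v$.

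The spanning hypothesis now does the work: because $\mathcal S\setminus\mathcal S'$ spans $V$ and $L$ vanishes on this set, $L$ is identically zero; evaluating $L$ on $\mathcal H$ yields $2\ell=0$, hence $\ell=0$ using that the characteristic is not $2$, and evaluating at $e$ yields $c=0$. Therefore $b=0$, which is exactly perfection of $\mathcal S$. I expect the only real subtlety to be the initial dual reformulation together with the use of $\mathrm{char}\,\mathbb K\neq 2$ in the polarisation step; once the problem is phrased in terms of a vanishing bilinear form, the restriction to $\mathcal H$ and the passage to the auxiliary functional $L$ turn the two hypotheses (perfection of $\mathcal S'$ in $\mathcal H$ and spanning of $V$ by $\mathcal S\setminus\mathcal S'$) into the two facts $\ell=0$ and $c=0$ in an essentially mechanical way.
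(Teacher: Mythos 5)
Your proof is correct and complete: the dual characterisation of perfection (the only symmetric bilinear form vanishing on the squares of a perfect set is zero) is valid in characteristic $\neq 2$, the restriction to $\mathcal H$ kills $b$ on $\mathcal H\times\mathcal H$ by perfection of $\mathcal S'$, and the identity $b(v,v)=\lambda_v\bigl(2\ell(h_v)+\lambda_v c\bigr)$ together with the spanning hypothesis forces the auxiliary functional $L$, hence $\ell$ and $c$, to vanish. Note that the paper does not prove this lemma at all — it refers to Proposition 3.5.5 of \cite{M} and Proposition 1.1 of \cite{B} — and your argument is exactly the standard one behind those references (your $L$ is the explicit cofactor in the factorisation of the quadratic form $q=b(\cdot,\cdot)$, which vanishes on $\mathcal H$, as the equation of $\mathcal H$ times a linear form), so it supplies the missing proof rather than diverging from the paper.
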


The condition on the span of $\mathcal S\setminus\mathcal S'$
in Lemma \ref{lemfondperf} is necessary as shown by the 
following easy result whose proof is left to the reader:

\begin{lem}\label{lemfonda} Given a linear hyperplane $\mathcal H$ 
of a vector space $V$, any perfect set of $V$ contains a basis
of $V$ not intersecting $\mathcal H$.
\end{lem}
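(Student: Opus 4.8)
The plan is to prove the slightly stronger statement that the subset $\mathcal P\setminus\mathcal H$ of those vectors of a perfect set $\mathcal P$ that avoid $\mathcal H$ already spans $V$; extracting from it a maximal linearly independent subfamily then yields a basis of $V$ contained in $\mathcal P$ and disjoint from $\mathcal H$, which is exactly the assertion.

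First I would argue by contradiction and suppose that $\mathcal P\setminus\mathcal H$ fails to span $V$. Its linear span is then contained in some hyperplane $\mathcal H'$ of $V$, so every element of $\mathcal P$ lies in $\mathcal H\cup\mathcal H'$. Choosing non-zero linear forms $\ell$ and $\ell'$ with respective kernels $\mathcal H$ and $\mathcal H'$, the product $q=\ell\,\ell'$ is a homogeneous quadratic form on $V$ vanishing at every $v\in\mathcal P$, since each such $v$ is annihilated by at least one of $\ell,\ell'$.

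The key point is that $q$ is not the zero form: it is a product of two non-zero linear forms inside the polynomial algebra over the field $\mathbb K$, which is an integral domain, so $q\neq 0$. Since $\mathrm{char}\,\mathbb K\neq 2$, the form $q$ corresponds to a non-zero symmetric tensor, equivalently to a non-zero linear functional on the space of symmetric tensors sending $v\otimes v$ to $q(v)$. By perfection the family $\{v\otimes v\}_{v\in\mathcal P}$ spans this whole space, so any functional vanishing on all of its members is zero; phrased as in the discussion above, a homogeneous quadratic form is completely determined by its restriction to a perfect set, whence $q|_{\mathcal P}=0$ forces $q=0$. This contradicts $q\neq 0$.

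Hence $\mathcal P\setminus\mathcal H$ spans $V$, and any $d=\dim V$ linearly independent vectors chosen from it form a basis of $V$ all of whose members lie outside $\mathcal H$. I expect the only genuinely delicate point to be the passage from the combinatorial hypothesis that every element of $\mathcal P$ lies in $\mathcal H\cup\mathcal H'$ to a single non-zero quadratic form vanishing on $\mathcal P$; once the factorisation $q=\ell\,\ell'$ is in hand, the hypothesis $\mathrm{char}\,\mathbb K\neq 2$ and the defining property of perfection close the argument at once.
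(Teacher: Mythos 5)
Your proof is correct. There is nothing in the paper to compare it with: the author states Lemma \ref{lemfonda} as an ``easy result whose proof is left to the reader''. Your argument is surely the intended one: if $\mathcal P\setminus\mathcal H$ did not span $V$, it would lie in a hyperplane $\mathcal H'=\ker\ell'$, and then $q=\ell\,\ell'$ (with $\mathcal H=\ker\ell$) would be a quadratic form vanishing on all of $\mathcal P$; since a quadratic form is determined by its restriction to a perfect set (as the paper notes just before the lemma), $q$ would have to be the zero form, contradicting $\ell\,\ell'\neq 0$ in the polynomial algebra, which is an integral domain. You also correctly isolate the one delicate point: the contradiction needs $q\neq 0$ as a \emph{formal} form, i.e.\ as a linear functional on symmetric tensors acting by $v\otimes v\mapsto q(v)$, not merely as a function on $V$ (a distinction that matters over small finite fields); in characteristic $\neq 2$ the symmetrization $\tfrac12(\ell\otimes\ell'+\ell'\otimes\ell)$ of a product of two nonzero linear forms is indeed nonzero, so the perfection hypothesis yields the contradiction exactly as you describe.
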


\begin{prop}\label{propfond} Let $\mathcal S$ be a subset of a vector 
space $V$ intersecting two distinct hyperplanes $\mathcal H'$, 
$\mathcal H''$ in two perfect subsets 
$\mathcal S'=\mathcal S\cap \mathcal H'$, $\mathcal S''=\mathcal S\cap \mathcal H''$, of $\mathcal H'$, respectively $\mathcal H''$. Suppose moreover 
that $\mathcal S\setminus(\mathcal S'\cup \mathcal S'')$ 
is non-empty. Then $\mathcal S$ is perfect in $V$.
\end{prop}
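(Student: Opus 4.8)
The plan is to deduce the statement from Lemma \ref{lemfondperf}, applied with the hyperplane $\mathcal H'$. The two hypotheses to check are that $\mathcal S'=\mathcal S\cap\mathcal H'$ is perfect in $\mathcal H'$, which is part of the assumptions, and that $\mathcal S\setminus\mathcal S'$ spans $V$. Only the second point requires work: once it is established, Lemma \ref{lemfondperf} immediately yields perfection of $\mathcal S$ in $V$.

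To verify the spanning condition, I first recall that a perfect subset of a space necessarily spans that space (if the $v$ spanned a proper subspace $W$, the tensors $v\otimes v$ would lie in the proper subspace of symmetric tensors of $W$), so $\mathcal S''$ spans the hyperplane $\mathcal H''$. The difficulty is that $\mathcal S\setminus\mathcal S'$ discards every element of $\mathcal S$ lying on $\mathcal H'$, and a priori some elements of $\mathcal S''$ lie in $\mathcal H'$, namely those in $\mathcal H'\cap\mathcal H''$. Here Lemma \ref{lemfonda} is the key tool. Since $\mathcal H'\neq\mathcal H''$, the intersection $\mathcal H'\cap\mathcal H''$ is a hyperplane of $\mathcal H''$, and since $\mathcal S''$ is perfect in $\mathcal H''$, Lemma \ref{lemfonda} produces a basis $b_1,\dots,b_{d-1}$ of $\mathcal H''$ contained in $\mathcal S''$ and avoiding $\mathcal H'\cap\mathcal H''$. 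As these vectors lie in $\mathcal H''$ but not in $\mathcal H'\cap\mathcal H''$, none of them lies on $\mathcal H'$; hence $b_1,\dots,b_{d-1}\in\mathcal S''\setminus\mathcal S'\subseteq\mathcal S\setminus\mathcal S'$, and they span $\mathcal H''$.

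It then remains to escape the hyperplane $\mathcal H''$. I pick any $w\in\mathcal S\setminus(\mathcal S'\cup\mathcal S'')$, which exists by hypothesis. Then $w\notin\mathcal H''$, since otherwise $w\in\mathcal S\cap\mathcal H''=\mathcal S''$, and likewise $w\notin\mathcal H'$, since otherwise $w\in\mathcal S\cap\mathcal H'=\mathcal S'$; in particular $w\in\mathcal S\setminus\mathcal S'$. Because $w\notin\mathcal H''=\mathrm{span}(b_1,\dots,b_{d-1})$, the $d$ vectors $b_1,\dots,b_{d-1},w$ are linearly independent and therefore span $V$. All of them belong to $\mathcal S\setminus\mathcal S'$, which consequently spans $V$, completing the verification and hence, via Lemma \ref{lemfondperf}, the proof.

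The only genuinely nontrivial step is the spanning claim: the naive attempt to use that $\mathcal S''$ spans $\mathcal H''$ fails because passing to $\mathcal S\setminus\mathcal S'$ may delete exactly the elements of $\mathcal S''$ sitting in $\mathcal H'\cap\mathcal H''$. Lemma \ref{lemfonda} is precisely what guarantees that a perfect set contains enough vectors off any prescribed hyperplane to repair this, so I expect that to be where the argument genuinely relies on perfection of $\mathcal S''$ rather than merely on its spanning $\mathcal H''$.
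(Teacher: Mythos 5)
Your proof is correct and follows essentially the same route as the paper: both apply Lemma \ref{lemfonda} to the perfect set $\mathcal S''$ inside $\mathcal H''$ (with the hyperplane $\mathcal H'\cap\mathcal H''$) to extract a basis of $\mathcal H''$ lying in $\mathcal S\setminus\mathcal S'$, adjoin an element of $\mathcal S\setminus(\mathcal S'\cup\mathcal S'')$ to get a basis of $V$ inside $\mathcal S\setminus\mathcal S'$, and then conclude by Lemma \ref{lemfondperf} applied to $\mathcal H'$. Your write-up merely makes explicit some details the paper leaves implicit (e.g., that $w$ avoids both hyperplanes).
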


Proposition \ref{propfond} implies
immediately the following result for lattices:

\begin{cor}\label{corperfcrit} Let $\Lambda$ be a lattice of 
minimum $m$ and rank $d$ containing two distinct perfect
sublattices $\Lambda'$ and $\Lambda''$ which are both 
of minimum $m$ and rank $d-1$.
Suppose moreover that $\Lambda'+\Lambda''$ is of rank $d$ 
(or, equivalently, of finite index in $\Lambda$) and that $\Lambda$ 
contains a minimal vector $v$ such that $\mathbb Q v\cap \Lambda'=
\mathbb Qv\cap\Lambda''=\{0\}$. Then $\Lambda$ is perfect.
\end{cor}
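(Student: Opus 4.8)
The plan is to apply Proposition \ref{propfond} to the set $\mathcal S=\Lambda_{\min}$ of minimal vectors of $\Lambda$, viewed inside the real vector space $V=\Lambda\otimes_{\mathbb Z}\mathbb R$, taking as the two hyperplanes $\mathcal H'=\Lambda'\otimes_{\mathbb Z}\mathbb R$ and $\mathcal H''=\Lambda''\otimes_{\mathbb Z}\mathbb R$. Since $\Lambda'$ and $\Lambda''$ have rank $d-1$, these are genuine hyperplanes of $V$, and they are distinct: if $\mathcal H'=\mathcal H''$ then $\Lambda'+\Lambda''$ would lie in a single hyperplane and hence have rank at most $d-1$, contradicting the hypothesis that it has rank $d$. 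It then remains to verify the three hypotheses of the Proposition, namely that $\mathcal S\cap\mathcal H'$ is perfect in $\mathcal H'$, that $\mathcal S\cap\mathcal H''$ is perfect in $\mathcal H''$, and that $\mathcal S\setminus\big((\mathcal S\cap\mathcal H')\cup(\mathcal S\cap\mathcal H'')\big)$ is non-empty.

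For the first two conditions I would argue as follows. A minimal vector of $\Lambda'$ is a vector of $\Lambda'$ of squared length $m$; since $\Lambda'\subseteq\Lambda$ and both lattices have minimum $m$, every such vector is also a minimal vector of $\Lambda$, so $\Lambda'_{\min}\subseteq\mathcal S\cap\mathcal H'$. Perfection of $\Lambda'$ means precisely that $\Lambda'_{\min}$ is a perfect subset of $\mathcal H'$. Because enlarging a perfect set can only enlarge the span of the associated symmetric tensors, the bigger set $\mathcal S\cap\mathcal H'$ is again perfect in $\mathcal H'$; the identical reasoning applies to $\mathcal H''$.

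The only delicate step, and the one I expect to be the main obstacle, is the non-emptiness condition, since this is where the purely arithmetic hypothesis on $v$ must be converted into the geometric statement that $v$ avoids both hyperplanes. I claim that $\mathbb Qv\cap\Lambda'=\{0\}$ forces $v\notin\mathcal H'$. Indeed, $\Lambda\cap\mathcal H'$ is a sublattice of $\Lambda$ contained in the hyperplane $\mathcal H'$, so it has rank at most $d-1$; as it contains the rank-$(d-1)$ lattice $\Lambda'$, it has rank exactly $d-1$, and $\Lambda'$ sits inside it with finite index. Consequently, were $v$ to lie in $\mathcal H'$, it would lie in $\Lambda\cap\mathcal H'$, so some positive integer multiple $nv$ would lie in $\Lambda'$, giving a non-zero element of $\mathbb Qv\cap\Lambda'$ and a contradiction. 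Hence $v\notin\mathcal H'$, and symmetrically $v\notin\mathcal H''$, so the minimal vector $v$ lies in neither hyperplane and the third condition holds. Proposition \ref{propfond} then gives that $\mathcal S=\Lambda_{\min}$ is perfect in $V$, which is exactly the assertion that $\Lambda$ is perfect.
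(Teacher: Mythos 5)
Your proof is correct and follows exactly the route the paper intends: the paper derives Corollary \ref{corperfcrit} immediately from Proposition \ref{propfond}, and your writeup simply makes explicit the routine verifications (distinctness of the hyperplanes $\Lambda'\otimes_{\mathbb Z}\mathbb R$ and $\Lambda''\otimes_{\mathbb Z}\mathbb R$, perfection of the intersections via $\Lambda'_{\min}\subseteq\mathcal S\cap\mathcal H'$, and the translation of $\mathbb Qv\cap\Lambda'=\{0\}$ into $v\notin\mathcal H'$) that the paper leaves to the reader.
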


\begin{proof}[Proof of Proposition \ref{propfond}]
Let $d+1$ be the dimension of $V$.
Lemma \ref{lemfonda} and perfection of $\mathcal S''$ in $\mathcal H''$ 
imply that $\mathcal S''\setminus \mathcal S'$ contains 
a basis $b_1,\dots,b_d$ of $\mathcal H''$. Adding an element 
$b_0\in \mathcal S\setminus\{\mathcal S'
\cup \mathcal S''\}$ to such a basis, we get a basis $b_0,\dots,b_d\subset 
\mathcal S\setminus \mathcal S'$. 
Lemma \ref{lemfondperf} implies perfection of $\mathcal S$ 
in $V$. 
\end{proof}

\section{The lattices $L_d(h_1,h_2,\dots)$: Construction and Results}

Given a strictly increasing sequence $1\leq h_1<h_2<\dots$ of integers,
we denote by $L_d(h_1,h_2,\dots)$ the lattice of all integral vectors in 
$\mathbb Z^{d+2}$ which are orthogonal to $c=(1,1,1,\dots,1,1)\in \mathbb Z^{d+2}$
and to the vector $h=(1,2,\dots,h_1-1,\widehat{h_1},h_1+1,\dots,\widehat{h_2},\dots)\in\mathbb Z^{d+2}$ with strictly 
increasing coordinates given by the $(d+2)$ smallest
elements of $\{1,2,3,4,\dots\}\setminus\{h_1,h_2,h_3,\dots\}$.
We think of the missing coefficients $h_1,h_2,\dots$ as ``holes'' or
``forbidden indices''. Indeed, the lattice $L_d=L_d(h_1,\dots)$ is also 
the set of all vectors $(x_1,x_2,\dots)$ with finite support and 
zero coordinate-sum $\sum_{i=1,2,\dots}x_i=0$ (defining the enumerably 
infinite-dimensional root lattice $A_\infty$ of type $A$) 
such that $\sum ix_i=0$ and non-zero coordinates have indices 
among the first $d+2$ elements 
of $\{1,2,\dots\}\setminus\{h_1,h_2,\dots\}=\{1,2,\dots,\widehat{h_1},\dots\}$.
Equivalently, $L_d$ is the sublattice of $\mathbb Z^{\{1,2,\dots\}}$
supported by the $d+2$ smallest possible 
indices 
such that $L_d$ is orthogonal
to $c=(1,1,1,1,\dots)$ and $h=(1,2,3,4,\dots)$ (the elements $c$ and $h$ 
belong of course only 
to the ``dual lattice'' $\mathbb Z^{\{1,2,\dots\}}$ of the ``infinite dimensional lattice''
$\mathbb Z^{\infty}$ generated by an enumerable orthogonal basis) and $L_d$ is also orthogonal 
to $b_{h_1},b_{h_2},\dots$ with 
$b_i$ denoting the $i$-th element $(0,0,\dots,0,1,0,\dots)$ of the standard
basis of $\mathbb Z^\infty$.

In the sequel, we will often use the notation $\sum_i \alpha_ib_i$
with indices $i\in\{1,2,\dots\}\setminus\{i_1,i_2,\dots\}$ 
corresponding to coefficients 
of $h=(1,2,\dots,h_1-1,\widehat{h_1},h_1+1,\dots)\in\mathbb Z^{d+2}$
when working with elements of $L_d(h_1,\dots)$.

The lattice $L_d(h_1,h_2,\dots)$ is even integral of dimension $d$ with no
elements of (squared euclidean) norm $2$. It is the sublattice of $\mathbb Z^{d+2}$
orthogonal to $\mathbb Zc+\mathbb Zh$ which is a full two-dimensional 
sublattice of $\mathbb Z^{d+2}$ except if $\{1,2,\dots\}\setminus
\{h_1,h_2,\dots\}$ is an arithmetic progression (a case
which will henceforth always be excluded). 
The squared volume $\mathrm{vol}((L_d(h_1,\dots)\otimes_{\mathbb Z}\mathbb R)/
L_d(h_1,\dots)^2)$ of a fundamental domain, 
also called the determinant of $L_d(h_1,h_2,\dots)$, 
equals thus 
$\langle c,c\rangle\langle h,h\rangle-\langle c,h\rangle^2$.

\begin{thm}\label{thmperf} 
$L_d(h_1,h_2,\dots)$ is perfect of minimum $4$ if 
$d\geq 10$ and $h_{i+1}-h_i\geq 6$ for all $i$.
\end{thm}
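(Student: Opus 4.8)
The plan is to treat the two assertions separately, first pinning down the minimum and then proving perfection by induction on the dimension. I would dispose of the minimum first. Every nonzero vector of $L_d(h_1,h_2,\dots)$ lies in the root lattice $A_\infty$, whose norm-$2$ vectors are exactly the differences $b_i-b_j$ of distinct basis vectors; since $\langle b_i-b_j,h\rangle=i-j\neq 0$ for distinct allowed indices, no such vector survives the orthogonality to $h$, so the minimum is at least $4$. For the reverse inequality I would exhibit one norm-$4$ vector: the condition $h_{k+1}-h_k\geq 6$ guarantees a run of at least four consecutive allowed indices among the smallest ones (before the first hole, or between two consecutive holes, there sit at least five consecutive integers of the allowed set), and four consecutive allowed indices $n,n+1,n+2,n+3$ produce $b_n+b_{n+3}-b_{n+1}-b_{n+2}\in L_d$ because $n+(n+3)=(n+1)+(n+2)$. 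Hence the minimum equals $4$, and the full set of minimal vectors is precisely the $\pm(b_a+b_b-b_c-b_d)$ indexed by disjoint pairs $\{a,b\},\{c,d\}$ of allowed indices with $a+b=c+d$.

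For perfection I would induct on $d\geq 10$ via Corollary \ref{corperfcrit}. Writing $i_1<\dots<i_{d+2}$ for the allowed indices carrying $L_d$, I take the two hyperplane sections $\Lambda'=L_d\cap\{x_{i_{d+2}}=0\}$ and $\Lambda''=L_d\cap\{x_{i_1}=0\}$ obtained by deleting the largest, respectively the smallest, supporting index. Each is supported on an interval of $d+1$ consecutive allowed indices, so after translating the restricted covector $h$ by a multiple of $c$ (which does not change the lattice) and relabelling from $1$, each is again a member $L_{d-1}(\dots)$ of the family whose consecutive holes are still at distance $\geq 6$ — unless the surviving indices collapse to an arithmetic progression, the single degenerate case, which I postpone. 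Away from that case both sections have rank $d-1$ and, as $d-1\geq 10$, are perfect of minimum $4$ by the inductive hypothesis; they are distinct, and since $x_{i_1}$ and $x_{i_{d+2}}$ restrict to distinct nonzero forms on $L_d\otimes\mathbb{R}$ their spans are distinct hyperplanes, whence $\Lambda'+\Lambda''$ has full rank $d$. For the connecting vector required by Corollary \ref{corperfcrit} — a minimal vector meeting neither $\Lambda'\otimes\mathbb{R}$ nor $\Lambda''\otimes\mathbb{R}$, i.e.\ with nonzero $i_1$- and $i_{d+2}$-coordinates — I would solve $c+d=i_1+i_{d+2}$ in allowed indices $i_1<c<d<i_{d+2}$ and set $v=b_{i_1}+b_{i_{d+2}}-b_c-b_d$. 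Letting $(c,d)=(i_1+t,i_{d+2}-t)$ run over $t\geq 1$, a value fails only if $i_1+t$ or $i_{d+2}-t$ is a hole; the holes in $(i_1,i_{d+2})$ number at most $(i_{d+2}-i_1)/6+1$, far fewer than the $\approx(i_{d+2}-i_1)/2\geq(d+1)/2$ admissible $t$, so a valid pair survives and Corollary \ref{corperfcrit} delivers perfection.

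The main obstacle is exactly the degenerate case postponed above, where deleting an endpoint leaves an arithmetic progression — a genuine threat, since such lattices fail to be perfect (already the rank-$2$ lattice orthogonal to $(1,1,1,1)$ and $(1,2,3,4)$ carries, up to sign, a single minimal vector). I would control it by counting the holes lying among $i_1,\dots,i_{d+2}$: because consecutive holes differ by at least $6$, at most one hole can sit in either open end-gap $(i_1,i_2)$ or $(i_{d+1},i_{d+2})$, so as soon as two holes fall inside the window neither deletion can remove them all and both sections remain non-degenerate. The deletions can therefore degenerate only for a \emph{single} in-window hole pressed against an endpoint; the gap bound pins these down to a bounded list of near-arithmetic configurations per dimension, which I would settle by hand, keeping the one good section and invoking Lemma \ref{lemfondperf} to supply the missing direction. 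The induction finally bottoms out at $d=10$, where — at most three holes being possible among the first twelve allowed indices — $L_{10}(h_1,h_2,\dots)$ depends only on the holes below index $15$, leaving finitely many configurations to verify directly.
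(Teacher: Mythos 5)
Your overall strategy --- induction on $d$ using the two coordinate-hyperplane sections obtained by deleting the smallest and the largest supporting index, a connecting minimal vector with nonzero first and last coordinates found by a pigeonhole count, and a finite verification at the base --- is exactly the paper's proof (Proposition \ref{propinductionstep} together with a machine-checked base case). The genuine gap lies in your ``degenerate case''. Your premise that a section whose support consists of $d+1$ consecutive integers is ``a genuine threat, since such lattices fail to be perfect'' is false: the rank-$2$ lattice orthogonal to $(1,1,1,1)$ and $(1,2,3,4)$ has a single pair of minimal vectors only because its rank is $2$; for rank $\geq 9$ the hole-free lattice \emph{is} perfect (it is the first entry of the paper's dimension-$9$ table). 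More to the point, a section supported on consecutive integers is, after the shift you yourself describe, just $L_{d-1}$ with \emph{no} holes in its window, i.e.\ an ordinary member of the family for which the hypothesis $h_{i+1}-h_i\geq 6$ holds vacuously; it is therefore covered by the induction hypothesis and needs no special treatment at all. (The paper's exclusion of ``arithmetic progressions'' concerns progressions of step $\geq 2$, for which $\mathbb{Z}c+\mathbb{Z}h$ fails to be saturated; such supports cannot occur when consecutive holes differ by at least $6$.)

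Because the premise is wrong, your workaround does not close the case it creates. You assert that a deletion can degenerate ``only for a single in-window hole pressed against an endpoint'', overlooking that with \emph{zero} in-window holes both deletions leave arithmetic progressions; then there is no ``good section'' to keep, and your fallback (one perfect section plus Lemma \ref{lemfondperf}) has nothing to run on. This is precisely the lattice $L_d(\emptyset)$, which satisfies the hypotheses of Theorem \ref{thmperf} for every $d\geq 10$ and is thus left unproven in every dimension by your written argument. Moreover the fallback itself is not an argument: the one-hole-near-an-endpoint configurations occur in every dimension $d$, so ``settle by hand \ldots per dimension'' amounts to infinitely many unverified claims, and the spanning hypothesis of Lemma \ref{lemfondperf} is never checked. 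The repair is simply to delete the case distinction, after which your induction step coincides with the paper's. Two further remarks: your base case at $d=10$ is legitimate but is in practice a machine computation, exactly as in the paper; and it genuinely cannot be pushed down to $d=9$, since $L_9(4,10)$ satisfies the gap condition yet is not perfect --- this is the one real subtlety in the paper's proof, which your write-up does not notice (harmlessly, since you start at $d=10$).
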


Given a lattice $L_d(h_1,h_2,\dots)\subset \mathbb Z^{d+2}$ orthogonal 
to $\mathbb Zc+\mathbb Zh$ 
(with $c=(1,1,\dots,1)$ and 
$h=(1,2,3,\dots,\widehat{h_1},\dots,\omega-2,\omega-1,\omega)$) 
we get an isometric lattice of the same form by
considering $L_d(h'_1,h'_2)$ where $\{h'_1,h'_2,\dots\}=
\{\omega+1-h_1,\omega+1-h_2,\dots\}\cap\{1,2,3,\dots\}$.
Indeed, we have $h'=(\omega+1)(1,1,1,\dots,1)-h=(1,2,\dots,\widehat{h'_1},\dots)$,
up to a permutation of coordinates.
We call two such lattices \emph{essentially isomorphic}.
Essentially isomorphic lattices are related
by a suitable affine reflection of their holes. An example 
of two essentially isomorphic lattices is given by
$$L_6(2,5,6,9)=\mathbb Z^8\cap \big(\mathbb Z(1,1,1,1,1,1,1,1)+
\mathbb Z(1,3,4,7,8,10,11,12)\big)^\perp$$ and 
$$L_6(4,7,8,11)=
\mathbb Z^8\cap \big(\mathbb Z(1,1,1,1,1,1,1,1)+
\mathbb Z(1,2,3,5,6,9,10,12)\big)^\perp.$$

The following result yields a large family of non-isomorphic lattices:

\begin{thm}\label{thmautom} If two lattices 
$L_d(h_1,\dots,h_k=d+k+1)$ and $L_d(h'_1,\dots,h'_{k'}=d+k'+1)$ 
of the same dimension $d\geq 46$ satisfy the conditions $h_1,h'_1\geq 7, 
\ h_k=d+k+1,\ h'_{k'}=d+k'+1$ and 
$h_{i+1}-h_i,h'_{i+1}-h'_i\geq 4$ for all $i$, then they are isomorphic
if and only if $k'=k$ and $h'_i=h_i$ for all $i$.
\end{thm}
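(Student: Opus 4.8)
The plan is to reduce the isomorphism problem to a purely combinatorial statement about the additive structure of the index set, and then to prove a rigidity statement for that structure; only the implication ``isomorphic $\Rightarrow$ equal holes'' needs work, the converse being trivial. Write $A=A(h)=\{1,2,\dots,d+k+2\}\setminus\{h_1,\dots,h_k\}$ for the set of $d+2$ allowed indices (the normalisation $h_k=d+k+1$ makes $A$ the interval from $1$ to $d+k+2$ minus the $k$ holes). Since $L_d(h)$ has minimum $4$ (Theorem \ref{thmperf}) and its vectors have zero coordinate-sum, its minimal vectors are exactly the $\pm(b_i+b_j-b_p-b_q)$ with $i,j,p,q\in A$ pairwise distinct and $i+j=p+q$; thus minimal vectors are in bijection with the \emph{additive quadruples} of $A$. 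Any isomorphism $L_d(h)\to L_d(h')$ extends to an ambient isometry carrying minimal vectors to minimal vectors and preserving all inner products, so it suffices to prove that the abstract inner-product configuration of the minimal vectors determines the ordered hole vector $(h_1,\dots,h_k)$.

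First I would read off the additive structure of $A$ from the configuration. A direct computation shows that for minimal vectors $v\neq\pm w$ one has $\langle v,w\rangle\in\{-2,-1,0,1,2\}$: the value $\pm2$ occurs when $v$ and $w$ share a coordinate-pair of equal sum, the value $\pm1$ when they share a single coordinate, while $\pm3$ is impossible because agreement on three signed coordinates forces, via $i+j=p+q$, agreement on the fourth. Reconstructing the \emph{pairs} $\{x,y\}\subset A$ with $r(x+y)\ge 2$ (where $r(s)$ counts pairs of distinct elements of $A$ summing to $s$) as the shared sub-objects of the $\pm2$-related minimal vectors, and reconstructing the incidence ``two pairs meet in one index'' from the $\pm1$-related vectors, recovers the whole additive (Freiman order $2$) type of $A$ as an isomorphism invariant --- \emph{provided} one can separate genuine sharings from accidental ones.

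This separation is the heart of the matter. The relation ``$\langle v,w\rangle=\pm2$'' is symmetric and reflexive and, restricted to a fixed sum $s$, has connected classes $P(s)$ (the shared-pair graph on the quadruples of a common sum is the line graph of a complete graph, hence connected once $r(s)\ge2$), so its transitive closure is the equivalence relation ``same sum''. However, a bounded number of \emph{accidental} $\pm2$-coincidences between quadruples of different sums (of the type $b_x+b_u-b_q-b_t$ sharing $+b_x$ and $-b_q$ with $b_x+b_y-b_p-b_q$ when $u\neq y$) perturb this equivalence relation. The main obstacle is to show that these errors are sparse enough to be corrected: using the gap condition $h_{i+1}-h_i\ge4$, the length $h_1-1\ge6$ of the first run, and $d\ge46$ to guarantee that the genuine same-sum classes dominate, one recovers the clean equivalence relation, and with it the difference structure $\{(x,x+\delta):x,\,x+\delta\in A\}$ of $A$. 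This is the ``error correction for a symmetric reflexive relation obtained by adding a few errors to an equivalence relation'' announced in the introduction, and I expect it to be the most laborious step.

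It then remains to run two rigidity arguments. First, a Freiman order-$2$ isomorphism $\sigma\colon A\to A'$ between two intervals-minus-sparse-holes must be affine, $\sigma(x)=\pm x+c$: because $A$ fills an interval up to a sparse set, the recovered difference relation pins down the linear order up to reversal and translation. Second, I normalise. As $h_1,h'_1\ge7$ we have $1=\min A=\min A'$, so an increasing $\sigma$ satisfies $\sigma(1)=1$, whence $c=0$, $\sigma=\mathrm{id}$ and $A=A'$; this forces $d+k+2=d+k'+2$, i.e.\ $k=k'$, and $h_i=h'_i$ for all $i$. A decreasing $\sigma$ would force $k=k'$ and equal the reflection $x\mapsto(d+k+3)-x$, sending the largest hole $h_k=d+k+1$ to $2$ and hence forcing $2$ to be a hole of $A'$, contradicting $h'_1\ge7$. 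Thus only the identity survives, and the two lattices are isomorphic exactly when their hole vectors coincide.
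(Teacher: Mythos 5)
Your reduction is sound as far as it goes: the minimal vectors of $L_d(h_1,\dots)$ are indeed exactly the $\pm(b_i+b_j-b_p-b_q)$ with $i+j=p+q$ (though this follows from the lattice being even with no vectors of norm $2$, not from Theorem \ref{thmperf}, whose gap-$6$ hypothesis is not available under the hypotheses of Theorem \ref{thmautom}), inner products of non-proportional minimal vectors do lie in $\{0,\pm1,\pm2\}$, and your final affine-rigidity and normalisation step correctly disposes of the reflection (``essential isomorphism'') using $h_1,h'_1\geq 7$ and $h_k=d+k+1$. The genuine gap is the step you yourself flag as the heart of the matter, and it is not merely laborious --- it fails as you propose to do it. You claim the accidental $\pm2$-coincidences are sparse, so that ``the genuine same-sum classes dominate'' and an error-correction argument recovers the same-sum equivalence relation. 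Count degrees: for $v=b_x+b_y-b_p-b_q$ with $x+y=p+q=s$, the genuine $\pm2$-neighbours are obtained by replacing the positive or the negative pair by another pair of sum $s$, so there are exactly $2(r(s)-2)$ of them, which is at most roughly $d$ and can be $O(1)$ when $s$ is near its extreme values; whereas the accidental neighbours --- those of the form $b_x+b_u-b_p-b_t$ with $t=x+u-p$, together with the three other mixed $(+,-)$ sharing patterns --- number about $4\vert A\vert\approx 4d$, since for each pattern essentially every $u\in A$ with the compensating index in $A$ produces one. So at \emph{every} vertex the errors outnumber the genuine edges, typically by a factor of about $4$; the transitive closure of the global $\pm2$-relation collapses to something far coarser than ``same sum'', and no correction scheme in the spirit of Section \ref{subsectalphaequiv} (which needs per-vertex errors to be a fraction $<1/3$ of the class size, and needs vertices outside a class to have few edges into it) can apply to this relation.

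This is exactly why the paper does not argue globally. It fixes one minimal vector $v$, restricts to its neighbourhood $\mathcal N(v)=\{w:\langle v,w\rangle=2\}$, quotients by the involution $w\mapsto v-w$, and takes as edge-relation the \emph{parity} of inner products on $\overline{\mathcal N(v)}$; in that localized graph the non-generic parities number at most $8$ per vertex --- an absolute constant --- against classes of size at least $29$, so the $\frac{2}{7}$-quasi-equivalence machinery of Proposition \ref{propdistalpha} genuinely applies. Moreover, since an abstract isomorphism has no reason to match your chosen quadruple with $b_1-b_2-b_3+b_4$, the paper must also characterize this base vector intrinsically: this is the entire ``admissibility'' analysis of Proposition \ref{propadmissvect}, which rules out the competing candidates $b_a-b_{a+p}-b_{a+2p}+b_{a+3p}$ with $p\geq2$ or $a\neq1$ before Proposition \ref{propvdetholes} can read off the holes from the path $\mathcal P$. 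Both ingredients --- a localization making the error count $O(1)$ rather than $\Theta(d)$, and an intrinsic characterization of the base point --- are absent from your plan, and without them the announced error correction has no sparse errors to correct.
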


\section{Proof of Theorem \ref{thmperf}}

The proof for perfection is an induction.
The induction step is the following
special case of Corollary \ref{corperfcrit}:

\begin{prop}\label{propinductionstep} Let $h_1,h_2,\dots\subset\{2,3,4,\dots\}$ be a strictly increasing
sequence of natural integers $\geq 2$ such that 
$L_d(h_1,h_2,\dots)$ and $L_d(h_1-1,h_2-1,h_3-1,\dots)$ are perfect lattices
of minimum $4$ and such that $L_{d+1}(h_1,h_2,\dots)$ contains a minimal
vector $(1,x_2,\dots,x_{d+1},1)$ starting and ending with a coefficient $1$.
Then $L_{d+1}(h_1,h_2,\dots)$ is perfect.
\end{prop}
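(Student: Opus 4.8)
The plan is to recognise $L_{d+1}(h_1,h_2,\dots)$ as the ambient lattice $\Lambda$ in Corollary \ref{corperfcrit}, the two required perfect sublattices being obtained as sections by two coordinate hyperplanes. Write $\Lambda=L_{d+1}(h_1,h_2,\dots)\subset\mathbb Z^{d+3}$, supported on the first $d+3$ non-hole indices $i_1=1<i_2<\dots<i_{d+3}$ (recall $h_1\geq 2$, so $1$ is never a hole), and let $\mathcal H'$, respectively $\mathcal H''$, be the hyperplane of $\Lambda\otimes_{\mathbb Z}\mathbb R$ cut out by the vanishing of the coordinate of index $i_1=1$, respectively of index $i_{d+3}$. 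These two hyperplanes are distinct, since $d+3\geq 2$.

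First I would identify the two sections with the two hypothesised lattices. The section $\Lambda''=\Lambda\cap\mathcal H''$ is the set of vectors of $\Lambda$ supported on the first $d+2$ non-hole indices and satisfying $\sum_i x_i=0$, $\sum_i i\,x_i=0$; it is therefore literally $L_d(h_1,h_2,\dots)$. For $\Lambda'=\Lambda\cap\mathcal H'$ I would apply the isometric relabelling $i\mapsto i-1$ of the standard basis vectors. This sends the support $\{i_2,\dots,i_{d+3}\}\subset\{2,3,\dots\}$ to the first $d+2$ non-hole indices of $\{1,2,\dots\}\setminus\{h_1-1,h_2-1,\dots\}$, while the condition $\sum_i i\,x_i=0$ becomes $\sum_j (j+1)x'_j=\sum_j j\,x'_j+\sum_j x'_j=\sum_j j\,x'_j=0$, the term $\sum_j x'_j$ being absorbed by the zero coordinate-sum. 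Hence $\Lambda'$ is isometric to $L_d(h_1-1,h_2-1,\dots)$. By hypothesis both $\Lambda'$ and $\Lambda''$ are perfect of minimum $4$.

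Next I would verify the remaining hypotheses of Corollary \ref{corperfcrit}. Since $\mathcal H'\neq\mathcal H''$, the lattices $\Lambda'$ and $\Lambda''$ are distinct rank-$d$ sublattices spanning $\mathcal H'$ and $\mathcal H''$ respectively, so $\Lambda'+\Lambda''$ spans $\mathcal H'+\mathcal H''=\Lambda\otimes_{\mathbb Z}\mathbb R$ and has full rank $d+1$. As every lattice of this family is even and has no vector of norm $2$, the minimum of $\Lambda$ is at least $4$; containing the minimum-$4$ sublattice $\Lambda''$, it is exactly $4$. Finally, the hypothesised minimal vector $v$ starts and ends with a coefficient $1$, i.e. has nonzero coordinate at both extreme indices $i_1$ and $i_{d+3}$; therefore $v\notin\mathcal H'\cup\mathcal H''$, and no nonzero rational multiple of $v$ can lie in either hyperplane, whence $\mathbb Qv\cap\Lambda'=\mathbb Qv\cap\Lambda''=\{0\}$. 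Corollary \ref{corperfcrit} then applies and gives perfection of $L_{d+1}(h_1,h_2,\dots)$.

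The only step carrying any content is the index-shift identification of $\Lambda\cap\mathcal H'$ with $L_d(h_1-1,h_2-1,\dots)$: this is where the holes get shifted by one and where the vanishing coordinate-sum is needed to kill the constant term produced by the linear form $\sum_i i\,x_i$. I do not expect a genuine obstacle here; the rest is the routine bookkeeping required to match the data to Corollary \ref{corperfcrit}.
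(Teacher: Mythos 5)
Your proof is correct and follows exactly the route the paper intends: the paper presents Proposition \ref{propinductionstep} as a direct special case of Corollary \ref{corperfcrit}, with $L_d(h_1,h_2,\dots)$ and $L_d(h_1-1,h_2-1,\dots)$ realised as the sections of $L_{d+1}(h_1,h_2,\dots)$ by the last-coordinate and first-coordinate hyperplanes, and the hypothesised vector $(1,x_2,\dots,x_{d+1},1)$ supplying the minimal vector outside both. You merely make explicit the index-shift isometry and the rank and minimum verifications that the paper leaves implicit.
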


\proof[Proof of Theorem \ref{thmperf}]
We assume perfection of every $d$-dimensional lattice
$L_d(h_1,h_2,\dots)$ with $h_{i+1}-h_i\geq 6$ for all $i$.

We consider a $(d+1)$-dimensional lattice 
$L_{d+1}(h_1,h_2,\dots)$ with $h_1>1$
(otherwise we remove $h_1$ and shift all holes $h_2,\dots$ by $1$, i.e. we
 consider $L_{d+1}(h_2-1,h_3-1,\dots)$).

By induction, the two $d$-dimensional sublattices
$L_d(h_1,h_2,\dots)$ (consisting of all elements 
in $L_{d+1}(h_1,h_2,\dots)$ with last coordinate $0$)
and $L_d(h_1-1,h_2-1,\dots)$
(consisting of all elements 
in $L_{d+1}(h_1,h_2,\dots)$ with first coordinate $0$)
are perfect. In order to apply Proposition \ref{propinductionstep}, we have
only to show that $L_{d+1}(h_1,\dots)$ contains a minimal 
vector of the form $(1,\dots,1)$, i.e. starting and ending with
a coordinate $1$.
We denote by $h=(1,a,b,c,\dots,w,x,y,z)$ the $(d+3)$-dimensional
vector $(1,2,\dots,h_1-1,\widehat{h_1},h_1+1,\dots,)$.
The condition $h_{i+1}-h_i\geq 6\geq 3$ ensures that both sets 
$\{2,3,4\}$ and $\{z-1,z-2,z-3\}$ contain at most one element in 
the set $\{h_1,\dots\}$ of holes.
There exist thus $s\in \{a,b,c\}$ and $t\in \{w,x,y\}$ such that $s+t=1+z$
ensuring existence of a minimal vector $b_1-b_s-b_t+b_z$ of the form 
$(1,\dots,1)$ in $L_{d+1}(h_1,h_2,\dots)$. 
Proposition \ref{propinductionstep} implies now perfection of 
$L_{d+1}(h_1,\dots)$. 

We have yet to check the initial conditions. It turns out that 
Theorem \ref{thmperf} holds almost in dimension $9$. Indeed, all
lattices $L_9(h_1,\dots)$ with $h_{j+1}-h_j\geq 6$ for all $j$
are perfect (as can be checked by a machine computation)
except the lattice $L_9(4,10)$ (given by all elements
of $\mathbb Z^{11}$ orthogonal to $(1,1,1,1,1,1,1,1,1,1,1)$ and 
$(1,2,3,5,6,7,8,9,11,12,13)$). Fortunately, the two essentially isomorphic 
lattices $L_{10}(4,10)=L_{10}(5,11)$ (which are the two only possible
ways to extend $L_9(4,10)$ into a $10$ dimensional lattice 
of the form $L_{10}(h_1,\dots)$ with $h_{i+1}-h_i\geq 6$)
are perfect. All other $10$-dimensional lattices $L_{10}(h_1,h_2,\dots)$ 
satisfying the conditions of Theorem \ref{thmperf}
are associated to two lattices $L_9(h_1,h_2,\dots)$ and $L_9(h_1-1,h_2-1,\dots)$
which are both non-isomorphic to $L_9(4,10)$.
They are thus perfect by Proposition \ref{propinductionstep}.
\endproof

We display below the list of all $9$-dimensional lattices
$L_9(h_1,\dots)$ with $h_{i+1}-h_i\geq 6$, up to essential 
isomorphism. The last entry is devoted to the $10$-dimensional
lattice $L_{10}(4,10)$. Columns have hopefully understandable meanings
(the last column, labelled $d_2$, gives the rank of the vector space
of symmetric tensors spanned by all vectors $v\otimes v$ for $v$ belonging
to the set $\Lambda_{\min}$ of minimal vectors):
$$\begin{array}{c|c|c|c|c|c}
\hbox{dim}&h_j&\hbox{coordinates of }h&\det&\vert\Lambda_{\min}\vert/2&d_2\\
\hline
9&&1,2,3,4,5,6,7,8,9,10,11&1210&70&45\\
9&2&1,3,4,5,6,7,8,9,10,11,12&1330&66&45\\
9&3&1,2,4,5,6,7,8,9,10,11,12&1426&63&45\\
9&4&1,2,3,5,6,7,8,9,10,11,12&1498&61&45\\
9&5&1,2,3,4,6,7,8,9,10,11,12&1546&60&45\\
9&6&1,2,3,4,5,7,8,9,10,11,12&1570&60&45\\
9&2,8&1,3,4,5,6,7,9,10,11,12,13&1700&56&45\\
9&2,9&1,3,4,5,6,7,8,10,11,12,13&1674&57&45\\
9&2,10&1,3,4,5,6,7,8,9,11,12,13&1624&57&45\\
9&2,11&1,3,4,5,6,7,8,9,10,12,13&1550&60&45\\
9&2,12&1,3,4,5,6,7,8,9,10,11,13&1452&62&45\\
9&3,9&1,2,4,5,6,7,8,10,11,12,13&1778&55&45\\
9&3,10&1,2,4,5,6,7,8,9,11,12,13&1726&56&45\\
9&3,11&1,2,4,5,6,7,8,9,10,12,13&1650&58&45\\
9&4,10&1,2,3,5,6,7,8,9,11,12,13&1804&54&{\bf 44}\\
10&4,10&1,2,3,5,6,7,8,9,11,12,13,14&2507&75&55
\end{array}$$
\endproof

\section{Isomorphic lattices}

The proof of Theorem  \ref{thmautom} is based on the fact that 
metric properties of minimal vectors in a suitable 
lattice $L_d(h_1,h_2,\dots)$ determine the sequence $h_1,h_2,\dots$
up to the essential isomorphism. 

Our main tool for proving this assertion
is a graph-theoretical result of independent interest
described in the next Section.
It gives lower bounds
on the amount of ``tampering'' which does not destroy large equivalence
classes of an equivalence relation on a finite set.

\subsection{$\alpha$-quasi-equivalence classes}\label{subsectalphaequiv}

Consider an equivalence
relation on some finite set which has been slightly 
``tampered with'' and transformed into
a symmetric and reflexive relation which is in general no longer transitive. 
This section describes sufficient
(but not necessarily optimal) conditions on the amount of tampering
which allow the recovery of suitable equivalence classes. 

Symmetric and reflexive relations on a set $V$ are in one-to-one
correspondence with simple graphs with $V$ as their set of vertices. (Recall that 
a simple graph has only undirected edges without multiplicities joining
distinct vertices.)
Given a symmetric and reflexive relation $R$, 
two distinct elements $u,v$ of $V$ are adjacent (joined by an undirected edge) 
if and only if 
$u$ and $v$ are related by $R$. Equivalence relations correspond to disjoint
unions of complete graphs. We use this graph-theoretical framework 
until the end of this Section.

We denote by $\mathcal N_\Gamma(v)$ the set of neighbours (adjacent vertices)
of a vertex $v$ in a simple graph $\Gamma$ and we denote by 
$A\Delta B=(A\cup B)\setminus (A\cap B)$ the symmetric difference 
of two sets $A,B$.

Given a real positive number $\alpha$ in $[0,1/3)$,
a subset $\mathcal C$ of vertices of a finite simple
graph $\Gamma$ is an \emph{$\alpha$-quasi-equivalence class of $\Gamma$} 
if
\begin{align*}
\vert (\mathcal N_\Gamma(v)\cup\{v\})\Delta\mathcal C\vert
&\leq \alpha\vert \mathcal C\vert
\end{align*}
for every vertex $v$ of $\mathcal C$ and
\begin{align*}
\vert\mathcal N_\Gamma(v)\cap\mathcal C\vert&<(1-3\alpha)\vert
\mathcal C\vert
\end{align*}
for every vertex $v$ which is not in $\mathcal C$.

\begin{exple}\label{explequasieqcl} Let $\mathcal C$ be a set of at least $29$ vertices
of a simple graph $\Gamma$. Suppose that $\vert(\mathcal N_\Gamma(v)
\cup \{v\})\Delta\mathcal C\vert\leq 8$ for $v\in\mathcal C$ and
$\vert\mathcal N_\Gamma(v)\cap \mathcal C\vert\leq 4$ for 
$v\not\in \mathcal C$. Then $\mathcal C$ is 
a $\frac{2}{7}$-quasi-equivalence class.
\end{exple}

$0$-quasi-equivalence classes in $\Gamma$ are (vertex-sets of) 
maximal complete subgraphs (also called maximal cliques) of $\Gamma$.
Given a small strictly positive real $\epsilon$, a large 
$\epsilon$-quasi-equivalence class $\mathcal C$ induces almost a
maximal complete 
subgraph: only very few edges (a proportion of at most $\epsilon$)
between a fixed vertex $v\in\mathcal C$ and the remaining vertices of 
$\mathcal C$ can be missing and $v$ can only be adjacent to at most 
$\lfloor \epsilon \vert\mathcal C\vert\rfloor$ 
vertices outside $\mathcal C$. Notice however that a vertex $w\not
\in \mathcal C$ can be adjacent to a very large proportion 
(strictly smaller than 
$(1-3\epsilon)$)
of vertices in $\mathcal C$.

On the other hand, given $\alpha=\frac{1}{3}-\epsilon$ (for small $\epsilon>0$), 
an $\alpha$-quasi-equivalence class $\mathcal C$
can have many missing edges between elements of $\mathcal C$ and it can
have a rather large amount of edges joining 
an element of $\mathcal C$ with elements in the complement of $\mathcal C$.
An element $w\not\in \mathcal C$ is however connected only to a very 
small proportion (of at most $3\epsilon$) of vertices in $\mathcal C$.
Such a class corresponds to a secret organisation 
whose existence is difficult to discover for non-members. 
It displays also some aspects of a connected  component.

\begin{prop}\label{propdistalpha} Distinct $\alpha$-quasi-equivalence classes of 
a finite graph are disjoint.
\end{prop}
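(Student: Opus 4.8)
The plan is to prove the contrapositive by showing that if two $\alpha$-quasi-equivalence classes $\mathcal C$ and $\mathcal D$ intersect, then they must coincide. So I would begin by fixing a vertex $v \in \mathcal C \cap \mathcal D$ and extracting quantitative information from the defining inequalities of each class applied to this common vertex. Since $v \in \mathcal C$, the first condition gives $|(\mathcal N_\Gamma(v) \cup \{v\}) \Delta \mathcal C| \leq \alpha |\mathcal C|$, and similarly $|(\mathcal N_\Gamma(v) \cup \{v\}) \Delta \mathcal D| \leq \alpha |\mathcal D|$. The idea is that $v$'s closed neighbourhood $N := \mathcal N_\Gamma(v) \cup \{v\}$ is simultaneously a very good approximation to both $\mathcal C$ and $\mathcal D$, so $\mathcal C$ and $\mathcal D$ must themselves be close. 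By the triangle inequality for symmetric differences, $|\mathcal C \Delta \mathcal D| \leq |\mathcal C \Delta N| + |N \Delta \mathcal D| \leq \alpha |\mathcal C| + \alpha |\mathcal D|$.

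The next step is to leverage this closeness, together with the second defining condition (the one constraining vertices outside a class), to force $\mathcal C = \mathcal D$. First I would compare the sizes of $\mathcal C$ and $\mathcal D$: from $|\mathcal C \Delta N| \leq \alpha|\mathcal C|$ one reads off that $|\mathcal C \cap N| \geq (1-\alpha)|\mathcal C|$ and $|N| \geq (1-\alpha)|\mathcal C|$, and symmetrically $|N| \geq (1-\alpha)|\mathcal D|$, but I also need upper bounds, namely $|N| \leq (1+\alpha)|\mathcal C|$ and $|N|\le(1+\alpha)|\mathcal D|$. Combining these yields $(1-\alpha)|\mathcal C| \leq (1+\alpha)|\mathcal D|$ and vice versa, so $|\mathcal C|$ and $|\mathcal D|$ differ by at most a factor $\frac{1+\alpha}{1-\alpha}$. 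The crucial move is then a suppose-for-contradiction argument: assume $\mathcal C \neq \mathcal D$, so there exists a vertex $w$ lying in exactly one of them, say $w \in \mathcal D \setminus \mathcal C$. Because $w \notin \mathcal C$, the second condition gives $|\mathcal N_\Gamma(w) \cap \mathcal C| < (1-3\alpha)|\mathcal C|$; because $w \in \mathcal D$, the first condition gives $|(\mathcal N_\Gamma(w) \cup \{w\}) \Delta \mathcal D| \leq \alpha|\mathcal D|$, so $w$ is adjacent to at least $(1-\alpha)|\mathcal D|$ vertices of $\mathcal D$.

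The heart of the argument is to derive a contradiction between these two facts about $w$ using the fact that $\mathcal C$ and $\mathcal D$ overlap heavily. Since $|\mathcal C \Delta \mathcal D| \leq \alpha(|\mathcal C| + |\mathcal D|)$, most neighbours of $w$ inside $\mathcal D$ must actually lie in $\mathcal C$: specifically $|\mathcal N_\Gamma(w) \cap \mathcal C| \geq |\mathcal N_\Gamma(w) \cap \mathcal D| - |\mathcal D \setminus \mathcal C| \geq (1-\alpha)|\mathcal D| - |\mathcal C \Delta \mathcal D|$. I would then substitute the bound on $|\mathcal C \Delta \mathcal D|$ and the size comparison between $|\mathcal C|$ and $|\mathcal D|$ to obtain a lower bound on $|\mathcal N_\Gamma(w) \cap \mathcal C|$ of the form $(\text{const})\cdot|\mathcal C|$, and the goal is to show this constant is at least $1-3\alpha$, contradicting $w \notin \mathcal C$. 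This is where the precise numerical hypothesis $\alpha \in [0,1/3)$ earns its keep: the whole chain of inequalities should collapse to a statement that holds exactly when $\alpha < 1/3$.

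I expect the main obstacle to be the bookkeeping in this final estimate — keeping the worst-case direction of every inequality consistent and making sure the constants align so that the contradiction appears precisely for $\alpha < 1/3$ rather than some weaker threshold. In particular the factor $\frac{1+\alpha}{1-\alpha}$ relating $|\mathcal C|$ and $|\mathcal D|$ must be handled carefully, since a careless bound there could blow up the estimate and only yield disjointness for a smaller range of $\alpha$. The cleanest route may be to argue symmetrically, replacing the crude size comparison with direct set-theoretic counting of $\mathcal N_\Gamma(w)\cap\mathcal C$ in terms of $\mathcal N_\Gamma(w)\cap\mathcal D$ and the small discrepancy sets, so that all the $|\mathcal C|$-versus-$|\mathcal D|$ conversions happen in one controlled place.
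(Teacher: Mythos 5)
Your first half is sound and parallels the paper: the triangle inequality for symmetric differences at a common vertex $v$ gives $\vert\mathcal C\Delta\mathcal D\vert\leq\alpha(\vert\mathcal C\vert+\vert\mathcal D\vert)$, which is the symmetric form of the paper's inequality $\vert\mathcal C_1\setminus\mathcal C_2\vert\leq 2\alpha\vert\mathcal C_1\vert$. The gap is in your final estimate, and it is not mere bookkeeping: the route you propose does not close for any $\alpha$ in the allowed range. Taking $w\in\mathcal D\setminus\mathcal C$ and writing $\vert\mathcal N_\Gamma(w)\cap\mathcal C\vert\geq\vert\mathcal N_\Gamma(w)\cap\mathcal D\vert-\vert\mathcal D\setminus\mathcal C\vert\geq(1-\alpha)\vert\mathcal D\vert-\alpha(\vert\mathcal C\vert+\vert\mathcal D\vert)=(1-2\alpha)\vert\mathcal D\vert-\alpha\vert\mathcal C\vert$, you need this to reach $(1-3\alpha)\vert\mathcal C\vert$, which is equivalent to $(1-2\alpha)\vert\mathcal D\vert\geq(1-2\alpha)\vert\mathcal C\vert$, i.e.\ to $\vert\mathcal D\vert\geq\vert\mathcal C\vert$. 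If all you have is the two-sided comparison $\vert\mathcal D\vert\geq\frac{1-\alpha}{1+\alpha}\vert\mathcal C\vert$, the constant you obtain is $\frac{(1-2\alpha)(1-\alpha)-\alpha(1+\alpha)}{1+\alpha}=\frac{1-4\alpha+\alpha^2}{1+\alpha}$, and the inequality $\frac{1-4\alpha+\alpha^2}{1+\alpha}\geq 1-3\alpha$ reduces to $4\alpha^2\geq 2\alpha$, i.e.\ $\alpha\geq 1/2$ --- outside the entire range $[0,1/3)$. So the factor $\frac{1+\alpha}{1-\alpha}$ you flagged as a worry does not merely shrink the range of admissible $\alpha$: it destroys the contradiction completely, and your chain of inequalities proves nothing for $\alpha\in(0,1/3)$.

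The missing idea is to avoid that size comparison altogether by choosing the witness in the \emph{larger} class: order the classes so that $\vert\mathcal D\vert\geq\vert\mathcal C\vert$ and pick $w\in\mathcal D\setminus\mathcal C$, which is nonempty when $\mathcal C\neq\mathcal D$ precisely because $\vert\mathcal D\vert\geq\vert\mathcal C\vert$. Then $\vert\mathcal D\vert\geq\vert\mathcal C\vert$ is available for free and the estimate above closes. This is exactly what the paper does: it fixes $\vert\mathcal C_1\vert\geq\vert\mathcal C_2\vert$, takes $w\in\mathcal C_1\setminus\mathcal C_2$, splits $\mathcal C_1\cap\mathcal C_2$ into non-neighbours of $w$ (at most $\alpha\vert\mathcal C_1\vert$, since $w\in\mathcal C_1$) and neighbours of $w$ (strictly fewer than $(1-3\alpha)\vert\mathcal C_1\vert$, since $w\notin\mathcal C_2$), and contradicts $\vert\mathcal C_1\cap\mathcal C_2\vert\geq(1-2\alpha)\vert\mathcal C_1\vert$; every conversion between the two cardinalities is of the harmless form $\alpha\vert\mathcal C_2\vert\leq\alpha\vert\mathcal C_1\vert$. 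One further slip to repair when you write this up: since $w\notin\mathcal N_\Gamma(w)$, membership $w\in\mathcal D$ only gives $\vert\mathcal N_\Gamma(w)\cap\mathcal D\vert\geq(1-\alpha)\vert\mathcal D\vert-1$, not $(1-\alpha)\vert\mathcal D\vert$; this off-by-one can be cancelled by subtracting $\vert\mathcal N_\Gamma(w)\cap(\mathcal D\setminus\mathcal C)\vert\leq\vert\mathcal D\setminus\mathcal C\vert-1$ instead of $\vert\mathcal D\setminus\mathcal C\vert$, but as stated your bound is false. The paper sidesteps both problems at once by doing all the counting inside $\mathcal C_1\cap\mathcal C_2$, a set that never contains $w$.
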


\begin{proof}
A common vertex $v$ of two intersecting $\alpha$-quasi-equivalence 
classes $\mathcal C_1$ and $\mathcal C_2$ with 
$\vert\mathcal C_1\vert\geq \vert\mathcal C_2\vert$ gives rise to  
the inequalities
\begin{align*}
\alpha\vert \mathcal C_1\vert&\geq \vert(\mathcal N_\Gamma(v)\cup\{v\})
\Delta\mathcal C_1\vert\\
&\geq \vert(\mathcal C_1\setminus\mathcal C_2)\setminus\mathcal N_\Gamma(v)\vert
\end{align*}
and
\begin{align*}
\alpha\vert \mathcal C_1\vert&\geq\alpha\vert \mathcal C_2\vert\\
&\geq\vert(\mathcal N_\Gamma(v)\cup\{v\})
\Delta\mathcal C_2\vert\\
&\geq \vert(\mathcal C_1\setminus\mathcal C_2)\cap\mathcal N_\Gamma(v)\vert.
\end{align*}
Thus we get
\begin{align*}
2\alpha\vert \mathcal C_1\vert&\geq
\vert\mathcal C_1\setminus\mathcal C_2\vert.
\end{align*}
The trivial inequality
$$\vert C_1\setminus C_2\vert\leq \vert C_1\vert-\vert C_1\cap C_2\vert$$
equivalent to 
$$\vert C_1\cap C_2\vert\leq \vert C_1\vert-\vert C_1\setminus C_2\vert$$
implies now
\begin{align}\label{ineqlemwelldfd}
\vert\mathcal C_1\cap \mathcal C_2\vert\geq
\vert C_1\vert -2\alpha\vert C_1\vert=(1-2\alpha)\vert\mathcal C_1\vert.
\end{align}

Assuming $\mathcal C_1\not=\mathcal C_2$ (and $\vert\mathcal C_1\vert\geq
\vert\mathcal C_2\vert$) we can choose an element $w\in\mathcal C_1\setminus
\mathcal C_2$. We have
\begin{align*}
\alpha\vert \mathcal C_1\vert&\geq \vert(\mathcal N_\Gamma(w)\cup\{w\})
\Delta\mathcal C_1\vert\\
&\geq \vert(\mathcal C_1\cap\mathcal C_2)\setminus\mathcal N_\Gamma(w)\vert
\end{align*}
and
\begin{align*}
(1-3\alpha)\vert \mathcal C_1\vert&\geq(1-3\alpha)\vert \mathcal C_2\vert\\
&>\vert \mathcal N_\Gamma(w)\cap \mathcal C_2\vert\\
&\geq \vert(\mathcal C_1\cap\mathcal C_2)\cap\mathcal N_\Gamma(w)\vert
\end{align*}
which imply
\begin{align*}
(1-2\alpha)\vert \mathcal C_1\vert&>\vert\mathcal C_1\cap \mathcal C_2\vert
\end{align*}
in contradiction with inequality (\ref{ineqlemwelldfd}).
\end{proof}

\subsection{Error-graphs}

The \emph{error-graph} $\mathcal E(\cup_i \mathcal C_i\subset \Gamma)$
of a simple graph $\Gamma$ with a vertex partition $V=\cup_i\mathcal C_i$
into disjoint subsets $\mathcal C_i$ is defined as follows: 
$V=\cup_i \mathcal C_i$ is also the vertex set of 
$\mathcal E=\mathcal E(\cup_i \mathcal C_i\subset \Gamma)$. Two vertices 
$u_{\mathcal E},v_{\mathcal E}$ of $\mathcal E$ are adjacent in $\mathcal E$ if there exists
either an index $i$ such that the corresponding vertices $u_\Gamma$ and $v_\Gamma$ are non-adjacent (in $\Gamma$) and belong to a common subset 
$\mathcal C_i$, or
$u_\Gamma$ and $v_\Gamma$ are adjacent in $\Gamma$
and belong to two different subsets $\mathcal C_i,\mathcal C_j$.

Otherwise stated, $\mathcal E$ is obtained from $\Gamma$ by exchanging
adjacency and non-adjacency in every induced subgraph with vertices 
$\mathcal C_i$.

Edges of the error-graph $\mathcal E(\cup_i \mathcal C_i\subset \Gamma)$
are thus ``errors'' of the symmetric relation on $V$ 
encoded by (edges of) $\Gamma$
with respect to the equivalence relation with equivalence 
classes $\mathcal C_i$.

\subsection{Neighbourhoods}\label{sectionneighbourhoods}

We consider the set $\Lambda_{\min}$ of all minimal vectors
in a fixed lattice $\Lambda=L_d(h_1,h_2,\dots)$ with minimum $4$.
Two minimal vectors $v,w\in \Lambda_{\min}$ are \emph{neighbours}
if $\langle v,w\rangle=2$. 
The set $\mathcal N(v)$ of neighbours of a given element 
$v\in
\Lambda_{\min}$ can be partitioned into six disjoint subsets
$$\mathcal N(v)=\mathcal F_{**00}(v)\cup \mathcal F_{*0*0}(v)\cup
\mathcal F_{*00*}(v)\cup\mathcal F_{0**0}(v)\cup\mathcal F_{0*0*}(v)
\cup\mathcal F_{00**}(v)$$
with stars, respectively zeros, indicating coordinates $\pm1$, 
respectively $0$, in the support $\{i,i+k,j-k,j\},\ i<i+k<j-k<j$ of 
$v=b_i-b_{i+k}-b_{j-k}+b_k$.

The involution $\iota:w\longrightarrow v-w$ induces a one-to-one correspondence
between the two subsets of the three pairs 
$$\{\mathcal F_{**00}(v),\mathcal F_{00**}(v)\},\{\mathcal F_{*0*0}(v),\mathcal F_{0*0*}(v)\},\{\mathcal F_{*00*}(v),\mathcal F_{0**0}(v)\}\ .$$
We call such pairs \emph{complementary} and denote them using the hopefully 
self-explanatory notations
\begin{align*}
\mathcal F_{aabb}(v)&=\mathcal F_{**00}(v)\cup\mathcal F_{00**}(v),\\
\mathcal F_{abab}(v)&=\mathcal F_{*0*0}(v)\cup\mathcal F_{0*0*}(v),\\
\mathcal F_{abba}(v)&=\mathcal F_{*00*}(v)\cup\mathcal F_{0**0}(v).\\
\end{align*}
We write 
$\overline{\mathcal F_{aabb}(v)},\overline{\mathcal F_{abab}(v)},\overline{\mathcal F_{abba}(v)}$ for the orbits under $\iota$ of the three sets 
$\mathcal F_{aabb}(v),\mathcal F_{abab}(v),\mathcal F_{abba}(v)$.
Either $\mathcal F_{**00}(v)$ or $\mathcal F_{00**}$ represent all elements 
of $\overline{\mathcal F_{aabb}(v)}$. The same
statement holds for 
$\overline{\mathcal F_{abab}(v)},\overline{\mathcal F_{abba}(v)}$.
Henceforth, we identify often a vector $w\in\mathcal N(v)$ with 
its class in $\overline{\mathcal N(v)}$.

The following table lists all 6 elements of the set $\mathcal N(
0,1,0,-1,-1,0,1)$ in $L_5(\emptyset)=
\left(\mathbb Z(1,1,1,1,1,1,1)+\mathbb Z(1,2,3,4,5,6,7)\right)^\perp\subset
\mathbb Z^7$
together with the sets $\mathcal F_{**00},\mathcal F_{*0*0},\mathcal F_{*00*},
\mathcal F_{0**0},\mathcal F_{0*0*},\mathcal F_{00**}$ and 
$\mathcal F_{aabb},\mathcal F_{abab},\mathcal F_{abba}$ (with dropped 
common argument $v=(0,1,0,-1,-1,0,1)\in\Lambda_{\min}$) containing them: 
$$\begin{array}{ccccccc|c|c}
0&1&0&-1&-1&0&1&&\\
\hline
-1&1&1&-1&0&0&0&\in\mathcal F_{**00}&\subset\mathcal F_{aabb}\\
1&0&-1&0&-1&0&1&\in\mathcal F_{00**}&\subset\mathcal F_{aabb}\\
0&1&-1&0&-1&1&0&\in\mathcal F_{*0*0}&\subset\mathcal F_{abab}\\
0&0&1&-1&0&-1&1&\in\mathcal F_{0*0*}&\subset\mathcal F_{abab}\\
0&1&-1&0&0&-1&1&\in\mathcal F_{*00*}&\subset\mathcal F_{abba}\\
0&0&1&-1&-1&1&0&\in\mathcal F_{0**0}&\subset\mathcal F_{abba}\\
\end{array}$$

Since 
$\langle u,v-w\rangle=2-\langle u,w\rangle$, the parity
of the scalar product is well-defined on $\overline{\mathcal N(v)}$. 
We get thus a map 
$\overline{\mathcal N(v)}\times \overline{\mathcal N(v)}\longrightarrow 
\mathbb Z/2\mathbb Z$ where $\overline{\mathcal N(v)}=
\overline{\mathcal F_{aabb}(v)}\cup\overline{\mathcal F_{abab}(v)}\cup \overline{\mathcal F_{abba}(v)}$. We say that two classes represented by $u,w$ have a 
generic scalar product if $\langle u,v\rangle\equiv 0\pmod 2$ if and only
if $u,v$ belong both to the same subset 
$\overline{\mathcal F_{aabb}(v)},\overline{\mathcal F_{abab}(v)},\overline{\mathcal F_{abba}(v)}$ of $\overline{\mathcal N(v)}$.
Generic values are given by the table
$$\begin{array}{c||c|c|c}
&\overline{\mathcal F_{aabb}}&\overline{\mathcal F_{abab}}&\overline{
\mathcal F_{abba}}\\
\hline\hline
\overline{\mathcal F_{aabb}}&{\bf 0}&{\bf 1}&{\bf 1}\\
\hline
\overline{\mathcal F_{abab}}&{\bf 1}&{\bf 0}&{\bf 1}\\
\hline
\overline{\mathcal F_{abba}}&{\bf 1}&{\bf 1}&{\bf 0}\\
\end{array}$$
Non-generic values, often called errors in the sequel, occur 
at most $8$ times with a given first element in $\overline{\mathcal N(v)}$.
More precisely, the table 
$$\begin{array}{c||c|c|c}
&\overline{\mathcal F_{aabb}}&\overline{\mathcal F_{abab}}&\overline{
\mathcal F_{abba}}\\
\hline\hline
\overline{\mathcal F_{aabb}}&\leq 2&\leq 4&\leq 2\\
\hline
\overline{\mathcal F_{abab}}&\leq 4&\leq 2&\leq 2\\
\hline
\overline{\mathcal F_{abba}}&\leq 2&\leq 2&0\\
\end{array}$$
displays upper bounds for the number of errors 
(of the map $\overline{\mathcal N(v)}\times \overline{\mathcal N(v)}\longrightarrow 
\mathbb Z/2\mathbb Z$) occurring 
with a fixed element of $\overline{\mathcal N(v)}$.

We illustrate the first line by considering 
$v=(0,0,0,0,1,-1,1,-1,0,0,\dots)\in L_{d\geq 12}(h_1\geq 15,\dots)$. Vectors $e_1,\dots,e_8$
representing all errors within $\overline{\mathcal F(v)}$
occurring with the class of 
$w=(0,0,0,0,1,-1,0,0,0,0,-1,1,0,\dots)$ in $\overline{\mathcal F_{aabb}(v)}$
are given by
$$\begin{array}{c|cccccccccccccc|c}
v  &0&0&0&0&+&-&-&+&0&0&0&0&0&0&\langle w,e_j\rangle\\
\hline\hline
w=e_0  &&&&&+&-&&&&&-&+&&&4\\
\hline\hline
e_1&&&&&+&-&&&&-&+&&&&1\\
e_2&&&&&+&-&&&&&&-&+&&1\\
\hline
e_3&&&&&+&&-&&-&&+&&&&0\\
e_4&&&&&+&&-&&&-&&+&&&2\\
e_5&&&&&+&&-&&&&-&&+&&2\\
e_6&&&&&+&&-&&&&&-&&+&0\\
\hline
e_7&&-&&&+&&&+&&&-&&&&2\\
e_8&-&&&&+&&&+&&&&-&&&0\\
\end{array}$$
where we keep only the signs of non-zero coefficients. The elements
$e_1,e_2$ realise the maximal number of two 
errors occurring
within $\overline{\mathcal F_{aabb}(v)}$, the vectors $e_3,\dots,e_6$ realise
the maximal number of $4$ errors occurring between the class of $w$ in 
$\overline{\mathcal F_{aabb}(v)}$ and $\overline{\mathcal F_{abab}(v)}$
and $e_7,e_8$ realise the maximal number of $2$ errors between the class of $w$ and $\overline{\mathcal F_{abba}(v)}$.

We consider $\overline{\mathcal N(v)}$ as the vertex-set of a graph
with edges given by pairs of different vertices with an even scalar
product among representatives. 

\begin{prop}\label{propclwdefinN} If at least two of the 
three classes $\overline{\mathcal F_{aabb}(v)}, 
\overline{\mathcal F_{abab}(v)},\overline{\mathcal F_{abba}(v)}$ contain 
at least $29$ elements then all three classes 
are uniquely
defined in terms of the graph-structure on $\overline{\mathcal N(v)}$.
\end{prop}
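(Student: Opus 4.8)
The plan is to recognise each class as a $\frac{2}{7}$-quasi-equivalence class of the graph $\Gamma$ on $\overline{\mathcal N(v)}$ and then to invoke the disjointness statement of Proposition \ref{propdistalpha}. Recall that $\Gamma$ joins two vertices exactly when a pair of representatives has even scalar product. The generic-value table says that within one of the classes $\overline{\mathcal F_{aabb}(v)},\overline{\mathcal F_{abab}(v)},\overline{\mathcal F_{abba}(v)}$ the scalar product is generically even (adjacency) while between two different classes it is generically odd (non-adjacency). Thus, up to the finitely many exceptions counted by the error table, $\Gamma$ is the disjoint union of three complete graphs on the three classes.

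First I would check that any class $\mathcal C$ among the three with $\vert\mathcal C\vert\geq 29$ satisfies the hypotheses of Example \ref{explequasieqcl}. For a vertex $w\in\mathcal C$, the set $(\mathcal N_\Gamma(w)\cup\{w\})\Delta\mathcal C$ consists of the non-neighbours of $w$ inside $\mathcal C$ and of the neighbours of $w$ outside $\mathcal C$. The former are diagonal errors, bounded by the diagonal entry of the error table in the row of $\mathcal C$ (which is at most $2$), and the latter are off-diagonal errors, bounded by the sum of the two remaining entries in that row (at most $4+2=6$); hence $\vert(\mathcal N_\Gamma(w)\cup\{w\})\Delta\mathcal C\vert\leq 8$. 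For a vertex $w\notin\mathcal C$, the neighbours of $w$ inside $\mathcal C$ are cross-class errors bounded by the relevant off-diagonal entry, which is at most $4$, so $\vert\mathcal N_\Gamma(w)\cap\mathcal C\vert\leq 4$. By Example \ref{explequasieqcl}, $\mathcal C$ is a $\frac{2}{7}$-quasi-equivalence class.

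Next I would use Proposition \ref{propdistalpha}. By hypothesis at least two classes, say $\mathcal C_1$ and $\mathcal C_2$, have at least $29$ elements, hence are $\frac{2}{7}$-quasi-equivalence classes. Any $\frac{2}{7}$-quasi-equivalence class $\mathcal D$ distinct from $\mathcal C_1,\mathcal C_2$ is disjoint from both, so $\mathcal D$ is contained in the third class $\mathcal C_3=\overline{\mathcal N(v)}\setminus(\mathcal C_1\cup\mathcal C_2)$ and therefore $\vert\mathcal D\vert\leq\vert\mathcal C_3\vert$; if moreover $\vert\mathcal C_3\vert\geq 29$ then $\mathcal C_3$ is itself a quasi-equivalence class and disjointness forces $\mathcal D=\mathcal C_3$. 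Consequently the $\frac{2}{7}$-quasi-equivalence classes of size at least $29$ are precisely the large members of $\{\mathcal C_1,\mathcal C_2,\mathcal C_3\}$. These are determined by $\Gamma$ alone, and the vertices they miss form the unique remaining class; this recovers the partition into $\overline{\mathcal F_{aabb}(v)},\overline{\mathcal F_{abab}(v)},\overline{\mathcal F_{abba}(v)}$ from the graph structure.

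The main point to get right is the first step: one must read off the correct row and column of the error table and confirm that the within-class and cross-class error counts respect the thresholds $8$ and $4$. The constant $29$ is exactly what forces both $8\leq\frac{2}{7}\vert\mathcal C\vert$ and the strict inequality $4<\frac{1}{7}\vert\mathcal C\vert=(1-3\cdot\frac{2}{7})\vert\mathcal C\vert$, so this threshold is the natural one for the argument.
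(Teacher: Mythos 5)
Your proof is correct and follows essentially the same route as the paper: the error table plus Example \ref{explequasieqcl} shows each class with at least $29$ elements is a $\frac{2}{7}$-quasi-equivalence class, and Proposition \ref{propdistalpha} then pins these down uniquely, with the remaining vertices forming the third class. Your version merely makes explicit the row/column bookkeeping in the error table and the small case analysis (third class large or small) that the paper's two-line proof leaves implicit.
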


\proof{} Example \ref{explequasieqcl} and the above bounds for the maximal 
number of errors (non-generic values of scalar products) show that two 
such classes with at least $29$ elements define 
$\frac{2}{7}$-quasi-equivalence
relations in $\overline{\mathcal N(v)}$. They are thus well-defined
by Proposition \ref{propdistalpha}. The third class is given by 
the remaining elements.
\endproof

\subsection{The path $\mathcal P$
associated to $(1,-1,-1,1,0,\dots,0)$}

\begin{prop}\label{propvdetholes}
Let $v$ denote the minimal vector $b_1-b_2-b_3+b_4=(1,-1,-1,1,0,0,
\dots,0)$ of a fixed lattice $L_d(h_1,\dots,h_k=d+1+k)$ satisfying the
conditions of Theorem \ref{thmautom}.

The set $h_1,\dots,h_k$ of holes is uniquely determined by 
the graph-structure on the set of equivalence 
classes $\overline{\mathcal N(v)}$ of neighbours of $v$.
\end{prop}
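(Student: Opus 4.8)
The plan is to make the set $\overline{\mathcal N(v)}$ completely explicit and then read the holes off its graph structure. First I would describe the neighbours of $v=b_1-b_2-b_3+b_4$. Since $\langle v,w\rangle=2$ and both vectors have norm $4$, a neighbour $w$ must meet the support $\{1,2,3,4\}$ of $v$ in exactly two indices, carrying there the same signs as $v$, and place its two remaining $\pm1$'s on indices $\geq5$ (an overlap in an odd number of coordinates gives an odd, hence forbidden, scalar product, while a full overlap gives $0$ or $\pm4$). Imposing the two defining conditions $\sum_i x_i=0$ and $\sum_i i\,x_i=0$ on each of the six types then yields the dictionary I want: $\overline{\mathcal F_{aabb}(v)}$ is the set of allowed index-pairs at distance $1$, $\overline{\mathcal F_{abab}(v)}$ the set of allowed index-pairs at distance $2$ (both with smaller index $\geq5$), and $\overline{\mathcal F_{abba}(v)}=\emptyset$, the last because it would require two distinct indices $\geq5$ with sum $1+4=5$.

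Next I would count and invoke the quasi-equivalence machinery of Subsection~\ref{subsectalphaequiv}. A telescoping count using $h_k=d+k+1$ gives $|\overline{\mathcal F_{aabb}(v)}|=d-k-3$, and a parallel count gives $|\overline{\mathcal F_{abab}(v)}|\geq d-k-3$; since $h_1\geq7$ and $h_{i+1}-h_i\geq4$ force $3k\leq d-2$, both classes have at least $29$ elements as soon as $d\geq46$, which is exactly where that hypothesis is spent. Example~\ref{explequasieqcl} together with the error tables then makes these $\tfrac{2}{7}$-quasi-equivalence classes, so Proposition~\ref{propclwdefinN} recovers the partition $\overline{\mathcal N(v)}=\overline{\mathcal F_{aabb}(v)}\cup\overline{\mathcal F_{abab}(v)}\cup\overline{\mathcal F_{abba}(v)}$ from the graph alone, with the empty class appearing as the leftover. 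Knowing each vertex's class, I recover the generic value of every scalar product and hence the ``error'' relation; a one-line parity check shows that two neighbours are in error precisely when their index-pairs share exactly one index (two pairs of the above shapes share at most one index, and each shared index flips the parity). This error relation is exactly the line graph $L(G)$ of the graph $G$ whose vertices are the allowed indices $\geq5$ and whose edges join indices at distance $1$ or $2$.

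It then remains to reconstruct $G$ from $L(G)$ and to extract the holes. Because holes are isolated (being spaced $\geq4$ apart), each hole $h$ is crossed by the single distance-$2$ edge $\{h-1,h+1\}$, so $G$ is connected; a reconstruction in the spirit of Whitney's line-graph theorem recovers $G$, equivalently the single path $\mathcal P$ obtained by threading the distance-$1$ runs through these distance-$2$ bridges. Traversing $\mathcal P$ lists the allowed indices in order and marks each hole as a bridge, so the hole pattern is determined up to reversal of the linear order. I expect the final anchoring to be the main obstacle: I must orient $\mathcal P$ and fix absolute positions, and this is exactly where the asymmetry of the hypotheses of Theorem~\ref{thmautom} enters. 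The top index $\omega=d+k+2$ is a leaf of $G$ whose unique neighbour $\omega-2=d+k$ lies in a triangle (the continuing run $d+k,\,d+k-1,\,d+k-2$), a local picture produced precisely by the hole $h_k=\omega-1$ sitting directly below $\omega$; no vertex near the low end (where $1,\dots,6$ are all allowed) has this configuration. This marker orients $\mathcal P$, and combined with $|V(G)|=d-2$ and $h_1\geq7$ it pins every hole uniquely. The points needing the most care are verifying that this marker distinguishes the two ends in all admissible configurations and that the error relation carries no edges beyond ``share exactly one index''.
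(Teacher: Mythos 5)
Your proposal is correct, and its first half coincides with the paper's: the explicit dictionary for $\overline{\mathcal N(v)}$ (distance-$1$ pairs for $\overline{\mathcal F_{aabb}(v)}$, distance-$2$ pairs for $\overline{\mathcal F_{abab}(v)}$, and $\overline{\mathcal F_{abba}(v)}=\emptyset$), the count $d-k-3\geq 29$, and the recovery of the partition via Example \ref{explequasieqcl} and Proposition \ref{propclwdefinN} are exactly the paper's ingredients. Where you genuinely diverge is the reconstruction step. The paper stays inside the error graph: it deletes every vertex of $\mathcal B$ lying in a triangle with two vertices of $\mathcal A$, observes that what survives of $\mathcal B$ is precisely the set $\mathcal H$ of hole-crossing pairs $\{h_i-1,h_i+1\}$, that the pruned graph $\mathcal E'$ is a path with one leaf in $\mathcal A$ and one in $\mathcal H$, orients it from the $\mathcal A$-leaf to the $\mathcal H$-leaf, inserts a midpoint vertex on each $\mathcal A\to\mathcal H$ edge, and labels the vertices increasingly by $5,\dots,d+k+1$ to read off the holes. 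You instead recognize the whole error graph as the line graph $L(G)$ of the distance-$\leq 2$ graph $G$ on allowed indices, recover $G$ by Whitney's theorem, and read the holes off as bridges, anchoring the orientation with the ``leaf whose neighbour lies in a triangle'' marker. Both routes work; I checked your two flagged points: the error relation is exactly ``share one index'' (the support part of the scalar product contributes $2$, $2$, $1$ in the three cases and a shared tail index shifts it by $\pm1$, so sharing flips the generic parity in every case), and your marker does single out $\omega=d+k+2$ (for $h_1\geq 8$ the low end of $G$ has no leaf at all, while for $h_1=7$ the vertex $5$ is a leaf but its neighbour $6$ has only the mutually non-adjacent neighbours $5$ and $8$, hence lies in no triangle). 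Your route buys one real advantage: it is symmetric in the two classes, so you never need to decide which quasi-equivalence class is $\overline{\mathcal F_{aabb}}$ and which is $\overline{\mathcal F_{abab}}$ --- a decision the paper's asymmetric pruning quietly presupposes and only pins down later through the admissibility conditions in Proposition \ref{propadmissvect}. What it costs: Whitney's theorem is an external (if standard) tool where the paper's pruning is elementary and self-contained, and the paper's specific construction (the oriented path $\mathcal P$, its length $d+k-3$, the triangle with two vertices in $\mathcal A$) is recycled verbatim as conditions (3) and (4) of admissibility and in the proof of Proposition \ref{propadmissvect}, so adopting your version would force a rewrite of that subsequent argument.
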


\proof{} We keep the notations of Section \ref{sectionneighbourhoods}
except for dropping the argument $v=(1,-1,-1,1,0,\dots)$ for 
subsets of the sets $\mathcal N=\mathcal N(v)$ or 
$\overline{\mathcal N}=\overline{\mathcal N(v)}$.

The set $\overline{\mathcal F_{abba}}$ is empty and
the conditions $h_1\geq 7, h_k=d+k+1$ and $h_{i+1}-h_i\geq 4$ 
imply that
$\mathcal A=\overline{\mathcal \mathbb F_{aabb}}$ and 
$\mathcal B=\overline{\mathcal F_{abab}}$ have both exactly $d-3-k$ elements.
Indeed, among the $k+d-3$ vectors $b_1-b_2-b_{i-1}+b_i,\ i=6,\dots,k+d+2$,
exactly $2k$ vectors of the form $b_1-b_2-b_{h_i-1}+b_{h_i},b_1-b_2-b_{h_i}+
b_{h_i+1}$ are not orthogonal to all elements $b_{h_1},\dots,b_{h_k}$.
A similar argument works for $\mathcal B$.

We construct an oriented path $\mathcal P$ as follows: 
We start with the error-graph $\mathcal E=
\mathcal E(\mathcal A\cup
\mathcal B\subset \overline{\mathcal N})$ of 
$\overline{\mathcal N}$. We denote by $\mathcal E'$ the subgraph 
of $\mathcal E$ obtained by removing all vertices of 
$\mathcal B=\overline{\mathcal F_{abab}}$ which
are involved in a triangle with the two remaining
vertices in $\mathcal A=\overline{\mathcal \mathbb F_{aabb}}$. 
Such triangles are given
by three vectors $b_1-b_3,b_{l-2}+b_l\in\mathcal B,\ b_1-b_2-b_{l-2}+b_{l-1},\ b_1-b_2-b_{l-1}+b_l\in \mathcal A$ with $\{l-2,l-1,l\}\subset\{5,\dots,d+k\}$ not intersecting $\{h_1,\dots,
h_k\}$.) We denote by $\mathcal H$ the remaining set of 
$k$ vertices of $\mathcal B$. Elements of $\mathcal H$
are of the form $b_1-b_3-b_{h_i-1}+b_{h_i+1}$ and 
correspond to the $k$ holes $h_1,h_2,\dots$. The graph $\mathcal E'$ 
is a path-graph (or segment, or Dynkin graph of type $A$) 
having one leaf (vertex of degree one)
in $\mathcal A$ and one leaf in $\mathcal H\subset \mathcal B$.
We orient its edges in order to get an oriented path 
$$\begin{array}{cc}
b_1-b_2-b_5+b_6,b_1-b_2-b_6+b_7,\dots,b_1-b_2-b_{h_1-2}+b_{h_1-1},\\
b_1-b_3-b_{h_1-1}+b_{h_1+1},b_1-b_2-b_{h_1+1}+b_{h_1+2},\dots,
b_1-b_2-b_{h_2-2}+b_{h_2-1},\\
\vdots\\
b_1-b_3-b_{h_{k-1}-1}+b_{h_{k-1}+1},b_1-b_2-b_{h_{k-1}+1}+b_{h_{k-1}+2},\dots,\\
,\dots,b_1-b_2-b_{d+k-1}+b_{d+k},b_1-b_3-b_{d+k}+b_{d+k+2}
\end{array}$$
starting at the  
vertex $b_1-b_2-b_5+b_6$ of $\mathcal A$
and ending at the vertex $b_1-b_3-b_{d+k}+b_{d+k+2}$ of
$\mathcal H\subset\mathcal B$. 

The final path $\mathcal P$ is obtained from $\mathcal E'$ as follows:
We add $k$ additional vertices to the vertex-set $\mathcal A$
by considering the midpoints 
of the $k$ oriented edges $(b_1-b_2-b_{h_i-2}+b_{h_i-1},b_1-b_3-b_{h_i-1}+b_{h_i+1})$
which start in $\mathcal A$ and end in $\mathcal H$. These oriented 
edges are well-defined since two distinct vertices 
$b_1-b_3-b_{h_i-1}+b_{h_i+1},b_1-b_3-b_{h_j-1}+b_{h_j+1}$
of $\mathcal H$ are never adjacent 
in $\mathcal E'$ (they are always separated by at least two vertices 
$b_1-b_2-b_{h_i+1}+b_{h_i+2},b_1-b_2-b_{h_i+2}+b_{h_i+3}\in \mathcal A$ if $i<j$) and
since the initial vertex of $\mathcal E'$ does not belong to $\mathcal H$.
We label now the $d-3+k=(d-k-3+k)+k$ vertices of $\mathcal P$ increasingly by 
$5,6,\dots,d+k+1$. The set of labels of the $k$
vertices in $\mathcal H$ defines now the sequence $h_1,\dots,h_k=d+k+1$.
\endproof

\subsection{Admissible vectors}

\begin{lem} \label{lemerrorsinclasses} Both sets $\mathcal A=
\overline{\mathcal F_{aabb}(v)}$ and 
$\mathcal B=\overline{\mathcal F_{abab}(v)}$ have errors for 
$v=b_1-b_2-b_3+b_4=(1,-1,-1,1,0,\dots)$ 
in a lattice $L_d(h_1,\dots,h_k=d+1+k)$
satisfying the conditions of Theorem \ref{thmautom}.
\end{lem}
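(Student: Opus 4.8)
The plan is to exhibit explicit pairs of minimal vectors realising a non-generic (odd) scalar product inside each of the two classes $\mathcal A=\overline{\mathcal F_{aabb}(v)}$ and $\mathcal B=\overline{\mathcal F_{abab}(v)}$. First I would record the explicit parametrisations of the two classes, exactly as in the proof of Proposition \ref{propvdetholes}. With $v=b_1-b_2-b_3+b_4$, a short case analysis of the conditions $\langle v,w\rangle=2$, $\sum x_i=0$ and $\sum i\,x_i=0$ shows that the representatives in $\mathcal F_{**00}$ are precisely the vectors $w_p=b_1-b_2-b_p+b_{p+1}$, and those in $\mathcal F_{*0*0}$ are precisely the vectors $u_p=b_1-b_3-b_p+b_{p+2}$, where in each case $p\geq 5$ and all occurring indices avoid the holes $\{h_1,\dots,h_k\}$ and lie in the support range $\{1,\dots,d+k+2\}$.

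Next I would locate the errors by directly computing scalar products of two representatives of the same class. For $\mathcal A$ one finds $\langle w_p,w_{p'}\rangle=2$ (the generic even value) whenever $p'>p+1$, while $\langle w_p,w_{p+1}\rangle=1$, since the only tail overlap sits at index $p+1$ with opposite signs; hence every consecutive pair $w_p,w_{p+1}$ is an error. For $\mathcal B$ the analogous computation gives $\langle u_p,u_{p'}\rangle=2$ except for $\langle u_p,u_{p+2}\rangle=1$, the unique overlap now sitting at index $p+2$ with opposite signs; hence the pairs $u_p,u_{p+2}$ are errors. In both cases the two representatives lie in the same half ($\mathcal F_{**00}$, respectively $\mathcal F_{*0*0}$), are distinct, and therefore represent two distinct classes of the same type, so the value $1$ is genuinely non-generic.

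It then remains to check that such a pair actually occurs in the lattice, i.e. that all indices involved are admissible. For $\mathcal A$ this asks for three consecutive non-holes $p,p+1,p+2$ with $5\leq p\leq d+k$, and for $\mathcal B$ for three non-holes $p,p+2,p+4$ in arithmetic progression with $5\leq p\leq d+k-2$. I expect this existence step to be the only point requiring care, and I would handle it by a crude count: each hole forbids at most three values of the starting index $p$, so at most $3k$ values are excluded, whereas the gap condition $h_{i+1}-h_i\geq 4$ together with $h_1\geq 7$ and $h_k=d+k+1$ gives $d-2\geq 3k$, that is $k\leq(d-2)/3$. For $d\geq 46$ the number of candidate starting indices ($d+k-4$ for $\mathcal A$, $d+k-6$ for $\mathcal B$) then exceeds the number $\leq 3k$ of forbidden ones, so an admissible $p$ exists for each class. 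This produces an error in $\mathcal A$ and an error in $\mathcal B$, which is the assertion of the Lemma.
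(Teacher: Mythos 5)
Your proposal is correct, and its core computation is the same as the paper's: an error in $\mathcal A$ is realised by a pair $w_p=b_1-b_2-b_p+b_{p+1}$, $w_{p+1}$ with $\langle w_p,w_{p+1}\rangle=1$, and an error in $\mathcal B$ by a pair $u_p=b_1-b_3-b_p+b_{p+2}$, $u_{p+2}$ with $\langle u_p,u_{p+2}\rangle=1$. The only difference is how the admissible starting index $p$ is produced. You establish existence by a pigeonhole count (at most $3k$ forbidden values of $p$ against $d+k-4$, resp.\ $d+k-6$, candidates, using $3k\leq d-2$), which is valid but unnecessary: the paper simply takes the pairs at the top of the index range, namely $b_1-b_2-b_{d+k-2}+b_{d+k-1},\ b_1-b_2-b_{d+k-1}+b_{d+k}$ for $\mathcal A$ and $b_1-b_3-b_{d+k-2}+b_{d+k},\ b_1-b_3-b_{d+k}+b_{d+k+2}$ for $\mathcal B$. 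There the hypotheses $h_k=d+k+1$ and $h_{k-1}\leq h_k-4=d+k-3$ guarantee outright that all indices involved ($d+k-2$, $d+k-1$, $d+k$, $d+k+2$) avoid the holes, so no counting is needed. Your route costs a little extra work but has the mild benefit of showing that such error pairs exist in abundance throughout the index range, not just at its end.
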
 

Recall that an error in $\mathcal A$ (respectively $\mathcal B$) is given 
by two minimal vectors $s,t$ in $\mathcal A$ (respectively in
$\mathcal B$) having an odd scalar 
product $\langle s,t\rangle\equiv 1\pmod 2$. 

\proof[Proof of Lemma \ref{lemerrorsinclasses}]
An error in $\mathcal A$ is realised by
$b_1-b_2-b_{d+k-2}+b_{d+k-1},b_1-b_2-b_{d+k-1}+b_{d+k}$.

An error in $\mathcal B$ is realised by
$b_1-b_3-b_{d+k-2}+b_{d+k},b_1-b_3-b_{d+k}+b_{d+k+2}$.
\endproof

\begin{lem}\label{lemupperbdk} We have the inequality $k\leq 
\lfloor(d-2)/3\rfloor$ for the number $k$ of holes in a $d$-dimensional
lattice $L_d(h_1,\dots,h_k=d+k+1)$ satisfying the conditions of 
Theorem \ref{thmautom}.
\end{lem}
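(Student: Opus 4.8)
The plan is to bound the total spread $h_k-h_1$ of the holes from below using the gap condition, and from above using the two boundary conditions, and then to compare the two estimates. First I would telescope the gaps: since $h_1<h_2<\dots<h_k$ with every consecutive difference satisfying $h_{i+1}-h_i\geq 4$, summing the $k-1$ gaps gives the lower bound $h_k-h_1=\sum_{i=1}^{k-1}(h_{i+1}-h_i)\geq 4(k-1)$.

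Next I would bring in the remaining two hypotheses of Theorem \ref{thmautom}. The condition $h_k=d+k+1$ together with $h_1\geq 7$ yields the upper bound $h_k-h_1=(d+k+1)-h_1\leq (d+k+1)-7=d+k-6$. Combining this with the telescoped lower bound gives $4(k-1)\leq d+k-6$, which rearranges to $3k\leq d-2$, hence $k\leq (d-2)/3$. Since $k$ is an integer, this forces the stated inequality $k\leq\lfloor(d-2)/3\rfloor$.

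I do not expect any genuine obstacle here: the argument is essentially a one-line packing estimate, and the standing hypothesis $d\geq 46$ is not even needed for the conclusion. The only points requiring a little care are to invoke each of the three structural conditions exactly once — the minimal gap $4$ for the lower bound on the spread, and the boundary values $h_1\geq 7$ and $h_k=d+k+1$ for the upper bound — and to remember to round down at the very end because $k\in\mathbb{Z}$.
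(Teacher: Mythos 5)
Your proof is correct: the telescoping estimate $4(k-1)\leq h_k-h_1\leq (d+k+1)-7$ gives $3k\leq d-2$, hence $k\leq\lfloor(d-2)/3\rfloor$, and this is precisely the pigeon-hole/packing argument the paper alludes to (it leaves the proof to the reader, noting only that equality is attained for $h_i=3+4i$, consistent with your bound). Your observation that $d\geq 46$ plays no role here is also accurate.
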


Equality is achieved for $h_i=3+4i$ for $i=1,2,\dots$. 

We leave the easy proof
(based on the pigeon-hole principle) to the reader.

A minimal vector $u$ of a lattice $L_d(h_1,\dots,h_k=d+k+1)$ 
satisfying the conditions of Theorem \ref{thmautom} is called 
\emph{admissible} if the following conditions hold:
\begin{enumerate}
\item{} $\overline{N(u)}$ has an even number of at least
$2\lceil (2d-7)/3\rceil$ elements defining two
$\frac{2}{7}$-quasi-equivalence classes $\mathcal A,\mathcal B$ 
of equal size.
\item{} Both classes $\mathcal A,\mathcal B$ contain errors
(i.e. do not define complete subgraphs of $\overline{\mathcal N(u)}$). 
\item{} The error graph $\mathcal E(\mathcal A\cup\mathcal B\subset 
\overline{\mathcal N(u)})$ contains a triangle with two vertices in 
$\mathcal A$ and one vertex in $\mathcal B$.
\item{} The construction 
of the oriented path $\mathcal P$ explained in the proof
of Proposition \ref{propvdetholes} works and yields an oriented path 
with $2d-6-\vert\overline{\mathcal N(u)}\vert/2$ vertices.
\end{enumerate}

The proof of Theorem \ref{thmautom} follows now from the following result:

\begin{prop}\label{propadmissvect}
A lattice $L_d(h_1,\dots,h_k=d+k+1)$ satisfying the conditions of Theorem \ref{thmautom} 
contains a unique pair $\pm v$ of admissible minimal vectors.
\end{prop}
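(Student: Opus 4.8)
The plan is to prove that the defining conditions single out exactly the pair $\pm v$ with $v=b_1-b_2-b_3+b_4$, the distinguished leftmost vector already analysed in Proposition~\ref{propvdetholes}. Since $\overline{\mathcal N(u)}$ and its graph-structure are unchanged under $u\mapsto -u$, admissibility depends only on the unordered pair $\pm u$, so one representative suffices. First I would check that $v$ is admissible. Conditions~(2) and~(3) are already at hand: Lemma~\ref{lemerrorsinclasses} produces errors in both $\mathcal A=\overline{\mathcal F_{aabb}}(v)$ and $\mathcal B=\overline{\mathcal F_{abab}}(v)$, while the $\mathcal A\mathcal A\mathcal B$-triangles deleted in passing from $\mathcal E$ to $\mathcal E'$ in the proof of Proposition~\ref{propvdetholes} supply condition~(3) (such a triangle exists because a run of three consecutive non-hole indices occurs, $d$ being large and $k$ bounded). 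For condition~(1), Proposition~\ref{propvdetholes} gives $\overline{\mathcal F_{abba}}(v)=\emptyset$ and $|\mathcal A|=|\mathcal B|=d-3-k$; with Lemma~\ref{lemupperbdk} ($k\le\lfloor(d-2)/3\rfloor$) this yields $d-3-k\ge\lceil(2d-7)/3\rceil\ge 29$ for $d\ge 46$, so by Example~\ref{explequasieqcl} and Proposition~\ref{propclwdefinN} the sets $\mathcal A,\mathcal B$ are genuine $\frac{2}{7}$-quasi-equivalence classes of equal size partitioning $\overline{\mathcal N(v)}$. Condition~(4) is the conclusion of Proposition~\ref{propvdetholes}: the path $\mathcal P$ has $d-3+k=2d-6-|\overline{\mathcal N(v)}|/2$ vertices.

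For uniqueness I would take an arbitrary minimal vector, written up to sign as $u=b_{i_1}-b_{i_2}-b_{i_3}+b_{i_4}$ with $i_1<i_2<i_3<i_4$ and $i_1+i_4=i_2+i_3$, and translate the conditions into combinatorics of the support relative to the holes. The key remark is that a neighbour in $\mathcal F_{*00*}(u)$ is exactly a minimal vector $b_{i_1}+b_{i_4}-b_a-b_c$ with $a+c=i_1+i_4$, the indices $a,c$ being non-holes outside the support; since $\iota$ identifies $\mathcal F_{*00*}(u)$ with $\mathcal F_{0**0}(u)$, the class $\overline{\mathcal F_{abba}}(u)$ is empty precisely when the only such representation of $i_1+i_4$ is the inner pair $\{i_2,i_3\}$. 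Counting representations then shows this forces simultaneously a small span $i_4-i_1\le 4$ (to exclude intermediate interior pairs) and the left-anchoring $i_1=1$ (to exclude every straddling pair $a<i_1<i_4<c$), apart from finitely many right-end configurations where the index range or the holes block the straddles. On the other side, the classes $\overline{\mathcal F_{aabb}}(u)$ and $\overline{\mathcal F_{abab}}(u)$ are always large, their neighbours being obtained by sliding a pair of fixed gap along the lattice; hence, if $u$ is admissible, condition~(1) together with Propositions~\ref{propclwdefinN} and~\ref{propdistalpha} forces $\overline{\mathcal F_{abba}}(u)=\emptyset$, since otherwise a third quasi-equivalence class would appear and $\overline{\mathcal N(u)}$ could not split into two equal quasi-equivalence classes.

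Combining the two observations leaves, besides $v$, only the span-four vector $b_1-b_2-b_4+b_5$ and the finitely many right-end exceptions. The normalisation $h_k=d+k+1$ disposes of the latter, as they carry a hole in their support or among their neighbours' indices, and the equal-size requirement together with the exact vertex-count $2d-6-|\overline{\mathcal N(u)}|/2$ of condition~(4) disposes of $b_1-b_2-b_4+b_5$, whose gapped support makes the error-graph $\mathcal E'$ either unbalanced or not a single oriented path. The main obstacle is precisely this endgame: one must express $|\mathcal A|$, $|\mathcal B|$ and $|\overline{\mathcal F_{abba}}(u)|$ explicitly in terms of $i_1$, the span $i_4-i_1$ and the hole pattern, and then verify that conditions~(1) and~(4) are jointly violated for every $u\ne\pm v$. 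The delicate point is to prove that $\mathcal E'$ genuinely fails to be an oriented path of the prescribed length for the remaining near-boundary impostors, rather than merely that some numerical count comes out wrong.
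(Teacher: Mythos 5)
Your existence half (admissibility of $v=b_1-b_2-b_3+b_4$) matches the paper's argument step for step and is fine. The uniqueness half is where the problems lie, and it is also where you depart from the paper. The paper's route is: conditions (1)--(2) force $\{\mathcal A,\mathcal B\}=\{\overline{\mathcal F_{aabb}},\overline{\mathcal F_{abab}}\}$, hence $\overline{\mathcal F_{abba}}(u)=\emptyset$; condition (3) is then invoked to pin $u$ down to the form $b_a-b_{a+p}-b_{a+2p}+b_{a+3p}$; finally $p\geq 3$ is excluded by exhibiting an unblocked pair $\{a+p-\epsilon,a+2p+\epsilon\}$, $p=2$ by disconnectedness of the graph $\mathcal E'$ of condition (4), and $p=1$, $a\neq 1$ by a delicate discussion of connectivity of $\mathcal E'$, of which set contains its leaves, and of dimension counts (the sub-cases $a=2$, $h_1=7$ and $a=d+k-3$, $h_{k-1}=d+k-5$). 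You bypass condition (3) and instead try to read the support of $u$ directly off $\overline{\mathcal F_{abba}}(u)=\emptyset$ by counting representations of $i_1+i_4$. That is a legitimate alternative skeleton, but your intermediate claim is not correct as stated: emptiness does \emph{not} force ``span $\leq 4$ and $i_1=1$ up to finitely many right-end configurations''. Interior and straddling pairs can be blocked by holes away from the right end; for instance $u=b_2-b_3-b_5+b_6$ in a lattice with $h_1=7$ has $\overline{\mathcal F_{abba}}(u)=\emptyset$ (its unique straddle $\{1,7\}$ hits the hole $7$), is neither left-anchored nor right-end, carries no hole in its support, and moreover satisfies your conditions (2) and (3) as well --- triangles such as $\bigl(b_2-b_3-b_x+b_{x+1},\,b_2-b_3-b_{x+1}+b_{x+2},\,b_2-b_5-b_{x-2}+b_{x+1}\bigr)$ exist for \emph{any} support shape, which incidentally is why condition (3) alone cannot do the work you delegate to it implicitly, nor can it be skipped silently. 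Such an impostor survives conditions (1)--(3) whenever $d\geq 3k+5$ and is killed only by condition (4).

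This brings out the decisive gap: you explicitly defer the endgame, namely the verification that every surviving candidate --- $b_1-b_2-b_4+b_5$, the hole-blocked $i_1=2$ candidates, and the right-end ones --- violates condition (1) or condition (4). But that verification \emph{is} the uniqueness proof; it cannot be waved through with ``the normalisation $h_k=d+k+1$ disposes of the latter''. One must actually show, candidate by candidate, that the graph $\mathcal E'$ is disconnected (e.g.\ because the would-be hole-markers of $\mathcal B$ fail to lie in the retained set $\mathcal H$, so the $\mathcal A$-segments on either side of a hole cannot be bridged) or that its vertex count differs from $2d-6-\vert\overline{\mathcal N(u)}\vert/2$. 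This is exactly the content of the paper's case analysis, and your own closing paragraph concedes it is missing. As it stands, your text is a plausible plan with a correct first half, an inaccurate reduction in the second half, and the essential case-by-case verification left undone.
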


\proof{}
We show first that 
$v=v_c=b_1-b_2-b_3+b_4=(1,-1,-1,1,0,0,\dots,0)$ is admissible. 
The sets $\mathcal A=\overline{\mathcal F_{aabb}(v)}$ and 
$\mathcal B=\overline{\mathcal F_{abab}(v)}$ contain both 
$d-3-k$ elements. Lemma \ref{lemupperbdk} yields 
$d-3-k\geq \lceil (2d-7)/3\rceil\geq 29$ for $d\geq 46$.
The sets $\mathcal A$ and $\mathcal B$ are thus $\frac{2}{7}$-quasi
equivalence classes by Example \ref{explequasieqcl}. (See also the proof of
Proposition \ref{propclwdefinN}.)

Lemma \ref{lemerrorsinclasses} shows that both classes $\mathcal A$ 
and $\mathcal B$ contain errors.

A triangle in the error graph $\mathcal E(\mathcal A\cup\mathcal B\subset 
\overline{\mathcal N(v)})$ with two vertices in $\mathcal A$ and a 
last vertex in $\mathcal B$ is defined by 
$b_1-b_2-b_{d+k-2}+b_{d+k-1},b_1-b_2-b_{d+k-1}+b_{d+k},b_1-b_3-b_{d+k-2}+b_{d+k}$.

The last condition is fulfilled as shown by the proof of 
Proposition \ref{propvdetholes} and yields the path 
$\mathcal P$ with $d+k-3=2d-6-(d-k-3)$ vertices.

We consider now an admissible minimal vector $u$ of 
$L_d(h_1,\dots,h_k=d+k+1)$. Condition (1) for admissibility
shows that $\overline{\mathcal N(u)}$ consists of two 
$\frac{2}{7}$-quasi-equivalence classes $\mathcal A,\mathcal B$
containing both at least $\lceil (2d-7)/3\rceil\geq \lceil (2\cdot 46-7)/3
\rceil=29$ elements. By Proposition \ref{propclwdefinN}, 
they define thus two of the three classes
$\overline{\mathcal F_{aabb}},\overline{\mathcal F_{abab}},
\overline{\mathcal F_{abba}}$. 

Since $\overline{\mathcal F_{abba}}$
does not contain internal errors, these classes are
$\overline{\mathcal F_{aabb}}$ and $\overline{\mathcal F_{abab}}$.

Condition (3) for admissibility implies that $\mathcal A=\overline{
\mathcal F_{aabb}},\ \mathcal B=\overline{
\mathcal F_{abab}}$ and that $u$ is of 
the form $b_a-b_{a+p}-b_{a+2p}+b_{a+3p}$ for some strictly positive integers 
$a$ and $p$. We can exclude $p\geq 3$: 
Indeed, both sets $\{a+p-2,a+p-1,a+p+1\}$
and $\{a+2p-1,a+2p+1,a+2p+2\}$ intersect
$\{h_1,\dots,h_k=d+k+1\}$ in at most a single element. For $p\geq 3$,
there exists thus $\epsilon\in \{2,1,-1\}$ such that
the lattice $L_d(h_1,\dots)$ contains the minimal vector
$b_a-b_{a+p-\epsilon}-b_{a+2p+\epsilon}+b_{a+3p}$ in contradiction with
$\overline{\mathcal F_{abba}}=\emptyset$. 
In order to exclude $p=2$ we use condition (4): Suppose indeed that $p=2$.
If there exists two elements $w_i=b_a-b_{a+2}-b_{a_i-2}+b_{a_i},i=1,2$ in 
$\overline{\mathcal F_{aabb}}$ with $a_1\not\equiv
a_2\pmod 2$, then the construction of the oriented path
of Proposition \ref{propvdetholes} fails since $w_1$ and $w_2$ 
define vertices in two different connected components of the 
intermediary graph $\mathcal E'$. 
Two such elements $w_1,w_2$ do not necessarily exist if the sequence
of holes consist of at most two arithmetic progressions of step $4$.
In this case, there exist two elements 
$t_i=b_a-b_{a+4}-b_{c_i-4}+b_{c_i},i=1,2$ in 
$\overline{\mathcal F_{abab}}$ with $c_1\not\equiv
c_2\pmod 4$, also leading to a disconnected graph $\mathcal E'$.
(The case $p=2$ can also be excluded by showing that it cannot
lead to a path $\mathcal P$ having the correct length.)

We have thus $p=1$ and $u$ is of the form $u=b_a-b_{a+1}-b_{a+2}+b_{a+3}$.
The intermediary graph $\mathcal E'$ constructed in the proof of 
Proposition \ref{propvdetholes} is connected if an only if 
$a\in \{1,2,d+k-3\}$. The case $a=2$ and $h_1=7$ leads to a graph $\mathcal E'$
with both leaves in $\mathcal H\subset\overline{\mathcal F_{abab}}$
and is thus excluded. The 
case $a=2$ and $h>7$ leads to a sequence of holes
defining the lattice $L_{d-1}(h_1-1,h_2-1,\dots,h_k+d)$ which is only
$(d-1)$-dimensional. The case $a=d+k-3$ leads to a graph $\mathcal E'$
having both leaves in $\mathcal A$ except if $h_{k-1}=d+k-5$.
This last case leads also to a sequence defining a lattice of dimension 
$d-1$ as follows: The corresponding sets $\overline{\mathcal F_{aabb}}$
and $\overline{\mathcal F_{abab}}$ have both $d-3-k$ elements. 
They lead to a path of length $d-3-k+2(k-1)=(d-1)+(k-1)-3$
defining a set $h_1,\dots,h_{k-1}$ of $k-1$ holes for 
a lattice of dimension $d-1$.
\endproof

\subsection{Proof of Theorem \ref{thmautom}}

\proof[Proof of Theorem \ref{thmautom}] 
We construct the set of minimal vectors 
(of a lattice as in Theorem \ref{thmautom})
and use it 
for determining all minimal admissible vectors.
They form a unique pair by Proposition \ref{propadmissvect}.
This pair corresponds thus to the vectors $\pm (1,-1,-1,1,0,\dots)$
which determine the set $h_1,\dots$ of holes uniquely by 
Proposition \ref{propvdetholes}.
\endproof

\section{A linear recursion}\label{sectalpha}

We denote by $\alpha(n)$ the number of strictly increasing 
integer-sequences of length $n$ which 
start with $1$ and which have no missing integers 
at distance strictly smaller than
$6$. A sequence $s_1=1,s_2,\dots,s_n$ contributes thus $1$ to $\alpha_n$
if and only if $s_{i+1}-s_i\in\{1,2\}$ for $i=1,\dots,n-1$ and
$s_{i+1}-s_i=2,s_{j+1}-s_j=2$ for $1\leq i<j\leq n-1$ implies $s_j-s_i\geq 5$.

\begin{prop}\label{propalphan} We have $\alpha(i)=i$ for $i=1,\dots,6$. For $n\geq 6$
we have the recursion
$$\alpha(n)=\alpha(n-1)+\alpha(n-5).$$

In particular, the sequence $\alpha(1),\alpha(2),\dots$ grows 
exponentially fast.
\end{prop}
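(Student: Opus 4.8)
The plan is to encode each admissible sequence by its vector of consecutive differences, translate the separation requirement into a combinatorial condition on the positions of the steps of size $2$, and then extract the recursion by conditioning on the last difference. First I would reformulate: an admissible sequence $s_1=1<s_2<\dots<s_n$ is determined by $(d_1,\dots,d_{n-1})\in\{1,2\}^{n-1}$ with $d_i=s_{i+1}-s_i$; call $i$ a \emph{gap} when $d_i=2$, so that the integer $s_i+1$ is missing. For two consecutive gaps at positions $i<j$ all intermediate differences equal $1$, hence $s_j-s_i=(j-i)+1$, and the two missing integers $s_i+1,s_j+1$ lie at distance $(j-i)+1$. Forbidding missing integers at distance strictly smaller than $6$ therefore amounts exactly to $j-i\geq 5$ for consecutive gaps; separations of non-consecutive gaps are sums of such quantities and are automatically larger, so it suffices to control consecutive gaps.

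Next I would dispose of the base cases. For $n\leq 6$ the gap positions lie in $\{1,\dots,n-1\}$ with $n-1\leq 5$, so no two of them can differ by $5$ or more; an admissible sequence thus has at most one gap. Counting the single gapless vector together with the $n-1$ vectors having exactly one gap gives $\alpha(n)=1+(n-1)=n$ for $n=1,\dots,6$.

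The core step is the recursion, obtained by conditioning on $d_{n-1}$. If $d_{n-1}=1$, deleting $s_n$ gives a bijection onto admissible sequences of length $n-1$, since appending a step of size $1$ creates no gap; this accounts for $\alpha(n-1)$. If $d_{n-1}=2$, the terminal gap at position $n-1$ forces $d_{n-2}=d_{n-3}=d_{n-4}=d_{n-5}=1$, for a gap at any of these positions would sit within index-distance $<5$ of position $n-1$. The truncated vector $(d_1,\dots,d_{n-6})$ is then an arbitrary admissible sequence of length $n-5$, and conversely appending $1,1,1,1,2$ to any admissible length-$(n-5)$ sequence stays admissible, since its last gap sits at a position $\leq n-6$, at distance $\geq 5$ from $n-1$. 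These truncation and extension maps are mutually inverse, contributing $\alpha(n-5)$, and so $\alpha(n)=\alpha(n-1)+\alpha(n-5)$ for $n\geq 6$.

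Finally, for exponential growth I would note that the base values are positive and the recursion forces $\alpha$ to be strictly increasing, whence $\alpha(n)=\alpha(n-1)+\alpha(n-5)\geq 2\,\alpha(n-5)$ and therefore $\alpha(n)\geq 2^{\lfloor(n-1)/5\rfloor}$; equivalently the characteristic polynomial $x^5-x^4-1$ has a unique real root in $(1,2)$, which governs the asymptotics. The hard part is the case $d_{n-1}=2$: the argument hinges on verifying precisely that a terminal gap forces the four preceding differences to be $1$ and that truncation and appending are genuinely inverse bijections; the base cases and the growth estimate are then routine.
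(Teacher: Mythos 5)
Your proof is correct and takes essentially the same route as the paper's: both condition on the last difference, matching sequences ending in a step of size $1$ with $\alpha(n-1)$ and sequences ending in the forced block of differences $1,1,1,1,2$ with $\alpha(n-5)$, both count the at-most-one-hole sequences for the base cases, and your doubling bound $\alpha(n)\geq 2\alpha(n-5)$ is just a more elementary substitute for the paper's explicit solution of the recursion. One remark: your translation of admissibility ($j-i\geq 5$ for consecutive gaps, equivalently $s_j-s_i\geq 6$) follows the paper's informal description ``no missing integers at distance strictly smaller than $6$'' and is the reading under which the proposition is true — the paper's formal restatement ``$s_j-s_i\geq 5$'' is an off-by-one slip, since under that condition one would get $\alpha(6)=7$ (e.g.\ the sequence $1,3,4,5,6,8$ would be admissible) and the recursion $\alpha(n)=\alpha(n-1)+\alpha(n-4)$ instead.
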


The first few terms of $\alpha(1),\dots$ are given by
$$1,2,3,4,5,6,8,11,15,20,26,\dots.$$

\proof[Proof of Proposition \ref{propalphan}] 
The sequence $1,2,3,4,\dots,n$ is the unique contribution to 
$\alpha(n)$ without holes 
(missing integers in the set $\{1=s_1,s_2,\dots,s_n\}$) and
there are $n-1$ sequences (contributing $1$ to $\alpha(n)$)
with exactly $1$ hole. Since contributions to $\alpha(n)$ for $n\leq 6$ have
at most one hole, we get the formula $\alpha(i)=i$ for $i=1,\dots,6$.

For $n\geq 6$, a contribution of $1$ to $\alpha(n)$ is either given
by $1,\dots,s_{n-1}=\omega-1,s_n=\omega$ for some integer $\omega\geq n$ and 
such contributions are in one-to-one correspondence with 
contributions to $\alpha(n-1)$ (erase the last term) or it is of the 
form 
$$1,\dots,s_{n-5}=\omega-6,\omega-5,\omega-4,s_{n-2}=\omega-3,s_{n-1}=\omega-2,s_n=\omega$$
with $s_1=1,\dots,s_{n-5}=\omega-6$ an arbitrary contribution to $\alpha(n-5)$.

The sequence $\alpha(1),\dots$ 
satisfies the linear recursion $\alpha(n)=\alpha(n-1)+\alpha(n-5)$
with characteristic polynomial $P=z^5-z^4-1=(z^2-z+1)(z^3-z-1)$.
Straightforward (but tedious) computations show
$$\alpha(n)=\frac{1}{7}\pi(n)+\frac{1}{161}\sum_{\rho,\rho^3-\rho=1}(45\rho^n+
61\rho^{n+1}+36\rho^{n+2})$$
where $\pi(n)=(-1-2/\sqrt{-3})e^{in\pi/3}+(-1+2/\sqrt{-3})e^{-in\pi/3}$
is $6$-periodic given by
$$\begin{array}{c||c|c|c|c|c|c}
n=&6k&6k+1&6k+2&6k+3&6k+4&6k+5\\
\hline
\pi(n)=&-2&-3&-1&2&3&1
\end{array}$$
and where the sum is over the three roots of the polynomial $z^3-z-1$.
The sequence $\alpha_n$ satisfies thus
$\lim_{n\rightarrow\infty}\frac{\alpha(n)}{\rho^n}=\frac{45+
61\rho+36\rho^2}{161}$
where $\rho\sim1.324718$ is the unique real root of $z^3-z-1$.
\endproof

\begin{rem} (i) Setting
$\tilde\alpha(i)=1+(i-1)t$ for $i=1,\dots,6$ and 
$\tilde\alpha(n)=\tilde\alpha(n-1)+t\tilde\alpha(n-5), n\geq 6$
we get a sequence $\tilde\alpha(1),\tilde\alpha(2),\dots$ specialising 
to $\alpha(n)$ at $t=1$ with coefficients counting contributions to 
$\alpha(n)$ according to their number of holes (i.e. the coefficient
of $t^k$ in $\tilde\alpha(n)$ is the number of sequences
contributing to $\alpha(n)$ which are of the form 
$s_1=1,s_2,\dots,s_n=n+k$).

(ii) A trivial exponential 
lower bound on $\alpha(n)$ can be obtained as follows:
Remove either the $6$-th or the $7$-th element in 
every set $\{7l+1,7l+2,\dots,7l+6,7l+7\}$.
This shows $\alpha(n)\geq 2^{\lfloor n/6\rfloor}$.
\end{rem}

\section{An exponentially large family of perfect lattices}

\begin{prop}\label{propauxexp} The number of $d$-dimensional lattices satisfying the 
conditions of Theorem \ref{thmperf}  and of Theorem \ref{thmautom}
equals $\alpha(d-8)$ for $d\geq 46$ where $\alpha(1),\dots$ is the 
sequence of Section \ref{sectalpha} recursively defined
by $\alpha(n)=n$ for $n=1,\dots,5$
and by $\alpha(n)=\alpha(n-1)+\alpha(n-5)$ for $n\geq 6$. 
\end{prop}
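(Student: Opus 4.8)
The plan is to reduce the statement to a purely combinatorial count of admissible hole-sequences and then to match that count with $\alpha(d-8)$. First I would collect the hypotheses of the two theorems into a single set of conditions on the holes $h_1<h_2<\dots<h_k$. Theorem \ref{thmperf} demands $h_{i+1}-h_i\ge 6$, while Theorem \ref{thmautom} demands $h_1\ge 7$, $h_k=d+k+1$ and $h_{i+1}-h_i\ge 4$; since $6\ge 4$, the combined conditions are simply $h_1\ge 7$, $h_{i+1}-h_i\ge 6$ for all $i$, and $h_k=d+k+1$, with $d\ge 46$. Every sequence meeting these conditions yields a lattice that is perfect of minimum $4$ by Theorem \ref{thmperf}, and, by the ``if and only if'' in Theorem \ref{thmautom}, distinct admissible sequences give pairwise non-isomorphic lattices while the same lattice cannot arise from two distinct admissible sequences. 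Hence counting the lattices of the family is the same as counting admissible hole-sequences, and the normalisation $h_k=d+k+1$ guarantees that each isomorphism class is counted exactly once (so essential isomorphism causes no over-counting).

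Next I would count admissible sequences by passing to gap coordinates. Writing $g_0=h_1$ and $g_i=h_{i+1}-h_i$ for $1\le i\le k-1$, the constraints become $g_0\ge 7$, $g_i\ge 6$, together with the single linear relation $g_0+g_1+\dots+g_{k-1}=h_k=d+k+1$. Substituting $g_0=7+a_0$ and $g_i=6+a_i$ with $a_j\ge 0$ turns this into $a_0+\dots+a_{k-1}=d-5k$, a standard stars-and-bars problem in $k$ non-negative variables. Thus the number of admissible sequences with exactly $k$ holes is $\binom{d-4k-1}{k-1}$, and summing over $k\ge 1$ gives
$$N(d)=\sum_{k\ge 1}\binom{d-4k-1}{k-1}=\sum_{m\ge 0}\binom{(d-5)-4m}{m}.$$

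It then remains to identify $N(d)$ with $\alpha(d-8)$. The cleanest route is to observe that $\sum_{m}\binom{N-4m}{m}$ is exactly the number $f(N)$ of compositions of $N$ into parts equal to $1$ or $5$ (each part $5$ contributing one index to the chosen $m$-subset), so that $f$ obeys $f(N)=f(N-1)+f(N-5)$. Since $\alpha$ obeys the same recursion $\alpha(n)=\alpha(n-1)+\alpha(n-5)$ by Proposition \ref{propalphan}, comparing the tabulated initial values pins down the shift $\alpha(n)=f(n+3)$; with $N=d-5$ this gives $N(d)=f(d-5)=\alpha(d-8)$. Alternatively one can verify directly that $N(d)$ satisfies $N(d)=N(d-1)+N(d-5)$ and check the shifted base values against $\alpha$.

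I expect the only genuinely delicate step to be the bookkeeping of the first paragraph: checking that the two theorems apply simultaneously (the gap bound $\ge 6$ dominates $\ge 4$, and $d\ge 46$ is precisely the hypothesis of Theorem \ref{thmautom}) and, above all, that imposing the normalisation $h_k=d+k+1$ together with the gap and lower-bound conditions selects each isomorphism class of the family exactly once. The remaining steps are routine: the gap substitution and stars-and-bars count, and the index arithmetic relating $f(d-5)$ to $\alpha(d-8)$, where the main risk is an off-by-one error in the shift.
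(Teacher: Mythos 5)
Your proposal is correct, and it reaches $\alpha(d-8)$ by a genuinely different route than the paper. Your first paragraph (merging the hypotheses into $h_1\ge 7$, $h_{i+1}-h_i\ge 6$, $h_k=d+k+1$, and invoking the ``if and only if'' of Theorem \ref{thmautom} to rule out over-counting) is exactly what the paper's proof leaves implicit. The difference is in the counting step: the paper exhibits a direct bijection, re-parametrizing the vector $h$ of any lattice in the family as $h=(1,2,3,4,5,s_1+5,\dots,s_{d-8}+5,\omega-5,\omega-4,\omega-3,\omega-2,\omega)$ and observing that the combined hole-conditions hold precisely when $s_1=1,s_2,\dots,s_{d-8}$ is one of the sequences counted by $\alpha(d-8)$ in Section \ref{sectalpha}; the identification is then immediate, with no computation and no need to know a closed form. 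You instead enumerate: stars-and-bars gives $\binom{d-4k-1}{k-1}$ sequences with exactly $k$ holes (your gap arithmetic $\sum_j a_j=d-5k$ is right), the sum $\sum_{m\ge 0}\binom{(d-5)-4m}{m}$ is the number of compositions of $d-5$ into parts $1$ and $5$, and matching the recursion $f(N)=f(N-1)+f(N-5)$ of Proposition \ref{propalphan} together with the initial values $f(4),\dots,f(9)=1,2,3,4,5,6=\alpha(1),\dots,\alpha(6)$ pins down the shift $\alpha(n)=f(n+3)$, whence $N(d)=f(d-5)=\alpha(d-8)$; I verified this agrees with direct enumeration for small $d$ (e.g.\ $N(10)=2=\alpha(2)$, $N(15)=8=\alpha(7)$). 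Your approach buys an explicit closed form $p_{\mathrm{family}}(d)=\sum_{k\ge 1}\binom{d-4k-1}{k-1}$ and a compositions interpretation that makes the recursion transparent, at the cost of the index bookkeeping (and the convention that infeasible $k$ with $d-5k<0$ contribute $0$); the paper's bijection is shorter and explains structurally why the answer is the sequence $\alpha$ evaluated at $d-8$, since the five forced non-holes at each end account exactly for the shift by $8$.
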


\proof{} Such a lattice $L_d(h_1,\dots)$ corresponds to a vector 
$$h=(1,2,3,4,5,s_1+5,s_2+5,\dots,\omega-6=s_{d-8}+5,\omega-5,
\omega-4,\omega-3,\omega-2,\omega)$$
where $s_1=1,s_2,\dots,s_{d-8}=\omega-11$ is a sequence contributing
to $\alpha(d-8)$. This construction is one-to-one for $d\geq 46$:
every sequence $s_1,\dots,s_{d-8}$ contributing to $\alpha(d-8)$
corresponds to a lattice satisfying the conditions of 
Theorem \ref{thmperf}  and of Theorem \ref{thmautom}.
\endproof


\section{Maximal indices for pairs of well-rounded lattices with the same minimum}\label{sectId}

A lattice $\Lambda$ is \emph{well-rounded} if its minimal vectors 
span the ambient
vector space $\Lambda\otimes_{\mathbb Z}\mathbb R$.

We denote by $I_d$ the smallest integer such that every
well-rounded $d$-dimensional sublattice $\Lambda'$ with minimal vectors
contained in the set of minimal vectors of a $d$-dimensional lattice 
$\Lambda$ has index at most $I_d$ in $\Lambda$. The integers $I_d$
(and refinements describing all possible group-structures of
the quotient group $\Lambda/\Lambda'$ in small dimensions) 
have been studied by several authors, see for example \cite{Ke}.

The maximal index $I_d$ is of course realised by a 
sublattice generated by $d$ suitable minimal vectors of a suitable 
well-rounded $d$-dimensional lattice.

Since perfect lattices are well-rounded,
the definition of $I_d$ implies immediately:

\begin{prop}\label{propIdbound}
A sublattice $\sum_{i=1}^d\mathbb Zv_d$ generated by $d$ 
linearly independent minimal elements of a perfect 
$d$-dimensional lattice $\Lambda$ is of index at most $I_d$ 
in $\Lambda$.
\end{prop}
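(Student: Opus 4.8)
The plan is to show that the sublattice $\Lambda'=\sum_{i=1}^d\mathbb Zv_i$ satisfies verbatim the hypotheses in the definition of $I_d$, so that the claimed bound is an immediate consequence of that definition. Since the $v_i$ are linearly independent, $\Lambda'$ has rank $d$ and $\Lambda'\otimes_{\mathbb Z}\mathbb R=\Lambda\otimes_{\mathbb Z}\mathbb R$; in particular $\Lambda'$ is of finite index in $\Lambda$.

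The one point that deserves attention is the behaviour of the minimum under passing to $\Lambda'$. I would argue that the minimum of $\Lambda'$ equals the minimum $m$ of $\Lambda$: the inclusion $\Lambda'\subseteq\Lambda$ forces the minimum of $\Lambda'$ to be at least $m$, while each $v_i\in\Lambda'$ already realises length $m$, being a minimal vector of $\Lambda$. Consequently the $v_i$ are minimal vectors of $\Lambda'$ as well, and as they span $\Lambda'\otimes_{\mathbb Z}\mathbb R$ the lattice $\Lambda'$ is well-rounded. The same equality of minima shows that the minimal vectors of $\Lambda'$ lie among those of $\Lambda$: any minimal vector of $\Lambda'$ is a nonzero element of $\Lambda\supseteq\Lambda'$ of length $m$, hence a minimal vector of $\Lambda$.

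With these verifications in hand, $\Lambda'$ is a well-rounded $d$-dimensional sublattice of the $d$-dimensional lattice $\Lambda$ whose minimal vectors are contained in the set of minimal vectors of $\Lambda$, which is exactly the situation controlled by $I_d$; the definition then gives $[\Lambda:\Lambda']\leq I_d$. There is no genuine obstacle, the entire content being the bookkeeping that generating a sublattice by minimal vectors neither lowers the minimum nor creates new short vectors; the only step needing a moment's care is the equality of minima, which is what guarantees that minimal vectors of $\Lambda'$ remain minimal in $\Lambda$.
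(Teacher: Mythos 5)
Your proposal is correct and matches the paper's reasoning: the paper treats the statement as an immediate consequence of the definition of $I_d$ (noting that perfect lattices are well-rounded), and your argument is exactly the unpacking of that — verifying that $\Lambda'=\sum_{i=1}^d\mathbb Z v_i$ is well-rounded with minimal vectors among those of $\Lambda$, via the equality of minima. Nothing is missing; you have simply made explicit the bookkeeping the paper leaves to the reader.
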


We give now upper bounds for $I_d$.

\subsection{An upper bound for $I_d$ from Minkowski's inequality}


\begin{prop}\label{propuppMink} 
We have
\begin{align}\label{ineqMink}
I_d\leq \left\lfloor\left(\frac{4}{\pi}
\right)^{d/2}\left(\frac{d}{2}\right)!\right\rfloor\ .
\end{align}
\end{prop}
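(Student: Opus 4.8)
The plan is to bound $I_d$ by comparing the covolume of the sublattice $\Lambda'=\sum_{i=1}^d\mathbb Z v_i$ generated by $d$ linearly independent minimal vectors against the covolume of $\Lambda$ itself, using the fact that the index equals the ratio of covolumes. Concretely, $[\Lambda:\Lambda']=\mathrm{vol}(\Lambda')/\mathrm{vol}(\Lambda)$, so it suffices to produce an upper bound for $\mathrm{vol}(\Lambda')$ and a lower bound for $\mathrm{vol}(\Lambda)$, both purely in terms of the minimum $m$ and the dimension $d$. Since the index is a ratio, we may normalise the minimum to any convenient value; I expect the two $m$-dependences to cancel, leaving a pure function of $d$.

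First I would lower-bound $\mathrm{vol}(\Lambda)$ using the packing interpretation of a lattice: translates of an open ball of radius $\sqrt m/2$ (where $m$ is the minimum) centred at lattice points are disjoint, so the ball volume is at most the covolume. Writing the volume of a $d$-dimensional Euclidean ball of radius $r$ as $\frac{\pi^{d/2}}{(d/2)!}r^d$, this gives $\mathrm{vol}(\Lambda)\geq \frac{\pi^{d/2}}{(d/2)!}\left(\frac{\sqrt m}{2}\right)^d$, which is exactly a form of Minkowski's convex body / lattice-packing inequality. Second I would upper-bound $\mathrm{vol}(\Lambda')=\mathrm{vol}(\sum\mathbb Z v_i)$ by Hadamard's inequality applied to the Gram matrix of the $v_i$: since each $v_i$ has squared length $m$, the determinant of the Gram matrix is at most $\prod_i\langle v_i,v_i\rangle=m^d$, so $\mathrm{vol}(\Lambda')=\sqrt{\det\mathrm{Gram}}\leq m^{d/2}$.

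Combining the two bounds, the index satisfies
\begin{align*}
I_d\leq [\Lambda:\Lambda']=\frac{\mathrm{vol}(\Lambda')}{\mathrm{vol}(\Lambda)}
\leq \frac{m^{d/2}}{\frac{\pi^{d/2}}{(d/2)!}(\sqrt m/2)^d}
=\left(\frac{4}{\pi}\right)^{d/2}\left(\frac{d}{2}\right)!\ ,
\end{align*}
and since the index is an integer we may take the floor, yielding the claimed inequality (\ref{ineqMink}). The powers of $m$ cancel as anticipated, so no normalisation is even needed. The only genuine subtlety is ensuring that the two classical inequalities are invoked in mutually compatible normalisations: Hadamard gives an \emph{upper} bound on the covolume of the minimal-vector sublattice while the packing argument gives a \emph{lower} bound on the covolume of the full lattice, and it is the opposing directions of these two estimates that make the ratio collapse to a clean closed form. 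I expect the main (and only) obstacle to be stating the ball-volume formula and the packing bound precisely; both Hadamard's inequality and Minkowski's packing bound are entirely standard, so the argument is short.
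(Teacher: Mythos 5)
Your proof is correct and takes essentially the same route as the paper: the paper also bounds the index as the ratio of covolumes, combining the Hadamard-type bound $V'\leq 1$ (after normalising the minimum to $1$) with Minkowski's convex-body inequality applied to the unit ball, which is exactly the packing bound in your formulation. The only cosmetic difference is that the paper normalises the minimum to $1$ at the outset, whereas you carry a general minimum $m$ and let the powers of $m$ cancel.
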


The proof of Proposition \ref{propuppMink} 
uses Minkowski's inequality (see Chapter 3 of \cite{Mi}) 
stating that a centrally symmetric convex subset $\mathcal C=-\mathcal C$
of $\mathbb R^d$ 
has volume at most $2^d$ if it contains no non-zero elements of $\mathbb Z^d$
in its interior.

\proof[Proof of Proposition \ref{propuppMink}]
Without loss of generality, we can consider a sublattice 
$\Lambda'=\sum_{i=1}^d\mathbb Zb_i$ generated by 
$d$ linearly independent minimal vectors $b_1,\dots,b_d$ of norm $1$ 
of a $d$-dimensional well-rounded lattice $\Lambda$ with minimum $1$.
We have the inequality 
$V'\leq 1$ for the volume $V'=\mathrm{vol}(\mathbb R^d/\Lambda')$
of a fundamental domain for $\Lambda'$. (The equality $V'=1$ 
holds if and only if $b_1,\dots,b_d$ is an orthonormal basis.
The lattice $\Lambda'$ is then 
the standard lattice $\mathbb Z^d$.)

Applying Minkowski's inequality 
$\mathrm{vol}(\mathbb B^d)\leq 2^dV$
where $V=\mathrm{vol}(\mathbb R^d/\Lambda)$ to the $d$-dimensional
Euclidean unit ball $\mathbb B^d$ of volume $\frac{\pi^{d/2}}{(d/2)!}$
(see for example \cite{CS}, Formula (17) of Chapter 1)
we get the inequality
$$V'\leq 1=\frac{(d/2)!}{\pi^{d/2}}\mathrm{vol}(\mathbb B^d)\leq \frac{(d/2)!}{\pi^{d/2}}2^dV$$
showing $\frac{V'}{V}\leq \left(\frac{4}{\pi}
\right)^{d/2}\left(\frac{d}{2}\right)!$ for the index $V'/V\in\mathbb N$ 
of the lattice
$\Lambda'$ in $\Lambda$.
\endproof

\subsection{Upper bounds for $I_d$ in terms of Hermite's constants}

I thank Jacques Martinet for drawing my attention to the ideas of 
this section which is essentially
identical with Section 2 of \cite{Ke}. The scope of the following lines
is to provide the lazy reader with an armchair.

We denote by $\min(\Lambda)=\min_{v\in\Lambda\setminus\{\mathbf{0}\}}
\langle v,v\rangle$ the (squared Euclidean) norm of a shortest non-zero element
in a $d$-dimensional Euclidean lattice $\Lambda$. The \emph{determinant}
$\det(\Lambda)$ of $\Lambda$ is the determinant of a Gram matrix
(with coefficients $\langle b_i,b_j\rangle$ for a basis $b_1,\dots,b_d$
of $\Lambda=\sum_{i=1}^d\mathbb Z b_i$) for $\Lambda$. It equals the 
squared volume $\mathrm{Vol}(\mathbb R^d/\Lambda)^2$
of the flat $d$-dimensional torus $\mathbb R^d/\Lambda$. The 
Hermite invariant of $\Lambda$ is given by $\gamma(\Lambda)=
\frac{\min(\Lambda)}{\det(\Lambda)^{1/d}}$. It is invariant under rescalings 
and its $d/2$-th power is proportional to the packing-density of $\Lambda$.
Hermite's constant $\gamma_d=\max_{\Lambda,\dim(\Lambda)=d}\gamma(\Lambda)$
is the Hermite constant of a densest $d$-dimensional 
lattice-packing. Equivalently, $\gamma_d$ is equal to four times the squared
maximal injectivity radius of a suitable flat $d$-dimensional torus with
volume $1$.

We have now (see Proposition 2.1 of \cite{Ke}):

\begin{prop} We have 
$$I_d\leq \lfloor \gamma_d^{d/2}\rfloor\ .$$
\end{prop}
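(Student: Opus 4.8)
The plan is to relate the index $V'/V$ to a ratio of determinants and then invoke the definition of Hermite's constant to control this ratio. First I would normalise as in the proof of Proposition \ref{propuppMink}: consider a sublattice $\Lambda'=\sum_{i=1}^d\mathbb Z b_i$ generated by $d$ linearly independent minimal vectors of a well-rounded $d$-dimensional lattice $\Lambda$, rescaled so that $\min(\Lambda)=1$. The index of $\Lambda'$ in $\Lambda$ equals $V'/V=\det(\Lambda')^{1/2}/\det(\Lambda)^{1/2}$, the ratio of the volumes of the respective fundamental domains, so the whole problem reduces to bounding $\det(\Lambda')/\det(\Lambda)$.

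The key observation is that both lattices have the \emph{same} minimum, namely $1$: the vectors $b_1,\dots,b_d$ are minimal vectors of $\Lambda$, hence of norm $1$, and since $\Lambda'\subset\Lambda$ a shortest vector of $\Lambda'$ cannot be shorter than $1$, while $b_1$ realises the value $1$, so $\min(\Lambda')=\min(\Lambda)=1$. Now I would apply the definition of the Hermite invariant to each lattice. From $\gamma(\Lambda)=\min(\Lambda)/\det(\Lambda)^{1/d}\leq\gamma_d$ we obtain $\det(\Lambda)^{1/d}\geq 1/\gamma_d$, i.e. $\det(\Lambda)\geq\gamma_d^{-d}$. On the other hand $\det(\Lambda')\leq 1$, since $\Lambda'$ is generated by unit vectors (Hadamard's inequality, or equivalently the fact that a fundamental parallelepiped spanned by unit vectors has volume at most $1$).

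Combining these two estimates gives
\begin{align*}
\left(\frac{V'}{V}\right)^2=\frac{\det(\Lambda')}{\det(\Lambda)}\leq\frac{1}{\gamma_d^{-d}}=\gamma_d^{d},
\end{align*}
so that $V'/V\leq\gamma_d^{d/2}$. Since the index $V'/V$ is a positive integer, it is bounded by $\lfloor\gamma_d^{d/2}\rfloor$. Finally, Proposition \ref{propIdbound} (together with the remark that $I_d$ is realised by such a sublattice of a well-rounded lattice) shows that this bound for an arbitrary pair $\Lambda'\subset\Lambda$ of the prescribed type is exactly a bound for $I_d$, completing the proof.

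I do not expect any genuine obstacle here: the argument is a direct two-sided comparison via the Hermite invariant. The only point requiring a little care is making sure the minima of $\Lambda$ and $\Lambda'$ genuinely coincide (so that the normalisation $\min=1$ applies simultaneously to both), and using the correct relationship $\det(\Lambda')\le 1$ rather than an equality; the equality case $\det(\Lambda')=1$ corresponds to the $b_i$ forming an orthonormal basis, exactly as noted in the proof of Proposition \ref{propuppMink}.
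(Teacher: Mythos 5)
Your proof is correct and follows essentially the same route as the paper's: normalise so that $\min(\Lambda)=1$, bound $\det(\Lambda')\leq 1$ since $\Lambda'$ is generated by unit vectors, bound $\det(\Lambda)\geq\gamma_d^{-d}$ via the Hermite constant, and combine through $[\Lambda:\Lambda']^2=\det(\Lambda')/\det(\Lambda)$ before passing to a pair realising $I_d$. The only cosmetic difference is your appeal to Proposition \ref{propIdbound} at the end, where the paper simply invokes a suitable extremal pair $\Lambda'\subset\Lambda$ achieving the integer $I_d$.
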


\proof{} We consider a sublattice 
$\Lambda'=\sum_{i=1}^d\mathbb Zb_i$ generated by 
$d$ linearly independent minimal vectors $b_1,\dots,b_d$ of minimum $1$ 
in a $d$-dimensional well-rounded lattice $\Lambda$ with minimum $1$.
We have thus $[\Lambda:\Lambda']^2\det(\Lambda)=\det(\Lambda')$.
Since $\Lambda'$ is generated by vectors of norm $1$ we have 
$\det(\Lambda')\leq 1$ showing $[\Lambda:\Lambda']^2\det(\Lambda)\leq 1$.
Since $\Lambda$ has minimum $1$ we have $\gamma(\Lambda)=\frac{1}{\det(\Lambda)^{1/d}}\leq \gamma_d$ implying $\det(\Lambda)\geq \gamma_d^{-d}$. We have thus
$[\Lambda:\Lambda']\leq \gamma_d^{d/2}$ and we get the inequality for $I_d$
by considering a suitable pair $\Lambda'\subset \Lambda$ achieving 
the natural integer $I_d$.\endproof

The upper bound $\gamma_d\leq (4/3)^{(d-1)/2}$ (see for example page 36 
of \cite{K}) gives the very bad upper bound
$I_d\leq (4/3)^{d(d-1)/4}$.

Blichfeldt's upper bound
\begin{align}\label{blichbdgamma}
\gamma_d&\leq \frac{2}{\pi}\Gamma(2+d/2)^{2/d}
\end{align} (see for example page 42 of \cite{K})
is more interesting and leads to 
\begin{align}\label{blichbdgamma}
I_d&\leq \left(\frac{2}{\pi}\right)^{d/2}\Gamma(2+d/2)
\end{align}
which is roughly $2^{1+d/2}/d$ times better than (\ref{ineqMink}). 
The proof of (\ref{ineqMink}) is however much more elementary
than the proof of (\ref{blichbdgamma}).

The first few values of the upper bounds (\ref{ineqMink}) and
(\ref{blichbdgamma}) are
$$\begin{array}{c||cccccccccc}
d&1&2&3&4&5&6&7&8&9&10\\
\hline
(\ref{ineqMink})&1&1&1&3&6&12&27&63&155&401\\
(\ref{blichbdgamma})&1&1&1&2&3&6&10&19&37&75\\
I_d&1&1&1&2&2&4&8&16&16&?
\end{array}\ .$$
The last row contains the correct
values $I_1,\dots,I_9$ which follow from Theorem 1.1 in \cite{Ke}.

\section{Enumerating pairs of lattices}

We denote by $\sigma_d(N)$ the
number of distinct lattices $\Lambda$ containing
$\mathbb Z^d$ as a sublattice of index $N$. Considering dual lattices,
we see that the number $\sigma_d(N)$ is also equal to the 
number of distinct sublattices of index $N$ in $\mathbb Z^d$.

\begin{lem} \label{lemsdn} We have 
$$\sigma_d(N)\leq N^d$$
for the number $\sigma_d(N)$ of lattices containing $\mathbb Z^d$ as
a sublattice of index $N$.
\end{lem}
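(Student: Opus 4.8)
The plan is to count sublattices of index $N$ in $\mathbb{Z}^d$ (the same quantity as $\sigma_d(N)$, by the duality already noted) and to induct on the dimension $d$. The base case $d=1$ is trivial: the only sublattice of index $N$ in $\mathbb{Z}$ is $N\mathbb{Z}$, so $\sigma_1(N)=1\le N$.

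For the inductive step I would set up a recursion by projecting away one coordinate. Let $q:\mathbb{Z}^d\to\mathbb{Z}^{d-1}$ forget the first coordinate, so $\ker q=\mathbb{Z}e_1$. For a sublattice $\Lambda'$ of index $N$, the image $L=q(\Lambda')$ is a sublattice of $\mathbb{Z}^{d-1}$ of some index $m$, while $\Lambda'\cap\ker q=c\mathbb{Z}e_1$ with $mc=N$. Conversely, fixing $L$ (of index $m$) and $c=N/m$, a lattice $\Lambda'$ producing this data is obtained by lifting a basis $f_1,\dots,f_{d-1}$ of $L$ to vectors $(t_i,f_i)\in\Lambda'$ and adjoining $ce_1$. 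Each lift $t_i$ matters only modulo $c$ (two lifts differ by an element of $\Lambda'$ exactly when $c\mid t_i'-t_i$), and linear independence of the $f_i$ forces $\Lambda'\cap\ker q=c\mathbb{Z}e_1$ exactly; hence there are precisely $c^{d-1}=(N/m)^{d-1}$ such lattices. This yields the recursion $\sigma_d(N)=\sum_{a\mid N}a^{d-1}\sigma_{d-1}(N/a)$.

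The induction then closes cleanly. Assuming $\sigma_{d-1}(M)\le M^{d-1}$ for all $M$, we get $\sigma_d(N)=\sum_{a\mid N}a^{d-1}\sigma_{d-1}(N/a)\le\sum_{a\mid N}a^{d-1}(N/a)^{d-1}=\tau(N)\,N^{d-1}$, where $\tau(N)$ denotes the number of divisors of $N$; since every divisor lies in $\{1,\dots,N\}$ we have $\tau(N)\le N$, giving $\sigma_d(N)\le N^d$.

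The main obstacle is choosing the \emph{right} recursion. The more obvious move—projecting onto a single coordinate and tracking $\pi(\Lambda')=a\mathbb{Z}$—produces instead $\sigma_d(N)=\sum_{b\mid N}b\,\sigma_{d-1}(b)$, whose induction bound $\sum_{b\mid N}b^{d}$ already exceeds $N^d$. One must instead track the \emph{image} lattice under projection to $\mathbb{Z}^{d-1}$, so that the kernel contributes the large power $c^{d-1}$ and the weight is transferred onto the factor $N/a$; this is exactly what makes the two $(d-1)$-st powers cancel. Equivalently, one may invoke the Hermite normal form: each index-$N$ sublattice has a unique upper-triangular basis matrix with positive diagonal $a_1\cdots a_d=N$ and off-diagonal column entries reduced modulo the diagonal, giving $\sigma_d(N)=\sum_{a_1\cdots a_d=N}\prod_j a_j^{j-1}$, from which the same recursion and bound follow.
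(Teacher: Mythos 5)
Your proof is correct, but it proceeds quite differently from the paper's. The paper keeps the dimension $d$ fixed and works with the prime factorization of $N$: every overlattice $\Lambda\supset\mathbb Z^d$ of index $N$ sits atop a chain $\mathbb Z^d=\Lambda_0\subset\Lambda_1\subset\dots\subset\Lambda_k=\Lambda$ with cyclic quotients of prime orders $p_1\le\dots\le p_k$ (so $N=\prod_i p_i$), and since a lattice has exactly $(p^d-1)/(p-1)\le p^d$ overlattices of prime index $p$, counting chains gives $\sigma_d(N)\le\prod_i\frac{p_i^d-1}{p_i-1}\le N^d$. You instead fix $N$, pass to sublattices via the duality the paper records just before the lemma, and induct on $d$ through the projection recursion $\sigma_d(N)=\sum_{a\mid N}a^{d-1}\sigma_{d-1}(N/a)$ (equivalently, Hermite normal form), closing with $\tau(N)N^{d-1}\le N^d$. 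Both routes are sound and each yields a sharper intermediate bound than $N^d$ (the paper's $\prod_i(p_i^d-1)/(p_i-1)$, your $\tau(N)N^{d-1}$). Your recursion is an exact identity whose only loss is the final step $\tau(N)\le N$, and it produces as a by-product the exact enumeration $\sigma_d(N)=\sum_{a_1\cdots a_d=N}\prod_j a_j^{j-1}$, which is a Hermite-normal-form counterpart of the Gaussian-binomial formula the paper quotes in the remark following the lemma; the paper's chain argument, by contrast, is shorter, needs no duality or recursion bookkeeping, and its overcounting of chains is absorbed harmlessly into the inequality. Your observation that the naive coordinate-projection recursion $\sum_{b\mid N}b\,\sigma_{d-1}(b)$ fails to close the induction is accurate and worth keeping: it explains why one must track the image lattice in $\mathbb Z^{d-1}$ rather than the image of a single coordinate.
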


\proof{} Every such lattice $\Lambda$ contains sublattices
$\Lambda_0=\mathbb Z^d\subset \Lambda_1\subset\dots\subset \Lambda_k=\Lambda$
with $\Lambda_i/\Lambda_{i-1}$ cyclic of prime order $p_i$
such that $p_1\leq p_2\leq \dots \leq p_k$ are all
prime-divisors of $N
=\prod_{i=1}^kp_i$, with multiplicities taken into account. 
Since $\mathbb Z^d$ is contained with prime index $p$ in exactly 
$(p^d-1)/(p-1)$ overlattices, we have
$\sigma_d(N)\leq \prod_{i=1}^k\frac{p_i^d-1}{p_i-1}\leq \prod_{i=1}^kp_i^d=N^d$.
\endproof

\begin{rem}
A nice exact formula for the number $\sigma_d(N)$ of subgroups of index $N$ in $\mathbb Z^d$ is given for example in \cite{Z} or \cite{G} and 
equals
\begin{align}\label{formexactsigma}
\sigma_d(N)&=\prod_{p\vert N}\left(\begin{array}{cc}e_p+d-1\\d-1
\end{array}\right)_p
\end{align}
where 
$\prod_{p\vert N}p^{e_p}=N$ is the factorization of $N$ into prime-powers
and where
$$\left(\begin{array}{cc}e_p+d-1\\d-1
\end{array}\right)_p=\prod_{j=1}^{d-1}\frac{p^{e_p+j}-1}{p^j-1}$$
is the evaluation of the $q$-binomial
$$\left[\begin{array}{cc}e_p+d-1\\d-1
\end{array}\right]_q=\frac{[e_p+d-1]_q!}{[e_p]_q!\ [d-1]_q!}$$
(with $[k]_q!=\prod_{j=1}^k\frac{q^j-1}{q-1}$) at the prime-divisor $p$
of $N$.

Lemma \ref{lemsdn} follows of course also from (\ref{formexactsigma}) 
and from the inequality $$\left(\begin{array}{cc}k+d-1\\d-1
\end{array}\right)_q\leq q^{kd}$$
which holds by induction on $k\in \mathbb N$ for $d\geq 1,\ q\geq 2$
(and which is asymptotically exact for $k=1$ since
$\left(\begin{array}{cc}d\\d-1
\end{array}\right)_2=\frac{2^d-1}{2-1}$).
\end{rem}

\begin{prop}\label{propmajperfslatt} 
A $d$-dimensional lattice $\Lambda$ is contained with index $\leq h$ in
at most $h^{d+1}$ different $d$-dimensional overlattices.
\end{prop}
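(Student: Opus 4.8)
The plan is to count the overlattices $\Lambda \supseteq \Lambda'$ of index at most $h$ by summing over the possible exact indices $N$ and invoking Lemma \ref{lemsdn}. For each fixed index $N \leq h$, the number of lattices containing $\Lambda'$ as a sublattice of index $N$ is exactly $\sigma_d(N)$, which Lemma \ref{lemsdn} bounds by $N^d$. Since $N$ ranges over the integers $1, 2, \dots, h$, the total count is at most $\sum_{N=1}^{h} \sigma_d(N) \leq \sum_{N=1}^{h} N^d$.

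The remaining task is a crude but adequate estimate of this sum. First I would bound each summand by $h^d$, since $N \leq h$, giving $\sum_{N=1}^{h} N^d \leq h \cdot h^d = h^{d+1}$. This immediately yields the claimed bound. No subtle obstacle arises here: the essential content is entirely contained in Lemma \ref{lemsdn}, and the passage from counting at a fixed index to counting all indices $\leq h$ is a one-line summation.

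The only point deserving minor care is whether the index $h$ is assumed to be a positive integer (so that the sum over $N$ makes sense) and whether the trivial overlattice $\Lambda' = \Lambda$ (index $N=1$) is to be included; in either reading the bound $h^{d+1}$ comfortably absorbs the $\sigma_d(1) = 1$ contribution. If $h$ is permitted to be a non-integer real number, one replaces the upper summation limit by $\lfloor h \rfloor$ and the estimate $\sum_{N=1}^{\lfloor h \rfloor} N^d \leq \lfloor h \rfloor \cdot \lfloor h \rfloor^d \leq h^{d+1}$ still holds. Thus the proposition follows directly, with the inequality $\sigma_d(N) \leq N^d$ of Lemma \ref{lemsdn} doing all the work and the final summation being the only additional, entirely routine, step.

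I do not anticipate any genuine difficulty: the statement is essentially a packaging of Lemma \ref{lemsdn} together with the observation that there are at most $h$ admissible values of the exact index. If a sharper constant were desired one could instead use the exact formula (\ref{formexactsigma}) and estimate the resulting $q$-binomial sum, but for the stated bound $h^{d+1}$ the elementary argument above is both sufficient and the most transparent.
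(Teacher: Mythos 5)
Your proof is correct and coincides with the paper's own argument: the paper likewise writes the count of overlattices as $\sum_{n=1}^{h}\sigma_d(n)\leq\sum_{n=1}^{h}n^d\leq h\cdot h^d=h^{d+1}$, using Lemma \ref{lemsdn} exactly as you do. Your additional remarks about integrality of $h$ and the trivial overlattice are harmless but not needed.
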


\proof{} Lemma \ref{lemsdn} implies that the number
of such overlattices equals at most
$\sum_{n=1}^h\sigma_d(n)\leq \sum_{n=1}^hn^d\leq h\cdot h^d$. 
\endproof

\section{An upper bound for the number of perfect lattices}

\begin{thm}\label{thmbound} Up to similarities, there exist at most 
$$I_d^{d+1}{3^dI_d\choose {d\choose 2}}$$
$d$-dimensional perfect lattices
where $I_d$ is as in Section \ref{sectId}.
\end{thm}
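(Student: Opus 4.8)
The plan is to count perfect lattices by encoding each one via two pieces of data: a sublattice generated by a small number of minimal vectors, and the position of the perfect lattice as an overlattice of that sublattice. Since a perfect $d$-dimensional lattice $\Lambda$ is well-rounded, Proposition \ref{propIdbound} guarantees that any $d$ linearly independent minimal vectors generate a sublattice $\Lambda'$ of index at most $I_d$ in $\Lambda$. The first step is therefore to bound the number of possible sublattices $\Lambda'$, viewed up to similarity (equivalently, up to the $\mathrm{GL}_d(\mathbb Z)$-action together with rescaling). A sublattice generated by $d$ minimal vectors is determined by its Gram matrix relative to those generators; after normalising the minimum to a fixed value, the crucial observation is that a perfect lattice has at most $3^d I_d$ minimal vectors (up to sign), and the Gram data of $\Lambda'$ together with the ${d\choose 2}$ extra minimal vectors needed to witness perfection can be recorded combinatorially.

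More precisely, I would argue as follows. Perfection of $\Lambda$ means that $\{v\otimes v\}_{v\in\Lambda_{\min}}$ spans the ${d+1\choose 2}$-dimensional space of symmetric tensors; by the linear-algebra fact recalled in Section 2, one can select a perfect subset of exactly ${d+1\choose 2}$ minimal vectors. Fix first a basis of minimal vectors $b_1,\dots,b_d$ generating $\Lambda'$ of index $\leq I_d$ in $\Lambda$. Relative to the lattice $\Lambda'$, every minimal vector of $\Lambda$ is a fractional combination of the $b_i$ with denominator dividing $[\Lambda:\Lambda']\leq I_d$; more to the point, writing minimal vectors in terms of a reduced basis shows that their coordinates are controlled, yielding the bound of at most $3^d I_d$ minimal vectors (the factor $3^d$ accounting for small integer coordinates in each of $d$ directions, the factor $I_d$ for the index). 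This is the combinatorial heart: the count $3^d I_d$ bounds $|\Lambda_{\min}|/2$ for a perfect lattice expressed in these coordinates.

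The second step assembles the count. I would stratify perfect lattices by the isomorphism type of a generating sublattice $\Lambda'$. By Proposition \ref{propmajperfslatt}, each fixed $\Lambda'$ sits inside at most $I_d^{d+1}$ overlattices of index $\leq I_d$, which bounds the number of possible $\Lambda$ once $\Lambda'$ is fixed — this gives the factor $I_d^{d+1}$. To bound the number of relevant sublattices $\Lambda'$ themselves, I would observe that $\Lambda'$ is determined up to isomorphism once we know which of the at most $3^d I_d$ candidate minimal vectors of $\Lambda$ form a perfect (hence basis-extending) configuration: we must choose the requisite ${d\choose 2}$ additional minimal vectors from among the at most $3^d I_d$ available to complete the $d$ basis vectors to a perfect set of ${d+1\choose 2}$ elements. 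This selection contributes the binomial factor ${3^dI_d\choose {d\choose 2}}$, and the similarity class of $\Lambda$ is recovered from this finite combinatorial datum together with the overlattice choice.

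The main obstacle I anticipate is making rigorous the claim that a perfect lattice has at most $3^d I_d$ minimal vectors \emph{in a way that matches exactly the stated bound}, and that the similarity class is genuinely recovered from the chosen combinatorial data. The subtlety is that the minimal vectors must be expressed in a coordinate system tied to $\Lambda'$ (not to $\Lambda$) so that their number is bounded uniformly, while simultaneously the Gram matrix of the chosen perfect configuration must determine the lattice up to similarity. Perfection is precisely what rescues us here: by the discussion in Section 2, the quadratic form is completely determined by its values on a perfect set, so specifying the combinatorial incidence data (which ${d\choose 2}$ of the at most $3^dI_d$ vectors are selected, together with the integral overlattice structure) pins down the similarity class. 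Verifying that every perfect $d$-dimensional lattice arises from at least one such datum, and that the product $I_d^{d+1}\binom{3^dI_d}{\binom{d}{2}}$ therefore dominates the count, is the step requiring the most care.
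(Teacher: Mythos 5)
Your overall assembly matches the paper's proof: the factor $I_d^{d+1}$ comes from Proposition \ref{propmajperfslatt} applied to overlattices of $\Lambda'=\sum_{i=1}^d\mathbb Z v_i$ of index at most $I_d$; the binomial factor comes from choosing the ${d\choose 2}$ extra minimal vectors completing $v_1,\dots,v_d$ to a perfect set out of a pool of at most $3^dI_d$ lattice points; and perfection pins down the quadratic form up to a scalar once its value is prescribed on those ${d+1\choose 2}$ vectors. But there is a genuine gap exactly where you flag your ``main obstacle'': your justification of the $3^dI_d$ bound does not work as stated. You argue that minimal vectors are fractional combinations of the $v_i$ with denominator dividing $[\Lambda:\Lambda']$, and that ``writing minimal vectors in terms of a reduced basis shows that their coordinates are controlled.'' Bounded denominators control nothing about the size of the coordinates: for an \emph{arbitrary} choice of $d$ linearly independent minimal vectors, other minimal vectors can have coordinates of absolute value much larger than $1$, and no reduction step is available to you, since the $v_i$ are required to be minimal vectors of $\Lambda$ (they cannot be replaced by a reduced basis of $\Lambda'$ without losing that property).

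The missing idea is the paper's Lemma \ref{lemfond}: choose $v_1,\dots,v_d$ not arbitrarily, but so that the index $I=[\Lambda:\Lambda']$ is \emph{maximal} among all sublattices generated by $d$ linearly independent minimal vectors (this maximum exists and is at most $I_d$ by Proposition \ref{propIdbound}). Maximality yields the coordinate bound by an exchange argument: if some $w\in\Lambda_{\min}$ had $w=\sum_i\beta_iv_i$ with $\vert\beta_j\vert>1$, then replacing $v_j$ by $w$ would give $d$ linearly independent minimal vectors generating a sublattice of index $\vert\beta_j\vert\, I>I$, a contradiction. Hence every minimal vector lies in the box $\mathcal C=\{\sum_i x_iv_i\ \vert\ x_i\in[-1,1]\}$, and since each residue class of $\Lambda$ modulo $\Lambda'$ meets $\mathcal C$ in at most $3^d$ points, $\vert\mathcal C\cap\Lambda\vert\leq 3^dI\leq 3^dI_d$. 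This is what makes the candidate pool finite of the right size; with this lemma inserted, the rest of your count goes through essentially as you describe it.
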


Theorem \ref{thmbound} becomes effective after replacing $I_d$
for example with the upper bound (\ref{ineqMink}) or
(\ref{blichbdgamma}).

The main ingredient for proving Theorem \ref{thmbound} is
the following easy observation which is of independent interest:

\begin{lem}\label{lemfond}
Let $v_1,\dots,v_d\in\Lambda_{\min}$ be 
$d$ linearly independent minimal elements in
a $d$-dimensional perfect lattice $\Lambda$ such that the index $I$ of the
sublattice $\Lambda'=\sum_{i=1}^d \mathbb Zv_i$ in $\Lambda$ is maximal.

If $\mathcal C=
\{w\in\Lambda\otimes_{\mathbb Z}\mathbb R\ \vert\ 
w=\sum_{i=1}^dx_iv_i,\ x_i\in[-1,1]\}$ denotes 
the centrally symmetric $d$-dimensional parallelogram of all elements
having coordinates in $[-1,1]$ with respect to the basis $v_1,\dots,v_d$,
then all minimal vectors of $\Lambda$ belong to $\mathcal C$.
\end{lem}

Observe that the convex set $\mathcal C$ of Lemma \ref{lemfond} is simply
the $1$-ball of the $\parallel\quad \parallel_\infty$-norm
$\parallel \sum_{i=1}^d x_iv_i\parallel_\infty=\max_i\vert x_i\vert$ with
respect to the basis $v_1,\dots,v_d$.

\proof[Proof of of Lemma \ref{lemfond}]
Otherwise there exists $w\in\Lambda_{\min}$ such that
$w=\sum_{i=1}^d\beta_iv_i$ with $\vert\beta_j\vert>1$
for some index $j\in\{1,\dots,d\}$.
Exchanging $v_j$ with $w$ leads then to a sublattice (generated 
by the $d$ linearly independent minimal vectors 
$v_1,\dots,\hat{v_j},\dots,v_d,w$) of index strictly 
larger than $I$ in $\Lambda$.
\endproof

\proof[Proof of Theorem \ref{thmbound}]
Given a perfect $d$-dimensional lattice $\Lambda$, we choose
a set $v_1,\dots,v_d$ of $d$ linearly independent minimal elements
satisfying the condition of Lemma \ref{lemfond}. 
We denote by $I$ the index of the sublattice 
$\Lambda'=\sum_{i=1}^d \mathbb Zv_i$ 
of maximal index $I$ in $\Lambda$.
We consider the convex set
$\mathcal C=
\{w\in\Lambda\otimes_{\mathbb Z}\mathbb R\ \vert\ 
w=\sum_{i=1}^dx_iv_i,\ x_i\in[-1,1]\}$ containing 
$\Lambda_{\min}$ according to Lemma \ref{lemfond}.

We can now extend $v_1,\dots,v_d$ to a perfect 
subset $v_1,\dots,v_d,v_{d+1},\dots,v_{d+1\choose 2}$ of $\Lambda_{\min}\cap
\mathcal C$. Since every element of $\Lambda$ has at most
$3^d$ representatives modulo $\Lambda'$ in $\mathcal C$, the ${d\choose 2}$
elements $v_{d+1},\dots,v_{d+1\choose 2}$ belong to the finite subset 
$\mathcal C\cap \Lambda$
containing at most $3^dI$ elements. There are thus at most
${3^dI\choose{d\choose 2}}$ possibilities for the euclidean 
structure of $\Lambda$
(which is determined up to a scalar by the ${d+1\choose 2}$ minimal
vectors of the perfect set $v_1,\dots,v_{d+1\choose 2}$).

Proposition \ref{propmajperfslatt} gives the upper bound $I_d^{d+1}$
for the number of all overlattices $\Lambda$ containing
$\sum_{i=1}^d\mathbb Zv_i$ as a sublattice of index at most $I_d$.

This yields the upper bound $I_d^{d+1}{3^dI_d\choose {d\choose 2}}$ for
the number of different perfect $d$-dimensional lattices  (up to 
similarity).
\endproof

\begin{rem}\label{remimprbd} Minimal vectors of a perfect lattice $\Lambda$ do in 
general generate a perfect sublattice of $\Lambda$. 
This does not invalidate Theorem \ref{thmbound}.
The aim of the vectors $v_1,\dots,v_{d+1\choose 2}$
is only to pin down the Euclidean metric. The lattice $\Lambda$
is determined by one of the $I_d^{d+1}$ possible choices 
of a suitable overlattice 
of $\sum_{i=1}^d \mathbb Zv_i$.

The reader should also be aware that $\mathcal C$ does in general not contain
the Euclidean unit ball defined by the perfect set $v_1,\dots,v_{d+1\choose 2}$
of $\Lambda$. 

The bound of Theorem \ref{thmbound} can be improved to 
$$I_d^{d+1}{\lfloor(3^dI_d-(2d+1))/2\rfloor\choose {d\choose 2}}\ .$$
It is indeed enough to choose ${d\choose 2}$ suitable pairs of opposite
elements in $(\mathcal C\cap \Lambda)\setminus\{{\mathbf 0},\pm v_1,\dots,\pm v_d\}$ which contains at most $\lfloor(3^dI_d-(2d+1))/2\rfloor$ such pairs.
\end{rem}

\section{Digression: Symmetric lattice polytopes}\label{sectlattpol}

A lattice polytope is the convex hull of a finite number of lattice-points
in $\mathbb Z^d$. Its dimension is the dimension of its interior.
We call such a polytope $P$ symmetric if $P=-P$.
The group $\mathrm{GL}_d(\mathbb Z)$ acts on the set of $d$-dimensional 
symmetric lattice polytopes. A slight variation of the proof of Theorem 
\ref{thmbound} shows the following result:

\begin{thm}\label{thmuppbdpolyt} There are at most 
$$(d!)^{d+1}2^{3^d d!}$$
different $\mathrm{GL}_d(\mathbb Z)$-orbits of $d$-dimensional 
symmetric lattice polytopes containing no non-zero elements of $\mathbb Z^d$
in their interior.
\end{thm}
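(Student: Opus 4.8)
The plan is to mimic the proof of Theorem \ref{thmbound}, replacing the role of ``perfect lattice'' with ``symmetric lattice polytope containing no interior lattice point'' and the role of ``set of minimal vectors'' with ``set of vertices of the polytope''. First I would fix a $d$-dimensional symmetric lattice polytope $P=-P$ with no non-zero lattice point in its interior. Among the finitely many sublattices $\Lambda'=\sum_{i=1}^d\mathbb Z v_i$ of $\mathbb Z^d$ generated by $d$ linearly independent vertices of $P$, I would choose one of maximal index $I$. The analogue of Lemma \ref{lemfond} should hold: every vertex $w$ of $P$ lies in the box $\mathcal C=\{\sum x_iv_i\ \vert\ x_i\in[-1,1]\}$, since a vertex with some coordinate of absolute value $>1$ could be swapped for one of the $v_j$ to produce a generating set of strictly larger index. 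Here the maximal index $I$ plays the role that $I_d$ played before, but now I need an absolute bound on $I$ that does not reference $I_d$.

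The key new input is that, by Minkowski's theorem applied to the open box $\mathrm{int}(\mathcal C)$ (which is centrally symmetric and contains no non-zero lattice point other than possibly the $v_i$-faces, but crucially no interior ones), or more directly by the fact that $P$ itself contains no non-zero interior lattice point, one gets $\mathrm{vol}(P)\le 2^d$ and hence $\mathrm{vol}(\mathcal C)\le 2^d d!$ bounds the index; concretely, $\det(\Lambda')=\mathrm{vol}(\mathbb R^d/\Lambda')$ is the index $I$ (since $\Lambda'\subset\mathbb Z^d$ has covolume $I$), and the simplex or box spanned by the $v_i$ sits inside the symmetric convex body, forcing $I\le d!$. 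This is exactly the source of the factor $d!$ in the statement: it replaces $I_d$, and the factor $3^d$ again counts representatives of a lattice element modulo $\Lambda'$ inside $\mathcal C$. I would verify that $\mathcal C\cap\mathbb Z^d$ has at most $3^dI\le 3^d d!$ elements, exactly as in the proof of Theorem \ref{thmbound}.

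With these pieces in hand, the counting is parallel. The overlattice $\Lambda'\subset\mathbb Z^d$ of index $\le d!$ can be recovered in at most $(d!)^{d+1}$ ways by Proposition \ref{propmajperfslatt} (applied with $h=d!$). The polytope $P$ is the convex hull of a symmetric subset of $\mathcal C\cap\mathbb Z^d$; since $\vert\mathcal C\cap\mathbb Z^d\vert\le 3^d d!$, the number of such subsets is at most $2^{3^d d!}$. Multiplying the two counts yields the stated bound $(d!)^{d+1}2^{3^d d!}$, and the choices are made $\mathrm{GL}_d(\mathbb Z)$-equivariantly so the count is on orbits. I would phrase the final assembly in one display:
\begin{align*}
(d!)^{d+1}\cdot 2^{3^d d!}.
\end{align*}

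The main obstacle I expect is the careful justification of the bound $I\le d!$ and, relatedly, of the analogue of Lemma \ref{lemfond} in this polytope setting. In the lattice case, minimality of the $v_i$ (all of the same norm) drove both the swap argument and the index bound via Hermite's constant / Minkowski; here I must instead exploit that $P$ has no interior lattice point to control the volume of the fundamental domain of $\Lambda'$, and I must make sure the ``swap'' of a vertex with large coordinate genuinely increases the index without leaving the lattice $\mathbb Z^d$. The rest is a routine transcription of the earlier counting argument, so I would keep the write-up brief, emphasising only where the combinatorial body $P$ forces the substitution $I_d\rightsquigarrow d!$ and why $2^{3^d d!}$ replaces the binomial coefficient (we now allow an arbitrary symmetric subset of $\mathcal C\cap\mathbb Z^d$ as the vertex set, rather than selecting exactly $\binom{d}{2}$ further vectors).
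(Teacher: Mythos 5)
Your overall strategy is exactly the paper's: pick $d$ linearly independent vertices $v_1,\dots,v_d$ of $P$ spanning a sublattice $\Lambda'\subset\mathbb Z^d$ of maximal index $I$, use the swap argument (the analogue of Lemma \ref{lemfond}) to confine all vertices of $P$ to the box $\mathcal C$, bound $I$ by $d!$ via Minkowski, and then multiply the count of at most $(d!)^{d+1}$ overlattices of $\Lambda'$ by the $2^{3^dd!}$ choices of a vertex set inside $\mathcal C\cap\mathbb Z^d$. The paper packages the index bound as Proposition \ref{prophollow}: the sublattice $\Lambda'$ is \emph{hollow}, and Minkowski's inequality applied to the cross-polytope $\mathrm{conv}(\pm v_1,\dots,\pm v_d)$, which has volume $\frac{2^d}{d!}I$ and no non-zero lattice point in its interior, gives $I\le d!$.

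However, at precisely the step you flagged as the main obstacle, neither of your two justifications of $I\le d!$ works as stated. First, Minkowski cannot be applied to the open box $\mathrm{int}(\mathcal C)$: its volume is $2^dI$, and whenever $I>1$ it \emph{must} contain non-zero lattice points in its interior (this is the contrapositive of Minkowski); your parenthetical claim that it has none would force the absurd conclusion $I\le 1$ in all cases. Second, in your ``more direct'' route the bodies you name are the wrong ones: the box spanned by the $v_i$ is not contained in $P$ at all, while the simplex $\mathrm{conv}(\mathbf 0,v_1,\dots,v_d)$ is contained in $P$ but has volume only $I/d!$, so $\mathrm{vol}(P)\le 2^d$ yields merely $I\le 2^dd!$, which would degrade the final bound to $(2^dd!)^{d+1}2^{3^d2^dd!}$. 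The body you need is the cross-polytope $\mathrm{conv}(\pm v_1,\dots,\pm v_d)$: it is centrally symmetric, contained in $P$ (so its interior, being open and contained in $P$, lies in the interior of $P$ and therefore contains no non-zero lattice point), and has volume $\frac{2^d}{d!}I$, whence $\frac{2^d}{d!}I\le 2^d$, i.e. $I\le d!$. With this one substitution your argument coincides with the paper's proof; everything else in your write-up (the swap lemma, the $3^dI$ count of representatives, Proposition \ref{propmajperfslatt} with $h=d!$, and the replacement of the binomial coefficient by an arbitrary symmetric subset, giving $2^{3^dd!}$) is correct and matches the paper.
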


Theorem \ref{thmuppbdpolyt} gives also an upper bound on the number
of perfect lattices since convex hulls of minimal vectors of perfect 
lattices are obviously symmetric lattice polytopes containing no interior
non-zero lattice points (non-similar perfect $d$-dimensional 
lattices give obviously rise to polytopes in different 
$\mathrm{GL}_d(\mathbb Z)$-orbits). 
The bound of  Theorem \ref{thmuppbdpolyt} is
of course much worse than the bound given by Theorem \ref{thmbound}.

\subsection{The maximal index of hollow sublattices}

A sublattice $\Lambda'$ of a $d$-dimensional lattice
$\Lambda$ is called \emph{hollow} if $\Lambda'$ is generated
by $d$ linearly independent elements
$v_1,\dots,v_d$ of $\Lambda$ such that 
the interior of the convex hull spanned by $\pm v_1,\dots,\pm v_d$ contains 
no non-zero elements of $\Lambda$.

Hollowness is defined only 
in terms of convexity and is independent of metric properties.

We denote $H_d$ the maximal index of a hollow sublattice of 
a $d$-dimensional lattice. 

\begin{prop}\label{prophollow} We have 
$H_d\leq d!$.
\end{prop}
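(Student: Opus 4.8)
The plan is to bound the volume of the convex hull $K = \mathrm{conv}(\pm v_1,\dots,\pm v_d)$ from below, using the fact that $\Lambda'$ is hollow, and to relate that volume to the index $[\Lambda:\Lambda']$. The key geometric input is Minkowski's theorem in the form already available in the excerpt: a centrally symmetric convex body whose interior contains no non-zero lattice point of $\Lambda$ has volume at most $2^d \det(\Lambda)^{1/2}$ (the volume of a fundamental domain for $\Lambda$). Since $K$ is centrally symmetric and, by the hollowness hypothesis, its interior contains no non-zero element of $\Lambda$, this gives the upper bound $\mathrm{vol}(K) \le 2^d \det(\Lambda)^{1/2}$.

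The complementary step is a lower bound for $\mathrm{vol}(K)$ in terms of $\det(\Lambda')$. The body $K$ is the cross-polytope spanned by $v_1,\dots,v_d$, i.e.\ the image of the standard cross-polytope $\{x : \sum_i |x_i| \le 1\}$ under the linear map sending the standard basis to $v_1,\dots,v_d$. The standard cross-polytope has volume $2^d/d!$, so $\mathrm{vol}(K) = \frac{2^d}{d!}\,|\det(v_1,\dots,v_d)| = \frac{2^d}{d!}\det(\Lambda')^{1/2}$, where I am writing $\det(\Lambda') = \det(v_1,\dots,v_d)^2$ for the (squared-volume) determinant of $\Lambda'$.

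Combining the two inequalities, $\frac{2^d}{d!}\det(\Lambda')^{1/2} = \mathrm{vol}(K) \le 2^d \det(\Lambda)^{1/2}$, which simplifies to $\det(\Lambda')^{1/2} \le d!\,\det(\Lambda)^{1/2}$. Since $[\Lambda:\Lambda']^2 \det(\Lambda) = \det(\Lambda')$, we have $[\Lambda:\Lambda'] = \det(\Lambda')^{1/2}/\det(\Lambda)^{1/2} \le d!$, and taking the maximum over all hollow sublattices yields $H_d \le d!$.

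I expect the only genuine subtlety to be the volume formula for the cross-polytope $K$. The identity $\mathrm{vol}(\{x:\sum|x_i|\le 1\}) = 2^d/d!$ is standard (the body decomposes into $2^d$ orthant-simplices, each a unimodular image of the standard simplex of volume $1/d!$), and the volume scales by the absolute value of the determinant of the linear map $e_i \mapsto v_i$; these are routine and I would state them without dwelling on them. A minor point worth flagging is that $K$ is exactly the convex hull of $\pm v_1,\dots,\pm v_d$ appearing in the definition of hollowness, so the hollowness hypothesis applies to $K$ verbatim, and the Minkowski bound applies because $K$ is convex, compact, and centrally symmetric.
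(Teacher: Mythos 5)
Your proof is correct and follows essentially the same route as the paper: bound the volume of the cross-polytope $\mathrm{conv}(\pm v_1,\dots,\pm v_d)$ above by Minkowski's theorem (using hollowness) and compute it exactly as $\frac{2^d}{d!}$ times the covolume of $\Lambda'$. The only cosmetic difference is that the paper first normalizes to $\Lambda=\mathbb Z^d$ so that Minkowski's inequality reads $\mathrm{vol}(\mathcal C)\leq 2^d$, whereas you carry the factor $\det(\Lambda)^{1/2}$ through explicitly; the content is identical.
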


\proof[Proof of Proposition \ref{prophollow}] 
We can work without loss of generality with $\Lambda=\mathbb Z^d\subset
\mathbb R^d$. We denote by $\mathcal C$ the convex hull of generators
$\pm v_1,\dots,\pm v_d$ (satisfying the condition of hollowness) of 
a hollow sublattice $\Lambda'=\sum_{i=1}^d \mathbb Z v_i$ of $\mathbb Z^d$. The volume
$\mathrm{vol}(\mathcal C)$ of $\mathcal C$ equals $\frac{2^d}{d!}I$ where
$I$ is the index of $\Lambda'$ in $\mathbb Z^d$. Minkowski's inequality
$\mathrm{vol}(\mathcal C)\leq 2^d$ implies the result.
\endproof

\begin{rem}
Since sublattices generated by $d$ linearly independent minimal vectors
of a $d$-dimensional well-rounded lattice are always hollow, we 
have $I_d\leq H_d$ giving the bad upper bound $I_d\leq d!$
for the integers $I_d$ introduced in Section \ref{sectId}.
\end{rem}

\subsection{Proof of Theorem \ref{thmuppbdpolyt}}

\proof{} We consider such a lattice polytope $P$ with vertices in $\mathbb Z^d$.
We choose a set $v_1,\dots,v_d$ of $d$ linearly independent vertices of 
$P$ generating a sublattice $\Lambda'=\sum_{i=1}^d\mathbb Z v_i$
of maximal index $I$ in $\mathbb Z^d$. An obvious analogue of 
Lemma \ref{lemfond} holds and shows that all vertices of $P$ 
are contained in $\mathcal C=\{w\in\mathbb R^d\ \vert\ w=
\sum_{i=1}^d x_i v_i, x_i\in [-1,1]\}$. Since $\mathcal C$ intersects
$\mathbb Z^d$ in at most $3^dI$ elements there are at most 
$2^{3^dI}$ possibilities for choosing the vertices of $P$. 
Since $I\leq H_d\leq d!$ by  
Proposition \ref{prophollow} and since there at at most $H_d^{d+1}
\leq (d!)^{d+1}$ possibilities (see Lemma \ref{lemsdn})
for superlattices containing $\Lambda'$ with index at most $H_d\leq d!$,
we get the result.
\endproof

\begin{rem} The upper bound in Theorem \ref{thmuppbdpolyt} can easily 
be improved to $(d!)^{d+1}2^{\lfloor (3^d d!-(2d+1))/2\rfloor}$,
see Remark \ref{remimprbd}.
\end{rem}

\subsection{Lattice-polytopes defined by short vectors}

$d$-dimensional polytopes defined as convex hulls of minimal vectors in
Euclidean lattices define of course symmetric lattice polytopes 
with no non-zero lattice-points in their interior. The following result
is a slight generalization of this construction:

\begin{prop}\label{propconstrlattpol} (i) Let $\Lambda$ be a $d$-dimensional lattice of minimum $1$.
We have $\mathring{P}\cap \Lambda=\{{\mathbf 0}\}$ if $\mathring{P}$ is
the interior of a $d$-dimensional polytope $P$ defined as the convex hull of a 
set $\mathcal V=-\mathcal V
\subset \Lambda\setminus\{{\mathbf 0}\}$ of pairs of opposite non-zero 
elements of squared Euclidean
length $\leq 2$ in $\Lambda$.

(ii) We have moreover $P\cap \Lambda=\mathcal V\cup \{{\mathbf 0}\}$
and $\mathcal V$ is the set of vertices of $P$ if the inequality is
strict (ie. if all elements of $\mathcal V$ are of squared Euclidean 
length strictly smaller than $2$).
\end{prop}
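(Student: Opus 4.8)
The plan is to prove both parts by a direct geometric argument resting on the fact that for a lattice of minimum $1$, no non-zero lattice point can lie strictly inside the ball of squared radius $2$ except the short vectors themselves, combined with convexity. Let me set up the key inequality first. Suppose $w\in\Lambda\setminus\{\mathbf 0\}$ lies in $P$, so $w=\sum_k t_k u_k$ is a convex combination of elements $u_k\in\mathcal V$ with $t_k\geq 0$ and $\sum_k t_k=1$. The main estimate is that $\langle w,w\rangle\leq 2$: indeed, by convexity of the squared-norm function (or directly by Cauchy--Schwarz applied to the convex combination) one gets $\langle w,w\rangle=\sum_{j,k}t_jt_k\langle u_j,u_k\rangle\leq\sum_{j,k}t_jt_k\sqrt{\langle u_j,u_j\rangle}\sqrt{\langle u_k,u_k\rangle}\leq\left(\sum_k t_k\sqrt{\langle u_k,u_k\rangle}\right)^2\leq 2$, using $\langle u_k,u_k\rangle\leq 2$ for all $k$. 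So every lattice point of $P$ has squared length at most $2$.

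For part (i) I then need to show that a lattice point $w$ in the interior $\mathring P$ must be $\mathbf 0$. The point is that interior points satisfy the convexity estimate with strict inequality unless $w$ is extremal. Concretely, if $w\neq\mathbf 0$ lies in $\mathring P$, then since $P=-P$ is symmetric, $-w$ also lies in $\mathring P$, and the open segment from $-w$ to $w$ passes through $\mathbf 0$; more usefully, because $w$ is interior, a small dilate $(1+\delta)w$ still lies in $P$, hence has squared length at most $2$ by the estimate above, i.e. $(1+\delta)^2\langle w,w\rangle\leq 2$. Letting this hold for some $\delta>0$ forces $\langle w,w\rangle<2$, and then one observes that $w$ being a non-zero lattice point of a minimum-$1$ lattice already has $\langle w,w\rangle\geq 1$; the real content is ruling out $1\leq\langle w,w\rangle<2$ for an interior point. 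The cleanest route is: an interior point $w$ admits $(1+\delta)w\in P$, so $(1+\delta)w$ is a convex combination of the $u_k$, and applying the norm estimate to $(1+\delta)w$ gives $(1+\delta)^2\langle w,w\rangle\leq 2$; since this must hold for the specific $\delta$ coming from interiority and $\langle w,w\rangle\geq 1$, one derives a contradiction unless $w=\mathbf 0$, because the estimate is in fact strict whenever the convex combination is non-trivial and mixes vectors that are not all positive multiples of a common unit vector.

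For part (ii), under the strict hypothesis $\langle u_k,u_k\rangle<2$ for all $k$, the inequality chain above becomes strict at the last step whenever $\sum_k t_k\sqrt{\langle u_k,u_k\rangle}$ involves any $u_k$ (which it always does for a convex combination), giving $\langle w,w\rangle<2$ for every $w\in P\cap\Lambda$. Hence every lattice point of $P$ has squared length strictly below $2$; together with minimum $1$ this leaves only squared length exactly $1$ or the origin, and I claim the length-$1$ points are exactly $\mathcal V$. To finish I would argue that a point of $P\cap\Lambda$ of squared length $<2$ lying in $P$ must be a vertex: if it were a non-trivial convex combination of the $u_k$, the strict Cauchy--Schwarz estimate shows its length strictly drops below the maximum of the $\sqrt{\langle u_k,u_k\rangle}$, so it cannot coincide with any $u_k$ unless the combination is trivial, and any other lattice point in the interior is excluded by part (i). This identifies $P\cap\Lambda=\mathcal V\cup\{\mathbf 0\}$ and pins down $\mathcal V$ as the vertex set.

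The main obstacle I anticipate is the borderline case $\langle w,w\rangle=2$ in part (i): when equality is allowed, the norm estimate is no longer automatically strict, and one must use interiority (the dilation trick $(1+\delta)w\in\mathring P\subset P$) rather than the bare inequality to force the contradiction. Getting the equality analysis in Cauchy--Schwarz exactly right --- namely that equality $\langle w,w\rangle=2$ together with $w$ being a lattice point and an interior point is impossible --- is the delicate step, and I would handle it by exploiting that a strictly interior lattice point can be scaled outward while staying in $P$, which is incompatible with the uniform bound $\langle\,\cdot\,,\cdot\,\rangle\leq 2$ forced by the minimum and convexity.
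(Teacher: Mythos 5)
There is a genuine gap, and it sits exactly at the step you yourself flag as ``the real content''. Your convexity/Cauchy--Schwarz estimate only shows that every point of $P$ (in particular every lattice point of $P$) has squared norm at most $2$, and the dilation trick $(1+\delta)w\in P$ improves this to $\langle w,w\rangle<2$ for interior points. But this is compatible with a non-zero lattice point of squared norm in $[1,2)$ sitting in $\mathring{P}$: nothing in your argument produces a contradiction in that range. The sentence claiming that ``the estimate is in fact strict whenever the convex combination is non-trivial'' only sharpens the bound $\leq 2$ to $<2$; it never bridges the gap from $<2$ down to $<1$, which is what is actually needed (and your parenthetical ``this leaves only squared length exactly $1$ or the origin'' is also incorrect --- minimum $1$ leaves the whole interval $[1,2)$ available). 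The same defect invalidates part (ii): a non-trivial convex combination of elements of $\mathcal V$ of norm close to $\sqrt 2$ could perfectly well be a lattice point of squared norm $1$ as far as your estimates are concerned, so you have not shown $P\cap\Lambda\subset\mathcal V\cup\{\mathbf 0\}$.

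The missing idea is to use the lattice structure a second time, not on $w$ alone but on the differences $w-u_k$. This is what the paper does: given a lattice point $u\in P\setminus\mathcal V$, place it in a simplex $\Sigma$ spanned by $\mathbf 0$ and vertices $v_1,\dots,v_d\in\mathcal V$ of $P$ (cone a triangulation of $\partial P$ over the origin); since each $u-v_i$ is a non-zero lattice vector, $u$ lies in the region $\sigma\subset\Sigma$ of points at squared distance $\geq 1$ from all $v_i$. Maximising the norm over $\sigma$, one finds a maximiser $w$ with $\langle w-v_i,w-v_i\rangle=1$ and $\langle w,v_i-w\rangle\geq 0$ for some $i$, whence
\begin{align*}
\langle v_i,v_i\rangle&=\langle w,w\rangle+2\langle w,v_i-w\rangle+\langle v_i-w,v_i-w\rangle\geq \langle w,w\rangle+1,
\end{align*}
so $\langle u,u\rangle\leq\langle w,w\rangle\leq\langle v_i,v_i\rangle-1\leq 1$, with equality forcing $u\in\partial P$. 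This bound of $1$ (not $2$) is what kills interior lattice points, and under the strict hypothesis of (ii) it becomes $\langle u,u\rangle<1$, forcing $u=\mathbf 0$ and yielding $P\cap\Lambda=\mathcal V\cup\{\mathbf 0\}$. Your outline, which stays entirely at the level of convexity of the norm, cannot reach either conclusion.
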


Taking $\mathcal V=\{\pm b_i,\pm b_i\pm b_j\}$ for $b_1,\dots,b_d$ 
an orthogonal basis of the standard lattice $\mathbb Z^d$ shows 
that the bound $2$ is sharp in part (ii).

\begin{proof} Suppose that $u\not\in \mathcal V$
belongs to $P\cap \Lambda$.
There exists thus $d$ elements $v_1,\dots,v_d\in \mathcal V$ such
that 
$u$ is contained in the simplex $\Sigma$ spanned by the origin 
${\mathbf 0}$ and by $v_1,\dots,v_d$.
Since $\Lambda$ has minimum $1$, the element $u$ 
belongs to the subset $\sigma\subset \Sigma$ of all elements 
of $\Sigma$ which are at distance $\geq 1$ from $v_1,\dots,v_d$.
The norm of $u$ is thus at most equal to an element $w$ of maximal 
norm in $\sigma$. For such an element $w$ there exists an index $i$ 
such that $w$ is at distance exactly $1$ from $v_i$ and such that 
$\langle w,v_i-w\rangle\geq 0$. This implies that $w$ is of norm at most
$1$ with equality if and only if $w$ is on the boundary $\partial P$ of $P$.
\end{proof}

\begin{rem} The constant $2$ in part (i) of Proposition \ref{propconstrlattpol}
is perhaps not optimal. The sequence $c_1=4,c_2,\dots$ 
of upper bounds $c_d$ on the set of 
all possible constants for assertion (i)
of Proposition \ref{propconstrlattpol})
is decreasing to a limit $\geq 2$.
\end{rem}

\section{Proof of Theorem \ref{thmmain}}

\proof{}
Proposition \ref{propauxexp}
counts the number $\alpha(d-8)$ of $d$-dimensional 
lattices $L_d(h_1,\dots)$ satisfying the conditions 
of Theorems \ref{thmperf}  and \ref{thmautom}.
All these lattices are perfect by Theorem \ref{thmperf} and 
non-isomorphic (and thus also non-similar
since they have the same minimum) by Theorem \ref{thmautom}.
Their number $\alpha(d-8)$ grows exponentially fast by 
Proposition \ref{propalphan}. This implies eventually $p_d>e^{e^{d-\epsilon}}$.

The eventual upper bound on $p_d$ follows from Theorem 
\ref{thmbound}.
\endproof

\section{Upper bounds for cells}

Similarity classes of
$d$-dimension perfect lattices correspond to cells of maximal dimension of
the Vorono\"{\i} complex, 
a finite ${d+1\choose 2}$-dimensional cellular complex encoding
information on $\mathrm{GL}_d(\mathbb Z)$. 

The proof of Theorem \ref{thmbound} can easily be modified to show
that this complex has at most $$I_d^{d+1}{3^dI_d\choose k+1}$$ 
(with $I_d$ replaced by the upper bound (\ref{ineqMink}) or
(\ref{blichbdgamma})) different cells
of dimension $k$. (Cells are in general defined by affine subspaces. 
This explains the necessity of choosing $k+1$ elements.) Slight 
improvements of this result are possible by choosing carefully the 
initial $d$ linearly independent minimal elements $v_1,\dots,v_d$.

Without error on my behalf,
these bounds are somewhat smaller than the bounds given in Proposition 2
of \cite{S}.

\section{Algorithmic aspects}

The proof of Theorem \ref{thmbound} can be made into a naive algorithm
as follows:

For a fixed dimension $d$, consider the list $\mathcal L$
of all overlattices $\Lambda$ of $\mathbb Z^d$ such that $\Lambda/\mathbb Z^d$
has at most $I_d$ elements.

Given a lattice $\Lambda\in\mathcal L$, 
construct the finite set $\mathcal C=\Lambda\cap [-1,1]^d$.
For every subset $\mathcal S$ of ${d\choose 2}$ elements in $\mathcal C$,
check if $\mathcal S\cup\{b_1,\dots,b_d\}$ (with $b_1,\dots,b_d$
denoting the standard basis of $\mathbb Z^d$)
is perfect. If this is the case, check if the quadratic form
$q$ defined by $q(v_i)=1$ for $i=1,\dots,{d+1\choose 2}$ is positive definite. 

Check finally that the lattice $\Lambda$ (endowed with this quadratic
form) has no non-zero elements of length shorter than $1$ and 
add the resulting perfect lattice to your list $\mathcal P$ of
perfect $d$-dimensional lattices if this is the case and if 
$\mathcal P$ does lack it (up to similarity).

This algorithm can be accelerated using the following facts:

\begin{enumerate}
\item{} It is enough to consider overlattices $\Lambda$ containing 
$b_1,\dots,b_d$ as a hollow set.
We can reduce the list $\mathcal L$ of 
overlattices by taking only one overlattice in each orbit 
of the group $\mathcal S_d\ltimes \{\pm 1\}^d$ acting linearly 
(by coordinate permutations and sign changes) on the set
$\pm b_1,\dots,\pm b_d$.

\item{} The list $\mathcal C$ can be made smaller: First of all,
its size can be divided by $2$ by considering only elements 
with strictly positive first non-zero coordinate.
Secondly, an element $v$ can only lead to a perfect set
if the convex hull of $\pm b_1,\dots,\pm b_d,\pm v$
intersects $\Lambda\setminus\{0\}$ only in its vertices.
This idea can be refined since the same property holds
for the convex hull of $\pm b_1,\dots,b_d,\pm v_{d+1},\dots,
\pm v_{d+k}$ for the subset $b_1,\dots,b_d,v_{d+1},\dots,v_{d+k}$
of the first $d+k$ elements of the perfect set 
$\{b_1,\dots,b_d\}\cup\{v_{d+1},\dots,v_{d+1\choose 2}\}$.

\item{} Writing $v_i=\sum_{j=1}^i\alpha_{i,j}b_j$
we get a ${d+1\choose 2}\times d$ matrix whose minors
are all in $[-1,1]$ (otherwise we get a contradiction 
with maximality of the index $I$ of the sublattice
$\mathbb Z^d$ generated by 
$v_1=b_1,\dots,v_d=b_d$).
\end{enumerate} 

\section{Improving $I_d$?}

The upper bounds (\ref{ineqMink}) and
(\ref{blichbdgamma}) for $I_d$ are not tight.
They can however not be improved too much as shown below.

First of all, let us observe that the maximal index $I$
(of sublattices generated by $d$ linearly independent minimal vectors of 
a perfect lattice) can be as small as $1$:

\begin{prop}\label{propAd} Every set of $d$ linearly independent roots of the root lattice
$A_d$ generates $A_d$.
\end{prop}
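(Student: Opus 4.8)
The plan is to use the standard realisation of $A_d$ as the zero-sum lattice $\{x\in\mathbb Z^{d+1}\ \vert\ \sum_{i=1}^{d+1}x_i=0\}$ inside $\mathbb Z^{d+1}$, whose roots (minimal vectors of squared norm $2$) are exactly the $d(d+1)$ vectors $e_i-e_j$ with $i\neq j$, where $e_1,\dots,e_{d+1}$ is the standard basis. In this model each root $e_i-e_j$ corresponds to an edge between vertices $i$ and $j$ of the complete graph on $\{1,\dots,d+1\}$. The first step is to translate linear independence into graph theory: the signed edge-vectors around any cycle telescope to $\mathbf 0$, so a set of roots whose underlying edges contain a cycle is linearly dependent, whereas an acyclic set of edges yields linearly independent roots (this is the standard description of the cycle space as the kernel of the incidence map). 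Hence $d$ linearly independent roots correspond to an acyclic graph with $d$ edges on $d+1$ vertices, and the forest identity $(\text{edges})=(\text{vertices})-(\text{components})$ forces a single component. The chosen roots are therefore the edges of a \emph{spanning tree} of the complete graph.

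It then remains to prove that the edge-vectors of any spanning tree generate all of $A_d$, and I would do this by induction on $d$. In the base case $d=1$ a single root generates $A_1=\mathbb Z(e_1-e_2)$. For the inductive step I would choose a leaf $v$ of the tree; its unique incident edge gives a root $r=e_v-e_w$ (up to sign). Deleting $v$ leaves a spanning tree on the remaining $d$ vertices, whose $d-1$ edge-vectors generate, by the induction hypothesis, the root lattice $A'=\{x\in\mathbb Z^{d+1}\ \vert\ x_v=0,\ \sum_i x_i=0\}$ supported away from $v$.

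Finally I would glue the two pieces together. Given an arbitrary $x\in A_d$ with $v$-coordinate $x_v$, the vector $x-x_v\,r$ still has zero coordinate-sum and now also has vanishing $v$-coordinate, so it lies in $A'$; hence $x\in A'+\mathbb Z r$, which is contained in the lattice generated by the tree. This shows that the tree generates $A_d$ and completes the induction. I expect the only genuinely substantive point to be the graph-theoretic reformulation guaranteeing that $d$ linearly independent roots really form a connected spanning tree rather than a disconnected forest; once connectedness is secured, the leaf-peeling induction is entirely routine. (One could instead invoke total unimodularity of the oriented incidence matrix of a tree, but the leaf argument is more elementary and self-contained, in keeping with the spirit of the paper.)
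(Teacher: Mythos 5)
Your proposal is correct and takes essentially the same route as the paper: the same realisation of $A_d$ as the zero-sum lattice in $\mathbb Z^{d+1}$, the same dictionary between roots $b_i-b_j$ and edges of a graph on $d+1$ vertices, and the same key observation that $d$ linearly independent roots must form a spanning tree. The only (cosmetic) difference is the finishing step — the paper telescopes edge vectors along the unique tree path joining $i$ to $j$ to recover every root of $A_d$, whereas you peel off leaves inductively to generate an arbitrary zero-sum vector directly — but both are routine once the tree structure is in hand.
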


I ignore if there exist other perfect lattices generated by any maximal
set of linearly independent minimal vectors.

\proof[Proof of Proposition \ref{propAd}] We consider the usual realisation of $A_d$ as the sublattice
of all elements in $\mathbb Z^{d+1}$ with coordinate sum $0$.
Roots are given by $b_i-b_j$ where $b_1,\dots,b_{d+1}$ is the 
standard orthogonal basis of $\mathbb Z^{d+1}$.
Let $\mathcal B=\{f_1,\dots,f_d\}$ be a set of $d$ linearly independent roots of $A_d$.
We associate to $\mathcal B$ a graph with vertices $1,\dots,d+1$.
An element $b_i-b_j\in\mathcal B$ yields an edge joining the vertices $i,j$.
The resulting graph has $d$ edges and it has to be connected (otherwise
we get a sublattice of rank $d+1-k$ in $\mathbb Z^d$ where $k$ denotes
the number of connected components). It is thus a tree
$T_{\mathcal B}$. Roots of $A_d$
correspond to the ${d+1\choose 2}$ shortest paths joining two distinct vertices
of  $T_{\mathcal B}$. It follows that $\mathcal B$
generates the root lattice $A_d$.
\endproof

The corresponding situation is fairly different for root lattice $D_{2d}$
of even dimension:

\begin{prop}\label{propDn} The maximal index of a sublattice generated by $2d$
linearly independent roots of $D_{2d}$ equals $2^{d-1}$.
\end{prop}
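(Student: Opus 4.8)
The plan is to encode a set of roots of $D_{2d}$ as a \emph{signed} graph and reduce the index computation to counting connected components, in the spirit of the proof of Proposition \ref{propAd}. Realize $D_{2d}$ as the sublattice of $\mathbb{Z}^{2d}$ (with standard orthogonal basis $b_1,\dots,b_{2d}$) of vectors of even coordinate-sum; its roots are the vectors $\pm b_i\pm b_j$ with $i<j$. To a set $\mathcal{B}$ of $2d$ linearly independent roots I associate a graph on the vertex set $\{1,\dots,2d\}$ whose edges are the supports $\{i,j\}$ of the roots in $\mathcal{B}$, each edge carrying a sign recording whether the root is, up to a global sign, of the form $b_i-b_j$ (a positive edge) or $b_i+b_j$ (a negative edge). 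A vertex sign-switching (the coordinate flip $b_i\mapsto-b_i$) is an isometry of $\mathbb{Z}^{2d}$ preserving $D_{2d}$, the roots, linear independence and all indices, and it toggles the signs of the edges at that vertex; a connected signed graph is \emph{balanced} (every cycle carries an even number of negative edges) precisely when it can be switched to an all-positive graph.

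First I would determine the shape of the signed graph forced by linear independence. Since the $2d$ root-vectors span $\mathbb{R}^{2d}$, no vertex is isolated, and because the edge-vectors of distinct connected components lie in disjoint coordinate blocks, independence is checked component by component. Writing $v_t,e_t$ for the vertex- and edge-counts of the $t$-th component, switching a balanced component to all-positive form identifies its roots with ordinary graph edges $b_i-b_j$, whose edge-vectors have rank $v_t-1$; hence a balanced component can carry at most $v_t-1$ independent edges, whereas $e_t\le v_t$ in all cases. Since $\sum_t v_t=\sum_t e_t=2d$ with each $e_t\le v_t$, equality $e_t=v_t$ holds for every $t$, which rules out the balanced case. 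Thus each component is connected, unicyclic and unbalanced.

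Next I would compute the index of a single unbalanced unicyclic component inside its coordinate block $\mathbb{Z}^{v}$. After switching a spanning tree to all-positive form, the tree roots $b_i-b_j$ generate the lattice $A_{v-1}$ of sum-zero vectors, and the unique remaining edge becomes (up to sign) $b_i+b_j$, a vector of coordinate-sum $\pm2$; since $\mathbb{Z}^{v}/A_{v-1}\cong\mathbb{Z}$ via the coordinate-sum, adjoining this vector yields the quotient $\mathbb{Z}^{v}/\Lambda'_t\cong\mathbb{Z}/2\mathbb{Z}$, so each component contributes index exactly $2$. As the components occupy disjoint coordinates, a sublattice $\Lambda'$ generated by $2d$ independent roots with $c$ components satisfies $[\mathbb{Z}^{2d}:\Lambda']=2^{c}$, and since $[\mathbb{Z}^{2d}:D_{2d}]=2$ we obtain $[D_{2d}:\Lambda']=2^{c-1}$.

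Finally I would optimize over $c$. Each component has at least two vertices (a single vertex carries no root), so $c\le d$, giving the upper bound $[D_{2d}:\Lambda']\le 2^{d-1}$. Equality is attained by the $d$ disjoint digons $\{b_{2t-1}-b_{2t},\,b_{2t-1}+b_{2t}\}_{t=1}^{d}$, each generating a copy of $D_2$ of index $2$, so the maximal index is exactly $2^{d-1}$. I expect the main obstacle to be the rank and index bookkeeping for signed graphs: justifying cleanly, via switching, that a balanced component loses one unit of rank while an unbalanced unicyclic component is full rank and contributes precisely the factor $2$. The remaining ingredients---the direct-sum decomposition over components and the elementary bound $c\le d$---are routine.
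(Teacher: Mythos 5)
Your proof is correct, and it takes a genuinely different route from the paper's. The paper's proof is two lines long: it exhibits the pairwise orthogonal roots $f_{2t-1}=b_{2t-1}+b_{2t}$, $f_{2t}=b_{2t-1}-b_{2t}$ ($1\le t\le d$), which generate a sublattice of index $2^{d-1}$, and deduces maximality directly from orthogonality; the inequality hiding behind that last phrase is Hadamard's: any $2d$ linearly independent roots $f_1,\dots,f_{2d}$ satisfy $\det\bigl(\sum_i\mathbb{Z}f_i\bigr)\le\prod_i\langle f_i,f_i\rangle=2^{2d}$, with equality precisely for orthogonal vectors, so $[D_{2d}:\Lambda']=\sqrt{\det(\Lambda')/\det(D_{2d})}\le\sqrt{2^{2d}/4}=2^{d-1}$. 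You replace this one-line metric estimate by a structural signed-graph analysis extending the graph encoding of Proposition \ref{propAd}: independence forces every component to be unbalanced unicyclic, each such component generates exactly the even-coordinate-sum lattice $D_{v_t}$ of its coordinate block (index $2$ in $\mathbb{Z}^{v_t}$), whence $[D_{2d}:\Lambda']=2^{c-1}$ with $c$ the number of components, and $c\le d$. The steps you flag as delicate (switching a spanning tree to all-positive form, the rank drop for balanced components, the index-$2$ computation via the coordinate-sum map) are all standard and sound, including for parallel edges, which in an independent set necessarily carry opposite signs and form an unbalanced digon. What the paper's route buys is brevity. What yours buys is more information: it shows the index of \emph{any} sublattice generated by $2d$ independent roots is exactly $2^{c-1}$, hence always a power of $2$; it identifies the extremal configurations (exactly the $d$ orthogonal digons, which also recovers the paper's later assertion about $D_4$); and it applies verbatim to $D_n$ with $n$ odd, where no orthogonal basis of roots exists, Hadamard's bound is no longer tight, and the paper's subsequent remark must instead invoke a classification of root subsystems.
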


\proof[Proof of Proposition \ref{propDn}] We consider 
$f_1=b_1+b_2,f_2=b_1-b_2,f_3=b_3+b_4,f_4=b_3-b_4,\dots,f_{2d-1}=b_{2d-1}+b_{2d},
f_{2d}=b_{2d-1}-b_{2d}$. These elements generate a sublattice of index $2^{d-1}$.
Since they are orthogonal, this index is maximal.
\endproof

Proposition \ref{propDn} dashes hope for big improvements on the upper
bound $I_d$: Any such improvement has necessarily exponential growth. 
We have however no candidate (of a sublattice generated by $d$
linearly minimal elements of a $d$-dimensional perfect lattice) 
such that the quotient group has elements of very large order.

\begin{rem} An analogue of Proposition \ref{propDn} holds also for
root lattices $D_{2d+1}$ but the proof is slightly more involved.
Indeed, such a lattice contains a sublattice generated by roots
of index $2^{d-1}$
corresponding to the root system $A_1^{2d-2}A_3$. For proving that
the index $2^{d-1}$ is maximal, show
first that $D_n$ cannot contain a root system of type $E$. 
It follows that $2d+1$ linearly independent roots of $D_{2d+1}$
generate a root system with connected components of type $A$ and $D$
whose ranks sum up to $2d+1$. Each such root system is possible
and easy computations show that $A_1^{2d-2}A_3$ maximises the index
among all possibilities.
\end{rem}

\section{Perfect lattices of small dimensions}

Since $I_2=I_3=1$, the classification of perfect lattices
of dimension $2$ and $3$ can easily be done by hand.

\subsection{Dimension $2$}

There is essentially (i.e. up to action of the dihedral group of isometries
of the square with vertices $\pm b_1,\pm b_2$) only one way to extend
$v_1=b_1=(1,0),v_2=b_2=(0,1)$ to a perfect set. It is given by 
choosing $v_3=b_1+b_2=(1,1)$ and leads to the root lattice $A_2$.

\subsection{Dimension $3$}

There are only three essentially different ways (up to the 
obvious action of the group $\mathcal S_3\ltimes\{\pm 1\}^3$) 
for enlarging
$v_1=b_1,v_2=b_2,v_3=b_3$ to a perfect set in $\{0,\pm 1\}^3$.

The first one, given by $v_4=b_1+b_2+b_3=(1,1,1),v_5=b_1+b_2=(1,1,0),
v_6=b_2+b_3=(0,1,1)$ 
(with Gram matrix $\frac{1}{2}\left(\begin{array}{ccc}2&-1&0\\-1&2&-1\\0&-1&2
\end{array}\right)$) leads to the root lattice $A_3$ in its standard 
realisation.

The second one, given by $v_4=b_1+b_2,v_5=b_2+b_3,v_6=b_1+b_3$
leads to the degenerate quadratic form 
$\frac{1}{2}\left(\begin{array}{ccc}2&-1&-1\\-1&2&-1\\-1&-1&2
\end{array}\right)$.

The third one, $v_4=b_1+b_2,v_5=b_2+b_3,v_6=b_1-b_3$
defines also $A_3$. It
leads to the quadratic form 
$\frac{1}{2}\left(\begin{array}{ccc}2&-1&1\\-1&2&-1\\1&-1&2
\end{array}\right)$ and corresponds to 
$$\begin{array}{crrrr}
v_1=&(1&-1&0&0)\\
v_2=&(0&1&-1&0)\\
v_3=&(0&-1&0&1)\\
v_4=&(1&0&-1&0)\\
v_5=&(0&0&-1&1)\\
v_6=&(1&0&0&-1)\\
\end{array}$$
with respect to the standard realisation 
$$A_3=\{(x_1,x_2,x_3,x_4)\in \mathbb Z^4\ \vert\ x_1+x_2+x_3+x_4=0\}$$
of the root lattice $A_3$.

\subsection{Dimension $4$}

We give no complete classification in dimension $4$ but we describe 
briefly how the two $4$-dimensional perfect lattices fit into our
framework.

In dimension $4$ we have to consider overlattices containing $\mathbb Z^4$
with index $1$ or $2$. The known classification shows that 
the root lattices $A_4$ and $D_4$ are (up to
similarities) the only perfect lattices in dimension $4$. Every 
linearly independent set of $4$ roots of $A_4$ generates $A_4$ by 
Proposition \ref{propAd}. The root lattice $A_4$ is thus obtained by 
considering (for example) the $10={5\choose 2}$ vectors
of $\{0,1\}^4$ with consecutive coefficients $1$. The first $4$ vectors 
can be chosen as the standard basis
$v_1=(1,0,0,0),v_2=(0,1,0,0),v_3=(0,0,1,0),v_4=(0,0,0,1)$.
The remaining six vectors are 
$$(1,1,0,0),(1,1,1,0),(1,1,1,1),(0,1,1,0),(0,1,1,1),(0,0,1,1)\ .$$

Proposition \ref{propDn} shows that four linearly independent roots of 
$D_4$ generate a sublattice of index at most $2$ in $D_4$. The index is 
exactly $2$ if and only if the four roots are pairwise orthogonal. 
Given four such orthogonal roots $v_1,\dots,v_4$, the remaining eight pairs of 
roots are given by $\pm \frac{1}{2}(v_1\pm v_2\pm v_3\pm v_4)$.

\section{Bases of small height for perfect lattices}

\begin{thm}\label{thmsmallheight} A $d$-dimensional perfect lattice has a $\mathbb Z$-basis
$f_1,\dots,f_d$ such that the coordinates $\alpha_i\in \mathbb Z$
(with respect to the basis $f_1,\dots,f_d$)
of a minimal element $v=\sum_{i=1}^d\beta_if_i$
satisfy the inequalities $\vert \beta_i\vert\leq 2^{i-1}I_d,\ i=1,\dots,d$
with $I_d$ as in Section \ref{sectId}.
\end{thm}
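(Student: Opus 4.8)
The plan is to start from a sublattice generated by $d$ minimal vectors of maximal index, transfer the coordinate control supplied by Lemma \ref{lemfond}, and then replace this sublattice basis by a genuine $\mathbb Z$-basis of $\Lambda$ produced by a triangular (Hermite) reduction.

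First I would fix $d$ linearly independent minimal vectors $v_1,\dots,v_d$ of the perfect lattice $\Lambda$ generating a sublattice $\Lambda'=\sum_i\mathbb Z v_i$ of \emph{maximal} index $I=[\Lambda:\Lambda']$, which satisfies $I\le I_d$ by Proposition \ref{propIdbound}. By Lemma \ref{lemfond} every minimal vector $w$ lies in the cube $\mathcal C$, that is $w=\sum_i x_i v_i$ with $|x_i|\le 1$. The whole point is that the $v_i$ form a basis of $\Lambda'$ only, not of $\Lambda$, so these bounded real coordinates must be converted into bounded integer coordinates in an honest basis of $\Lambda$.

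Next I would build such a basis adapted to the \emph{descending} flag of primitive (saturated) sublattices $\Lambda^{(i)}=\Lambda\cap(\mathbb R v_i+\dots+\mathbb R v_d)$, of ranks $d-i+1$. Since each $\Lambda^{(i+1)}$ is saturated in $\Lambda^{(i)}$, the quotient is free of rank one, so one can pick $f_i\in\Lambda^{(i)}$ generating $\Lambda^{(i)}/\Lambda^{(i+1)}$; then $f_i,\dots,f_d$ is a basis of $\Lambda^{(i)}$ and $f_1,\dots,f_d$ a basis of $\Lambda$. As $v_i\in\Lambda^{(i)}$, the transition matrix is upper triangular, $v_i=\sum_{j\ge i}M_{ij}f_j$, with positive diagonal satisfying $\prod_i M_{ii}=[\Lambda:\Lambda']=I$. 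A Hermite reduction (repeatedly replacing $f_i$ by $f_i+cf_j$ with $i<j$, which keeps $f_1,\dots,f_d$ a basis and preserves upper triangularity) then arranges $0\le M_{ij}<M_{ii}$ for all $i<j$.

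Finally I would read off the coordinates: for a minimal $w=\sum_i x_i v_i$ one gets $w=\sum_j\beta_j f_j$ with $\beta_j=\sum_{i\le j}M_{ij}x_i\in\mathbb Z$, whence $|\beta_j|\le\sum_{i\le j}|M_{ij}|\le\sum_{i\le j}M_{ii}$. Because $\prod_{i\le j}M_{ii}$ divides $I\le I_d$, and a product of $j$ integers $\ge 1$ equal to $P$ forces their sum to be at most $P+(j-1)$, this yields $|\beta_j|\le I_d+(j-1)\le 2^{j-1}I_d$, which is the asserted inequality (in fact marginally stronger). The main obstacle is the middle step: manufacturing a $\mathbb Z$-basis of the \emph{full} lattice that is simultaneously triangular and size-reduced relative to the $v_i$. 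Orienting the flag downward is what makes the estimate work, since it front-loads the reduction so that the low-index coordinates inherit the sharp bound $|\beta_1|\le M_{11}\le I_d$, while the crude exponential slack $2^{j-1}$ is absorbed harmlessly into the high-index coordinates.
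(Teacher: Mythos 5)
Your proof is correct, but it replaces the paper's key lemma by a genuinely different (and sharper) reduction. The paper proves Lemma \ref{lemsmallheight} by induction on the dimension: it peels off $v_0$ via the linear form $\pi$ with $\pi(v_0)=1$, $\pi(v_i)=0$, applies the induction hypothesis to $\ker(\pi)\cap\Lambda$ (the same descending flag $\Lambda\cap\mathrm{span}(v_i,\dots,v_d)$ that you use), and then completes the basis by a geometric choice: a vector $f_0$ with $\pi(f_0)=1/a$ lying in the fundamental parallelepiped $\sum_i[0,1]v_i$. That choice propagates the bounds multiplicatively and yields only $\vert\alpha_{i,j}\vert\le 2^{\max(0,j-i-1)}I$, hence the stated bound $2^{j-1}I_d$. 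You instead perform a standard Hermite-normal-form column reduction on the upper triangular transition matrix (replacing $f_i$ by $f_i+cf_j$, $i<j$, which indeed only alters rows $\le i$ of column $j$ and so preserves triangularity), obtaining $0\le M_{ij}<M_{ii}$ with $\prod_i M_{ii}=I$; combined with the elementary fact that $j$ positive integers of product $P$ have sum at most $P+(j-1)$, this gives $\vert\beta_j\vert\le I_d+(j-1)$ --- and with the slightly finer count $\vert M_{ij}\vert\le M_{ii}-1$ for $i<j$ one even gets $\vert\beta_j\vert\le I_d$ uniformly. Both arguments share the same skeleton (maximal-index sublattice, Proposition \ref{propIdbound}, Lemma \ref{lemfond}, triangular basis adapted to the flag); what your HNF reduction buys is a far better size-reduced basis, showing that the factor $2^{j-1}$ in Theorem \ref{thmsmallheight} is an artifact of the paper's fundamental-domain construction rather than essential. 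What the paper's construction buys is the extra geometric information noted after Lemma \ref{lemsmallheight}, namely that the basis $f_1,\dots,f_d$ can be chosen inside the fundamental domain $\sum_i[0,1]v_i$ of $\sum_i\mathbb Z v_i$, which your purely algebraic reduction does not provide.
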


The main ingredient of the proof is the following result:

\begin{lem}\label{lemsmallheight} Let $v_1,\dots,v_d$ be $d$ linearly
independent elements generating a sublattice $\sum_{i=1}^d\mathbb Zv_i$
of index $I$ in 
a $d$-dimensional lattice $\Lambda$. There exist a $\mathbb Z$-basis
$f_1,\dots,f_d$ of $\Lambda=\sum_{i=1}^d\mathbb Z f_i$ such that we have 
$v_i=\sum_{j=i}^d\alpha_{i,j}f_j$ and $\vert\alpha_{i,j}\vert\leq
2^{\max(0,j-i-1)}I$ for all $i$ and all $j\geq i$.
\end{lem}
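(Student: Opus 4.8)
The plan is to construct the basis $f_1,\dots,f_d$ by putting the matrix expressing $v_1,\dots,v_d$ in terms of a good basis of $\Lambda$ into a normal form. Since the $v_i$ are linearly independent and generate a sublattice of index $I$, I would start from the Hermite (or Smith) normal form of the integer matrix whose rows are the coordinates of the $v_i$ with respect to \emph{any} fixed $\mathbb{Z}$-basis of $\Lambda$. By choosing the basis $f_1,\dots,f_d$ of $\Lambda$ to be the one realizing the (upper-triangular) Hermite normal form, I can arrange that $v_i=\sum_{j\ge i}\alpha_{i,j}f_j$, i.e.\ the change-of-basis matrix $A=(\alpha_{i,j})$ is upper triangular. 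The diagonal entries $\alpha_{i,i}$ are positive and satisfy $\prod_{i=1}^d \alpha_{i,i}=[\Lambda:\sum\mathbb{Z}v_i]=I$; in particular each $\alpha_{i,i}\le I$.

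The substance is controlling the off-diagonal entries. After reaching upper-triangular form I would run the standard column-reduction step of the Hermite normal form: for each column $j$, reduce the entries above the diagonal pivot $\alpha_{j,j}$ modulo $\alpha_{j,j}$ by adding integer multiples of row $j$ (equivalently, changing the basis vector $f_j$). This is a sweep from the bottom-right upward. After a single reduction pass one gets $|\alpha_{i,j}|\le \alpha_{j,j}\le I$ for every off-diagonal pair, which would already give $|\alpha_{i,j}|\le I$; but because reducing one entry can disturb entries in earlier columns, the clean bound requires tracking how errors propagate. The factor $2^{\max(0,j-i-1)}$ in the statement is exactly the signature of such propagation: each additional step of separation between $i$ and $j$ can at worst double the accumulated bound.

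Concretely, I would prove the bound $|\alpha_{i,j}|\le 2^{\max(0,j-i-1)}I$ by induction on $j-i$. For $j=i$ we have $\alpha_{i,i}\le I$ (a divisor of $I$, in fact), and for $j=i+1$ the single reduction modulo the pivot gives $|\alpha_{i,i+1}|\le \alpha_{i+1,i+1}\le I=2^0 I$. For the inductive step, when I reduce $\alpha_{i,j}$ modulo $\alpha_{j,j}$ I subtract an integer multiple $q$ of row $j$ from row $i$; the multiplier $q$ is controlled by $|\alpha_{i,j}|/\alpha_{j,j}$, and this subtraction modifies the intermediate entries $\alpha_{i,m}$ for $i<m<j$ by $q\,\alpha_{j,m}$. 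Bounding these corrections using the inductive hypotheses on the shorter gaps $m-i$ and $j-m$, the worst case doubles the bound at each increment of the gap, yielding the geometric factor $2^{j-i-1}$. The ordering of the reduction passes is what I expect to be the delicate point: I would perform the elimination in increasing order of $j$ and, within fixed $j$, in decreasing order of $i$, so that each entry is reduced after the entries it depends on are already in normal form, which keeps the error propagation strictly triangular and forces the clean $2^{\max(0,j-i-1)}$ estimate.

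The main obstacle, then, is not the existence of the triangular basis (standard Hermite normal form) but the careful bookkeeping showing that the off-diagonal entries stay within the doubling bound rather than growing faster. I would isolate this as a purely arithmetic lemma about reducing an upper-triangular integer matrix with a prescribed product of diagonal entries, and verify the doubling recursion explicitly for the gap length. Once Lemma~\ref{lemsmallheight} is in hand, Theorem~\ref{thmsmallheight} follows by applying it to a maximal-index system $v_1,\dots,v_d$ of $d$ linearly independent minimal vectors of the perfect lattice $\Lambda$ (so $I\le I_d$) and then inverting the triangular relation: writing each $v_i$ back in terms of $f_1,\dots,f_d$ and solving for the coordinates of an arbitrary minimal vector $v=\sum\beta_i f_i$, the coordinate bounds $|\beta_i|\le 2^{i-1}I_d$ drop out of the triangular structure together with Lemma~\ref{lemfond}, which confines all minimal vectors to the box $\mathcal{C}$ with $\|\cdot\|_\infty$-coordinates in $[-1,1]$ relative to $v_1,\dots,v_d$.
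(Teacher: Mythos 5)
Your triangularization step is fine: a change of $\mathbb Z$-basis of $\Lambda$ acts on the coordinate matrix $A=(\alpha_{i,j})$ (rows indexed by the \emph{fixed} vectors $v_i$, columns by the basis vectors) by right multiplication by $\mathrm{GL}_d(\mathbb Z)$, i.e.\ by column operations, and column operations do suffice to make $A$ upper triangular with positive diagonal and $\prod_i\alpha_{i,i}=I$. The flaw is in your reduction step. Adding an integer multiple of row $j$ to row $i$ is \emph{not} equivalent to changing the basis vector $f_j$: a row operation replaces $v_i$ by $v_i+qv_j$, i.e.\ it changes the given vectors whose coordinates the lemma is about, and no change of basis of $\Lambda$ realizes it. Basis changes give you only column operations, and a column operation can alter $\alpha_{i,j}$ ($i<j$) only by multiples of entries of another column, the relevant one being $\alpha_{i,i}$: the legal move is $f_i\mapsto f_i+qf_j$, which reduces $\alpha_{i,j}$ modulo the \emph{row} pivot $\alpha_{i,i}$, not modulo the column pivot $\alpha_{j,j}$ as you assert. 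Your propagation bookkeeping is also wrong on its own terms: subtracting $q$ times row $j$ (supported in columns $m\geq j$ of an upper triangular matrix) from row $i$ would perturb the entries $\alpha_{i,m}$ with $m>j$, not those with $i<m<j$; and the doubling induction that is supposed to be the heart of your argument is only asserted, never carried out.

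Ironically, the legal version of your plan needs no propagation analysis at all. Fix a column $j$ and reduce $\alpha_{i,j}$ modulo $\alpha_{i,i}$ for $i=j-1,j-2,\dots,1$ (via $f_i\mapsto f_i+qf_j$); each such operation perturbs only the entries $\alpha_{k,j}$ with $k<i$, which have not yet been treated, and touches no other column. After one sweep every off-diagonal entry satisfies $0\leq\alpha_{i,j}<\alpha_{i,i}\leq I$. This is just the column Hermite normal form, and it proves a statement \emph{stronger} than Lemma \ref{lemsmallheight}: the bound $\vert\alpha_{i,j}\vert\leq I$ with no factor $2^{\max(0,j-i-1)}$ (feeding this into your last paragraph would even improve the bound of Theorem \ref{thmsmallheight} from $2^{i-1}I_d$ to $d\,I_d$). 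The factor of $2$ in the lemma is not the signature of error propagation in a normal-form computation; it comes from the paper's entirely different proof, an induction on the dimension: one splits off $v_0$ by the linear form $\pi$ with $\pi(v_0)=1$, $\pi(v_i)=0$, applies the induction hypothesis to $\ker(\pi)\cap\Lambda$ (of index $I/a$, where $\pi(\Lambda)=\frac{1}{a}\mathbb Z$), and completes the basis by a vector $f_0$ with $\pi(f_0)=\frac1a$ chosen in the fundamental domain $\sum_i[0,1]v_i$; the powers of $2$ then arise from the geometric sum $\sum_{i=1}^{j}2^{\max(0,j-i-1)}=2^{j-1}$. That construction buys an extra property your basis need not have (all $f_j$ lie in the fundamental domain of the $v_i$, as remarked after the lemma), at the price of the weaker, exponential coefficient bound.
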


The proof of Lemma \ref{lemsmallheight} shows in fact slightly more
since it constructs such a basis $f_1,\dots,f_d$ of $\Lambda$ 
in the fundamental domain $\sum_{i=1}^d[0,1]v_i$ of the 
lattice $\sum_{i=1}^d\mathbb Zv_i$.

\proof[Proof of Theorem \ref{thmsmallheight}]
We choose
minimal elements $v_1,\dots,v_d$ generating a sublattice 
$\sum_{i=1}^d \mathbb Zv_i$ of maximal index $I\leq I_d$ in a perfect
$d$-dimensional lattice $\Lambda$. Lemma \ref{lemfond}
shows that we have $v=\sum_{i=1}^d\lambda_iv_i$ with $\lambda_i\in[-1,1]$ 
for every minimal vector $v$ of $\Lambda$. With respect to a basis 
$f_1,\dots,f_d$ of $\Lambda$ as in Lemma \ref{lemsmallheight} we get
$$v=\sum_{i=1}^d\lambda_iv_i=\sum_{i=1}^d\lambda_i\sum_{j=i}^d\alpha_{i,j}f_j.$$
We have thus $v=\sum_{j=1}^d\beta_jf_j$ with 
$\beta_j=\sum_{i=1}^j\lambda_i\alpha_{i,j}$.
Since $\lambda_i\in[-1,1]$ and since $\vert\alpha_{i,j}\vert\leq 2^{\max(0,j-i-1)}I$
by Lemma \ref{lemsmallheight}, we get $\beta_j\leq 
I\sum_{i=1}^j2^{\max(0,j-i-1)}=2^{j-1}I$. We apply now the inequality $I\leq I_d$ of 
Proposition \ref{propIdbound}.
\endproof

\proof[Proof of Lemma \ref{lemsmallheight}]
The result clearly holds for $d=1$. 

Consider $d+1$ linearly independent elements $v_0,\dots,v_d$ generating a 
sublattice of index $I$ in a $(d+1)$-dimensional lattice $\Lambda$.
We denote by $\pi$ the linear form defined by $\pi(v_0)=1$ and 
$\pi(v_1)=\dots=\pi(v_d)=0$. The set $\pi(\Lambda)$ is of the form
$\frac{1}{a}\mathbb Z$ with $a$ dividing $I$ such that 
$\Lambda'=\ker(\pi)\cap \Lambda$ contains $\sum_{i=1}^d\mathbb Zv_i$ 
as a sublattice of index $I/a$. By induction, there exists 
a basis $f_1,\dots,f_d$
of $\Lambda'$ such that every element 
$v_1,\dots,v_i=\sum_{j=i}^d\alpha_{i,j}f_j,\dots,
v_d$ involves only coordinates $\alpha_{i,j}$ of absolute value at most
$2^{\max(0,j-i-1)}I/a$.
We choose now an element $f_0\in \Lambda$ with
$\pi(f_0)=\frac{1}{a}$ in the fundamental 
domain $\sum_{i=0}^d[0,1]v_i$ of $\mathbb R^{d+1}/\sum_{i=0}^d\mathbb Zv_i$.
We have thus $f_0=\sum_{i=0}^d\lambda_iv_i$ with 
$\lambda_0=\frac{1}{a}$ and $\lambda_1,\dots,\lambda_d\in[0,1]$.
We get thus
\begin{align*}
af_0&=v_0+a\sum_{i=1}^d\lambda_iv_i\\
&=v_0+a\sum_{i=1}^d\sum_{j=i}^d\lambda_i\alpha_{i,j}f_j.
\end{align*}
This shows that $v_0=\sum_{j=0}^d\beta_jf_j$ has coordinates 
$\beta_j$ given by $\beta_0=a$ and by
$$\beta_j=-a\sum_{i=1}^j\lambda_i\alpha_{i,j}$$
for $j=1,\dots,d$.
We have obviously $\vert\beta_0\vert=\vert a\vert\leq I$.
Since $\lambda_i\in[0,1]$ and since $\vert\alpha_{i,j}\vert\leq 2^{\max(0,j-i-1)}I/a$
we get $\vert\beta_j\vert\leq I\sum_{i=1}^j2^{\max(0,j-i-1)}=2^{\max(0,j-1)}I=2^{\max(0,j-0-1)}I$ for $j=1,\dots,d$.
\endproof

\section{Heuristic arguments for an improved upper bound}\label{sectionheuristics}

We present a few non-rigorous thoughts suggesting an eventual
upper bound of $e^{d^{2+\epsilon}}$ (for arbitrarily small
strictly positive $\epsilon$) for the numbers $p(d)$ of
$d$-dimensional perfect lattices, up to similarities.

Consider an increasing function $\alpha:\mathbb N\rightarrow \mathbb N$
such that $\lim_{d\rightarrow \infty}\frac{\log\log\alpha(d)}{\log d}=0$
(e.g. $\alpha(d)=\lceil d^{k(1+\log(d))}\rceil$ for some positive constant $k$).
We denote by $p_\alpha(d)$ the set of all similarity classes of 
$d$-dimensional integral perfect lattices
having a basis involving only elements of (squared euclidean) norm 
at most $\alpha(d)$. For any $\epsilon>0$, we have 
clearly $p_\alpha(d)<e^{d^{2+\epsilon}}$ for almost all $d\in\mathbb N$
since such lattices have Gram matrices in the set of all
$(1+2\alpha(d))^{d+1\choose 2}$ symmetric matrices with coefficients in 
$\{-\alpha(d),\dots,\alpha(d)\}$.
In order to have $p(d)<e^{d^{2+\epsilon}}$ for any $\epsilon>0$ and for 
any $d>N(\epsilon)$, it is now sufficient to have
\begin{align}\label{limpap}
\lim_{d\rightarrow\infty}\frac{p_\alpha(d)}{p(d)}e^{d^{2+\epsilon}}&=
\infty
\end{align}
for all $\epsilon>0$. 

In other terms, $p(d)=o\left(e^{d^{2+\epsilon}}\right)$
would imply that the proportion of similarity classes of
integral perfect lattices 
of dimension $d$ generated by, say, vectors shorter than $d^{100\log d}$
(with respect to all $p(d)$ perfect lattices) decays extremely fast.

Equivalently, we can consider the set $\tilde p_\alpha(d)$ 
of all similarity classes of 
$d$-dimensional integral perfect lattices
having $d$ linearly independent minimal elements generating a 
sublattice of index at most $\alpha$. The proof of Theorem
\ref{thmbound} shows that (\ref{limpap}) with $\tilde p_\alpha (d)$
replacing $p_\alpha(d)$ implies also the eventual inequalities
$p_d<e^{d^{2+\epsilon}}$.

{\bf Acknowledgements.} I thank A. Ash, E. Bayer, M. Decauwert, P. Elbaz-Vincent, L. Fukshansky and J. Martinet for interesting discussions,
remarks and comments.

\noindent Roland BACHER, 
 Univ. Grenoble Alpes, Institut Fourier, 
 F-38000 Grenoble, France.

\noindent e-mail: Roland.Bacher@univ-grenoble-alpes.fr

\end{document}